\theoremstyle{plain}
 \newtheorem{theorem}{Theorem}[section]
 \newtheorem*{theorem*}{Theorem}
 \newtheorem*{proposition*}{Proposition}
 \newtheorem{proposition}[theorem]{Proposition}
 \newtheorem{lemma}[theorem]{Lemma}
 \newtheorem{corollary}[theorem]{Corollary}
 \newtheorem{fact}[theorem]{Fact}
 \newtheorem{introtheorem}{Theorem}
 \newtheorem{introcorollary}[introtheorem]{Corollary}
 \newtheorem{introproposition}[introtheorem]{Proposition}
\theoremstyle{remark}
 \newtheorem{definition}[theorem]{Definition}
 \newtheorem{remark}[theorem]{Remark}
 \newtheorem*{remark*}{Remark}
 \newtheorem{example}[theorem]{Example}
 \newtheorem*{acknowledgement}{Acknowledgements}
\numberwithin{equation}{section}
\renewcommand{\theenumi}{{\rm(\arabic{enumi})}}
\renewcommand{\labelenumi}{\theenumi}
\newcommand{\Lor}{\boldsymbol{L}}
\newcommand{\Z}{\boldsymbol{Z}}
\newcommand{\Q}{\boldsymbol{Q}}
\newcommand{\R}{\boldsymbol{R}}
\newcommand{\C}{\boldsymbol{C}}
\newcommand{\E}{\mathcal{E}}
\newcommand{\SL}{\operatorname{SL}}
\newcommand{\SU}{\operatorname{SU}}
\newcommand{\U}{\operatorname{U}}
\newcommand{\PSL}{\operatorname{PSL}}
\newcommand{\Exp}{\operatorname{Exp}}
\renewcommand{\sl}{\operatorname{\mathfrak{sl}}}
\newcommand{\Herm}{\operatorname{Herm}}
\newcommand{\Cyc}{\varGamma}
\newcommand{\Horo}{\mathcal{H}}
\newcommand{\Euc}{\boldsymbol{E}}
\newcommand{\GCD}{\operatorname{\mbox{\footnotesize{GCD}}}}
\renewcommand{\Re}{\operatorname{Re}}
\renewcommand{\Im}{\operatorname{Im}}
\newcommand{\id}{\operatorname{id}}
\newcommand{\imag}{\mathrm{i}}
\newcommand{\trace}{\operatorname{trace}}
\newcommand{\ord}{\operatorname{ord}}
\newcommand{\sign}{\operatorname{sgn}}
\newcommand{\Kext}{K_{\mathrm{ext}}}
\newcommand{\secondff}{\mbox{I\!I}}
\newcommand{\inner}[2]{\left\langle{#1},{#2}\right\rangle}
\newcommand{\trans}[1]{{\vphantom{#1}}^t#1}
\newcommand{\vect}[1]{\boldsymbol{#1}}
\title[Asymptotic behavior of flat surfaces]{
  Asymptotic behavior of flat  surfaces\\
  in hyperbolic 3-space}
\thanks{
Masatoshi Kokubu, Wayne Rossman, Masaaki Umehara and Kotaro 
Yamada were supported by Grant-in-Aid for 
Scientific Research (C) No.~18540096, 
Exploratory Research No.~19654010,
Scientific Research (A) No.~19204005 
and Scientific Research (B) No.~14340024,
respectively, from the Japan Society for the Promotion of Science.
}
\author[M.~Kokubu]{Masatoshi Kokubu}
\address[Masatoshi Kokubu]{%
   Department of Mathematics,
   School of Engineering,
   Tokyo Denki University,
   2-2 Kanda-Nishiki-cho,
   Chiyoda-ku, Tokyo 101-8457,
   Japan
}
\email{kokubu@cck.dendai.ac.jp}
\author[W.~Rossman]{Wayne Rossman}
\address[Wayne Rossman]{%
   Department of Mathematics, Faculty of Science,
   Kobe University,
   Rokko, Kobe 657-8501, Japan%
}
\email{wayne@math.kobe-u.ac.jp}
\author[M.~Umehara]{Masaaki Umehara}
\address[Masaaki Umehara]{%
   Department of Mathematics, Graduate School of Science,
   Osaka University,
   Toyonaka, Osaka 560-0043,
   Japan
}
\email{umehara@math.sci.osaka-u.ac.jp}
\author[K.~Yamada]{Kotaro Yamada}
\address[Kotaro Yamada]{%
   Faculty of Mathematics,
   Kyushu University, 
   Higashi-ku, Fukuoka 812-8581, Japan%
}
\email{kotaro@math.kyushu-u.ac.jp}
\dedicatory{%
  Dedicated to Professor Seiki Nishikawa on 
  the occasion of his sixtieth birthday
}
\subjclass[2000]{Primary 53C42; Secondary 53A35}
\keywords{Flat surface, flat front, end, asymptotic behavior, 
hyperbolic 3-space}
\begin{document}
\begin{abstract}
 In this paper, we investigate the asymptotic behavior of regular
 ends of flat surfaces in the hyperbolic $3$-space $H^3$.
 G\'alvez, Mart\'\i{}nez and Mil\'an showed that when the singular set 
 does not accumulate at an end, 
 the end is asymptotic to a
 rotationally symmetric flat surface. 
 As a refinement of their result,
 we show that the asymptotic order (called {\it pitch\/} $p$) 
 of the end determines the limiting shape,
 even when the singular set does accumulate at the end.
 If the singular set is bounded away from the end, we have $-1<p\le 0$. 
 If the singular set accumulates at the end, 
 the pitch $p$ is a positive rational number
 not equal to $1$.  
 Choosing appropriate positive integers $n$ and $m$ so that $p=n/m$, 
 suitable slices of the end by horospheres are asymptotic to 
 $d$-coverings ($d$-times wrapped coverings) of epicycloids
 or $d$-coverings of hypocycloids with $2n_0$ cusps and whose normal
 directions have  winding number $m_0$, where $n=n_0d$, $m=m_0d$ 
 ($n_0$, $m_0$ are integers or half-integers) and
 $d$ is the greatest common divisor of $m-n$ and $m+n$. 
 Furthermore, it is known that the caustics of flat surfaces are also 
 flat.  
 So, as an application, we give a useful explicit formula for the pitch
 of ends of caustics of complete flat fronts.
\end{abstract}

\maketitle

\section*{Introduction}
\begingroup
\renewcommand{\theequation}{\arabic{equation}}

Let $f\colon{}D^*\to H^3$ be an immersion of the unit punctured disc
$D^*:=\{z\in \C\,;\, 0<|z|<1\}$ into the hyperbolic $3$-space $H^3$.
Then $f$ is called {\it flat\/} if the Gaussian curvature vanishes
everywhere, 
and, assuming this is the case,
we call $f$ an {\it end\/} of a flat surface.
Since any flat surface is orientable \cite{KRUY}, this is the general
setup for ``ends'' of flat surfaces.
Moreover, $f$ is called a {\it complete\/} end if $f$ is complete at the
origin $z=0$ with respect to the Riemannian metric induced by $f$.
Then the two hyperbolic Gauss maps 
$G,G_* \colon D^*\to \partial H^3=\C\cup\{\infty\}$ are defined on $D^*$
\cite{GMM}.  
If both $G(z)$ and $G_*(z)$ can be extended smoothly across $z=0$,
$f$ is called a {\it regular\/} end, and otherwise $f$ is called an 
{\it irregular\/} end.

Let $\nu$ be the unit normal vector field to $f$, and set 
\[
    f_t\colon{}D^*\ni z\longmapsto f_t(z) = \Exp_{f(z)}
        \bigl(t\nu(z)\bigr)
                \in H^3
\]
for each real number $t$,
where ``$\Exp$'' denotes the exponential map of the Riemannian manifold
$H^3$
(see \eqref{eq:parallel} in the next section for a more explicit
 description of $f_t$).
This surface $f_t$ is called a {\em parallel surface\/} of $f$.
A parallel surface $f_t$ may have singular points, 
but it will be considered here as a ({\em wave}) {\em front\/}, 
i.e., a surface which admits certain kinds of singularities 
(see \cite{GMM}, \cite{KUY2}).  
Moreover, any parallel surface $f_t$, away from singular points,
 is flat if $f$ is flat. 
It is often reasonable to begin arguments 
under the assumption that the flat surface is a front. 
When we wish to emphasize that assumption, 
we speak of it as a {\it flat front\/} instead of a flat surface.

 From now on, we assume $f\colon D^* \to H^3$ is a flat front.
Even if $f$ is a complete end, 
$f_t$ might not be complete at the origin in general, that is, 
it can happen that the singular points of $f_t$ accumulate at the origin.   
However, each $f_t$, including $f=f_0$, is weakly complete and of finite type
in the sense of \cite{KRUY} 
(see Definition~\ref{def:complete}).
Moreover, for each non-umbilic point $z\in D^*$, 
there is a unique $t(z) \in \R$ so that
$f_{t(z)}$ is not an immersion at $z$, i.e., $z$ is a singular 
point of $f_{t(z)}$. 
Then the singular locus (or equivalently, the set of focal points) 
is the image of the map
\[
  C_f\colon{} D^*\setminus\{\text{umbilic points}\}\ni
         z\longmapsto f_{t(z)}\in H^3,
\]
which is called the {\it caustic\/} (or {\it focal surface}) of $f$.
Note that caustics can be defined not only for ends but globally for
non-totally umbilic flat fronts.
Roitman \cite{R} proved that $C_f$ is flat 
(in fact, it is locally a flat front, see \cite{KRSUY} and \cite{KRUY}),
and gave a holomorphic representation formula for such caustics. 

A caustic can have more symmetry than the original surface:
Figure~\ref{fig:caustic} shows a symmetric four-noid and its
caustic.
The caustic of the four-noid as in 
Figure \ref{fig:caustic} (right) has octahedral symmetry, 
though the original surface has only dihedral symmetry.
This shows that an end of a caustic coming from an umbilic point 
of the original surface can be
congruent to another of the caustic's ends coming from an end of the
original surface.

\begin{figure}
\begin{center}
 \begin{tabular}{cc}
  \includegraphics[width=0.45\textwidth]{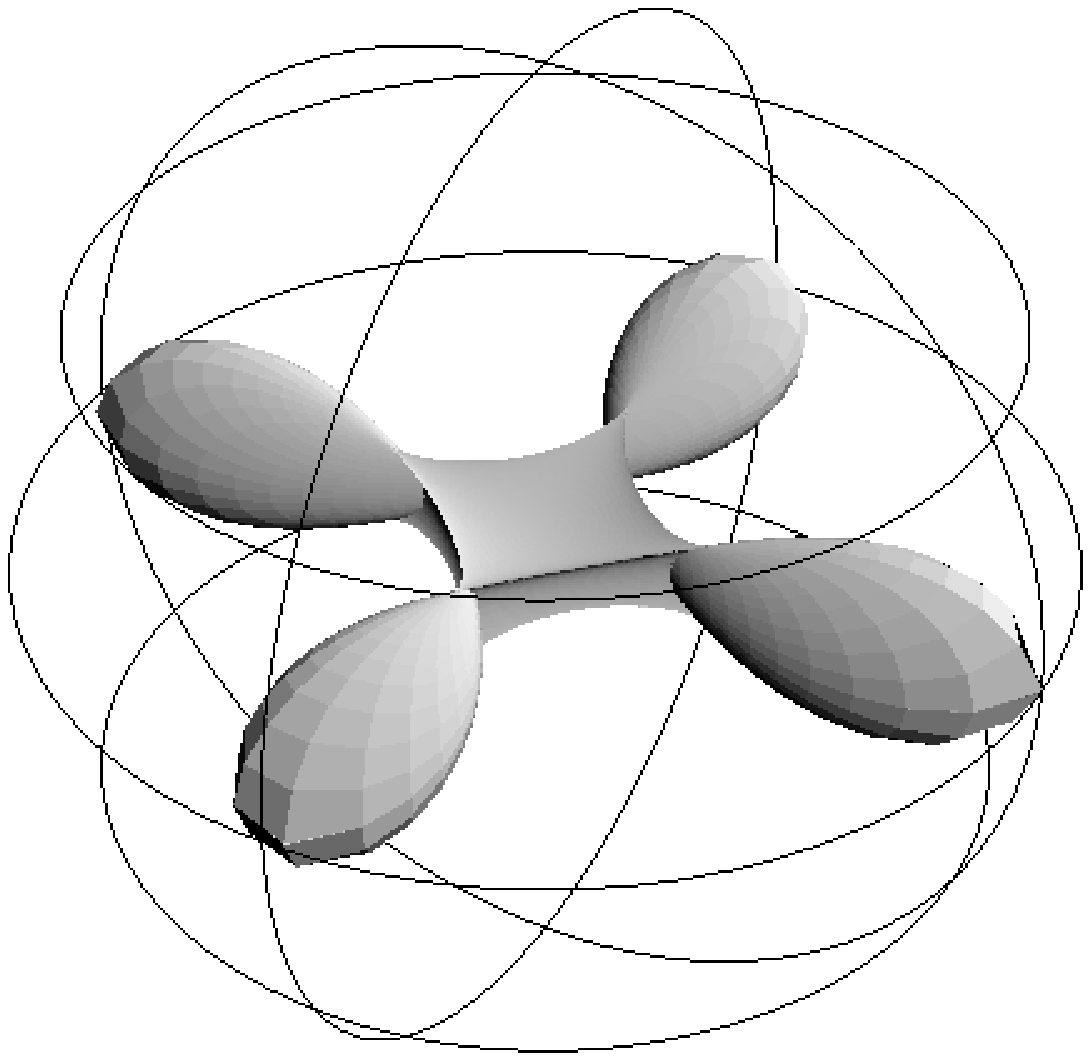} &
  \includegraphics[width=0.45\textwidth]{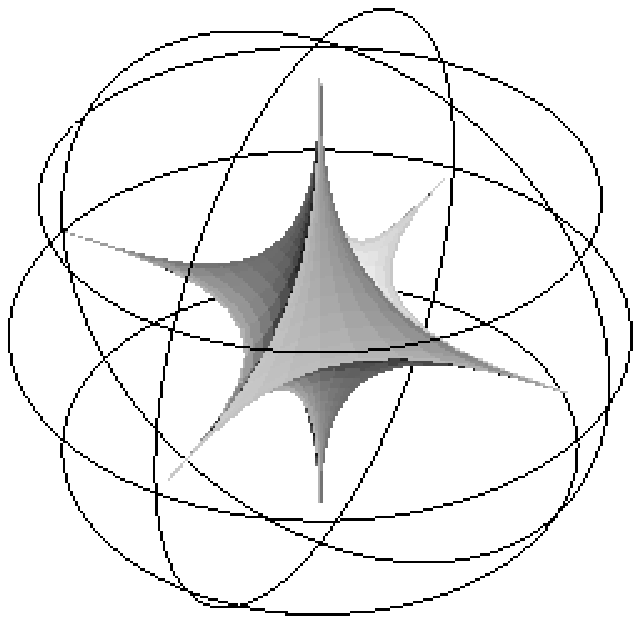} \\
  a four-noid & its caustic 
\end{tabular}
\end{center}
\caption{A flat four-noid and its caustic}%
\label{fig:caustic}
\end{figure}

As seen in Figure~\ref{fig:caustic}, the ends of caustics are typically
highly acute, 
and the singular sets accumulate at the ends, even though
non-cylindrical complete ends are tangent to the ideal boundary
(see also Figure~\ref{fig:nis1andmis4} in Section~\ref{sec:example}).
Prompted by Roitman's work, the authors numerically examined such
incomplete ends on several caustics and were surprised 
at their acuteness and 
at the additional symmetry as mentioned above, 
and so wished to analyze their behavior precisely. 
This is the central motivation of this paper, 
which is a sequel of the previous paper \cite{KRUY}.

As an analogue of a result in \cite{UY} for constant mean curvature one
surfaces (CMC-1 surfaces) in $H^3$,
\cite{GMM} showed that a complete regular end is asymptotic to the
$m$-fold cover of one of the rotationally symmetric flat surfaces, 
and that $m=1$ implies proper embeddedness of the end.  
In order to state both this result and other new results,
we fix the setting as follows: Let $f\colon D^* \to H^3$ be a weakly
complete end of finite type, 
which we abbreviate as ``{\em WCF-end\/}'' 
(Weakly complete and finite type were defined in \cite{KRUY}, and are
also defined in Definition~\ref{def:complete} here).

Moreover, we assume the WCF-end $f$ is regular. 
(The regularity for WCF-ends is defined in the same way as for complete
ends.)
Denoting by $\pi \colon H^3 \to \R^3_+$ the projection of $H^3$ 
to the Poincar\'e upper half-space model 
$\R^3_+:=\{ (\zeta,h)\in \C\times \R\,;\, h>0\}$, 
we discuss the asymptotic behavior of regular WCF-ends in terms of 
$\pi\circ f$.

Note that WCF-ends are generalizations of complete ends. 
Moreover, 
{\it all ends of caustics of complete regular-ended flat fronts are 
regular WCF-ends 
{\rm (}see \cite[Theorems 7.4 and 7.6]{KRUY}{\rm )}}.

G\'alvez, Mart\'\i{}nez and Mil\'an \cite{GMM} proved that
each complete regular end is asymptotic to a finite 
covering of a rotationally symmetric end.
The following proposition is essentially the same as
their result, but now stated in terms of a geometric quantity 
we call the {\it pitch} 
(see Proposition \ref{fact:complete-asymptotic} 
and Theorem~\ref{thm:incomplete}), 
and also in terms of the ratio of the Gauss maps 
(see \cite{KRUY} or \eqref{eq:gauss-ratio} for a definition):
\begin{introproposition}\label{fact:complete-asymptotic}
 Suppose that the flat surface  $f$ is a complete regular end.
 Then for a sufficiently small $\varepsilon>0$, 
 the image 
$\pi\circ f(D^*_\varepsilon)$
$(D^*_\varepsilon :=  \{z\in\C\,|\,0<|z|<\varepsilon\})$
 is congruent to a portion of the image of 
 $[0,2\pi) \times (0,h_0) \ni(t,h)\mapsto (\varphi_h(t),h)\in \R_+^3$ 
 with
 \begin{equation}\label{eq:asymp-complete}
      \varphi_h(t)=c e^{\imag mt}h^{1+p}+o(h^{1+p}),
 \end{equation}
 for a non-zero constant $c$, a nonpositive constant $p$, 
 and a positive integer $m$.
 Here, 
 $m$ is the multiplicity of the end  as in  \eqref{eq:multiplicity},
 $o(h^{1+p})$ denotes terms of order higher than $h^{1+p}$
 as $h\to 0$, 
 and the exponent $p$ {\rm (}called the {\em pitch\/} of $f${\rm )}
 is related to the ratio $\alpha$ of the Gauss maps  
  by
 \[
     p=-\frac{1+\alpha}2 \in (-1,0].
 \]
 In particular, 
 the pitch of each parallel surface $f_t$ is  also equal to $p$ whenever
 $f_t$ is complete. 
\end{introproposition}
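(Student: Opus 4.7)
The plan is to combine the asymptotic classification of complete regular ends from \cite{GMM} with a direct computation of the pitch $p$ on the one-parameter family of model rotationally symmetric ends.

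By \cite{GMM}, every complete regular end is asymptotic, up to an ambient isometry of $H^3$, to an $m$-fold cover of a rotationally symmetric complete flat end, where $m$ is the multiplicity defined by \eqref{eq:multiplicity}. Normalizing so that the ideal point at which the end accumulates is $(0,0) \in \partial \R^3_+$, the proof is reduced to establishing \eqref{eq:asymp-complete} for a single rotationally symmetric complete flat end; the $m$-fold covering then automatically replaces $e^{\imag t}$ by $e^{\imag m t}$ in the leading term and leaves the exponent $1 + p$ unchanged.

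For a rotationally symmetric complete flat end with vertical axis through $(0,0)$, I would use the explicit flat front representation via the pair $(G, G_*)$: in a rotationally equivariant conformal coordinate $z$ the two hyperbolic Gauss maps take normal forms determined by the ratio $\alpha$, namely $G(z) = z$ and $G_*(z) = c_*\,z^{-\alpha}$ up to a M\"obius adjustment. Substituting these into the representation and projecting to the Poincar\'e upper half-space, both $h(z)$ and the horizontal coordinate $\zeta(z)$ are expressed as explicit powers of $|z|$ (times the appropriate rotation factors) to leading order. Reading off the exponents produces the identification
\[
 1+p = \frac{1-\alpha}{2}, \qquad\text{equivalently}\qquad p = -\frac{1+\alpha}{2},
\]
with a nonzero leading constant $c$. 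The range $\alpha \in [-1,1)$ dictated by regularity and completeness of the end translates precisely into $p \in (-1,0]$.

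Finally, the assertion about parallel surfaces is immediate: the two hyperbolic Gauss maps are invariants of the parallel family $\{f_t\}$ of a flat front, hence so is the ratio $\alpha$, and therefore the pitch $p$. The main obstacle is the computation identifying $1+p$ with $(1-\alpha)/2$: one must substitute into the representation formula carefully, since the natural conformal coordinate at the end is not the upper half-space coordinate $\zeta$, so a conformal change of variable together with an asymptotic inversion expressing $|z|$ as a power of $h$ is required to read off the exponent of $h$ in $\varphi_h(t)$.
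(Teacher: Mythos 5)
Your overall strategy differs from the paper's: the paper does not pass through the rotationally symmetric model at all, but proves Proposition~\ref{fact:complete-asymptotic} as a corollary of the refined Theorems~\ref{thm:non-cyl} and \ref{thm:cyl}, which are obtained by normalizing the dominant pair $(G,\omega)$ (Lemma~\ref{lem:normalized-omega}), substituting into the representation formula \eqref{eq:g-omega-repr}, and computing the entries $E_{ij}$, hence $1/h$ and $\zeta/h$, directly for the given end. Your route --- quote the G\'alvez--Mart\'\i{}nez--Mil\'an asymptotic classification and then compute the pitch on the model --- is a legitimate alternative in outline, and your treatment of the last assertion (parallel fronts share the hyperbolic Gauss maps, hence $\alpha$, hence $p$) is exactly the paper's.

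However, there are two concrete problems. First, your normal form for the rotationally symmetric model is wrong: with $G=z$ the other Gauss map of a flat front of revolution is $G_*=\alpha z$ (proportional to $G$), and the $m$-fold cover is $(G,G_*)=(z^m,\alpha z^m)$ as in Example~\ref{ex:revolution}; your $G_*=c_*z^{-\alpha}$ is not even single-valued for generic $\alpha\in(-1,1)$, and its derivative ratio $dG_*/dG$ tends to $\infty$ rather than to $\alpha$ at $z=0$. The dependence on $\alpha$ enters not through an exponent of $G_*$ but through the order $\mu=\frac{\alpha+1}{\alpha-1}m-1$ of $\omega$ (via \eqref{eq:gauss-repr} and \eqref{eq:can-hopf}); it is from $h\sim|z|^{\,m-1-\mu}$ and $\zeta/h\sim e^{\imag mt}|z|^{\,\mu+1}$ that one reads off $p=(\mu+1)/(m-1-\mu)=-(1+\alpha)/2$. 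Second, the reduction itself is not free: you must state in what sense the end is ``asymptotic'' to the model in \cite{GMM} and check that this notion is strong enough to yield the error term $o(h^{1+p})$ for the horosphere slices, and that the model attached to the end has the same ratio $\alpha$ and the same normalization (the base point $o\in\gamma$ and the constant $c$). The conversion does work --- a hyperbolic displacement of size $o(1)$ at height $h$ is a Euclidean displacement of size $o(h)$, and $o(h)\subseteq o(h^{1+p})$ precisely because $p\le 0$ --- but this is where the content of the proposition lives, and as written your proposal asserts it rather than proves it.
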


Later, we shall give a refinement of this assertion, 
that is,  we shall compute the second term of the expansion 
in \eqref{eq:asymp-complete} 
(Theorems~\ref{thm:non-cyl} and \ref{thm:cyl}).

A description of the asymptotic behavior of CMC-$1$ surfaces in $H^3$
was first given in \cite{UY}, 
and refinements were given by Sa Earp and Toubiana \cite{ET}
and Daniel \cite{D}.
In particular, Daniel's refinement gives relationships
between the flux and asymptotic behavior of complete regular ends of 
CMC-1 surfaces.  
To prove Proposition~\ref{fact:complete-asymptotic} 
and its refinements (Theorems~\ref{thm:non-cyl} and \ref{thm:cyl}), 
we define an analogue of the flux matrix as in \cite{RUY}.  
In this sense, 
Theorems~\ref{thm:non-cyl}, \ref{thm:cyl} and
Theorem~\ref{thm:incomplete} below are an analogue of Daniel's line of
investigation. 

On the other hand, the asymptotic behavior of an incomplete end, 
as in Theorem~\ref{thm:incomplete} below, 
has clearly not been analyzed in the case
of CMC-1 surfaces, for the obvious reason that those surfaces do not have
singularities.  
Analysis of the incomplete end case leads to a mysterious connection
between flat surfaces and cycloid curves:
%
\begin{figure}
\footnotesize
\begin{center}
\begin{tabular}{c@{\hspace{2em}}c@{\hspace{2em}}c}
 \includegraphics[width=0.2\textwidth]{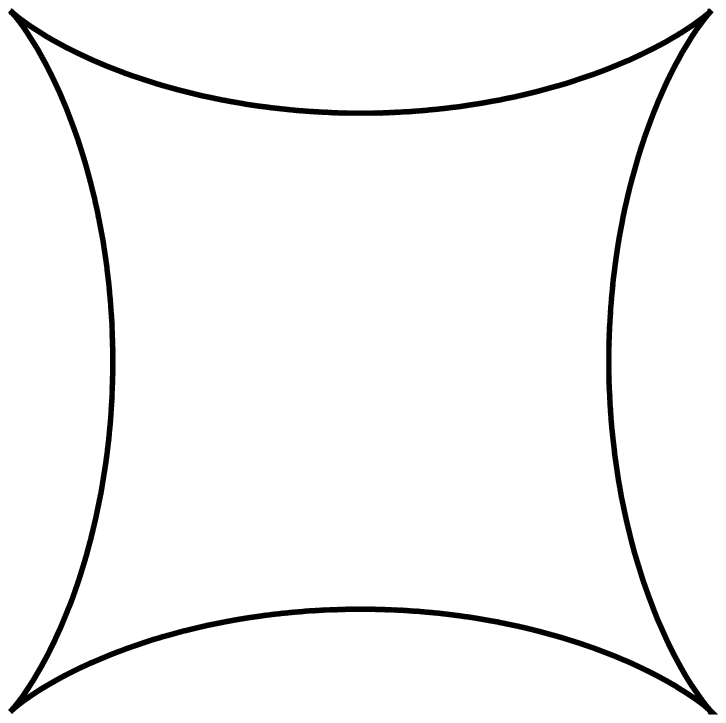} &
 \includegraphics[width=0.2\textwidth]{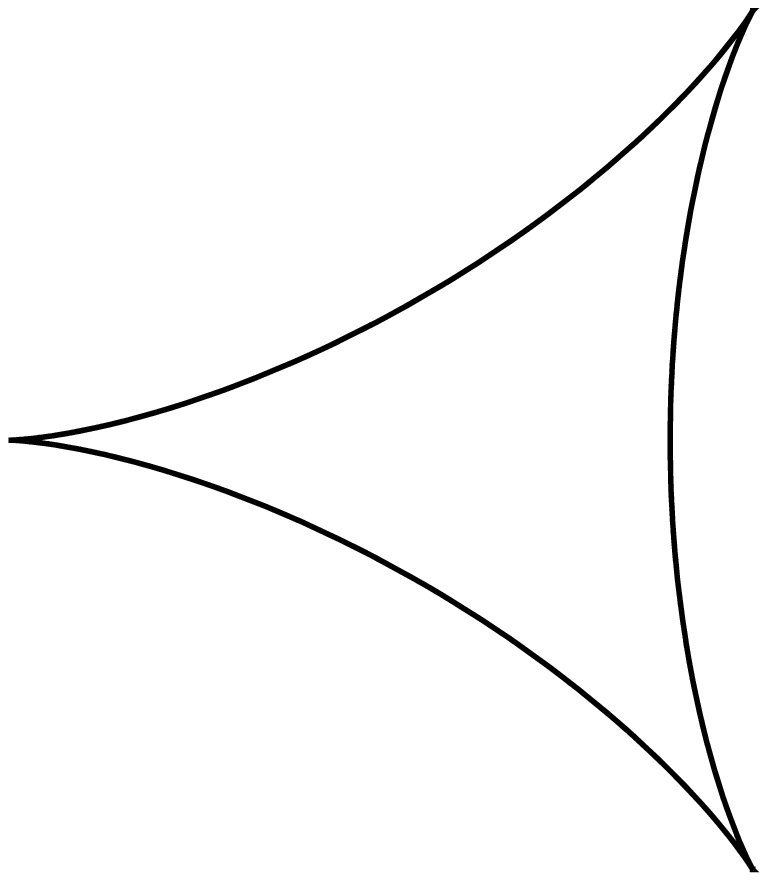} &
 \includegraphics[width=0.2\textwidth]{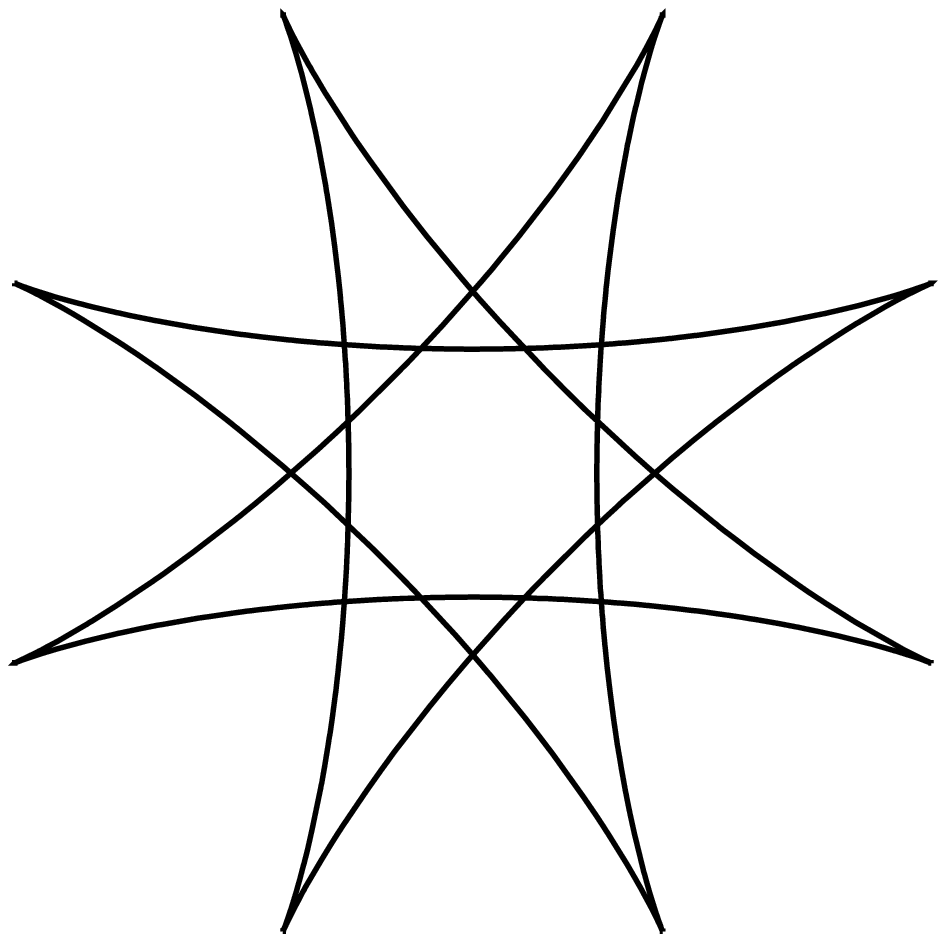} \\
 $\begin{array}{r@{}l}
     (m,n)&=(1,2)\\
    (m_0,n_0)&=(1,2)\\
      d&=1
  \end{array}$&
 $\begin{array}{r@{}l}
     (m,n)&=(1,3)\\
    (m_0,n_0)&=(\frac{1}{2},\frac{3}{2})\\
      d&=2
  \end{array}$&
 $\begin{array}{r@{}l}
     (m,n)&=(1,4)\\
    (m_0,n_0)&=(1,4)\\
      d&=1
  \end{array}$\\[12pt]
 \multicolumn{3}{c}{\small{Hypocycloids}}\\[8pt]
 \includegraphics[width=0.2\textwidth]{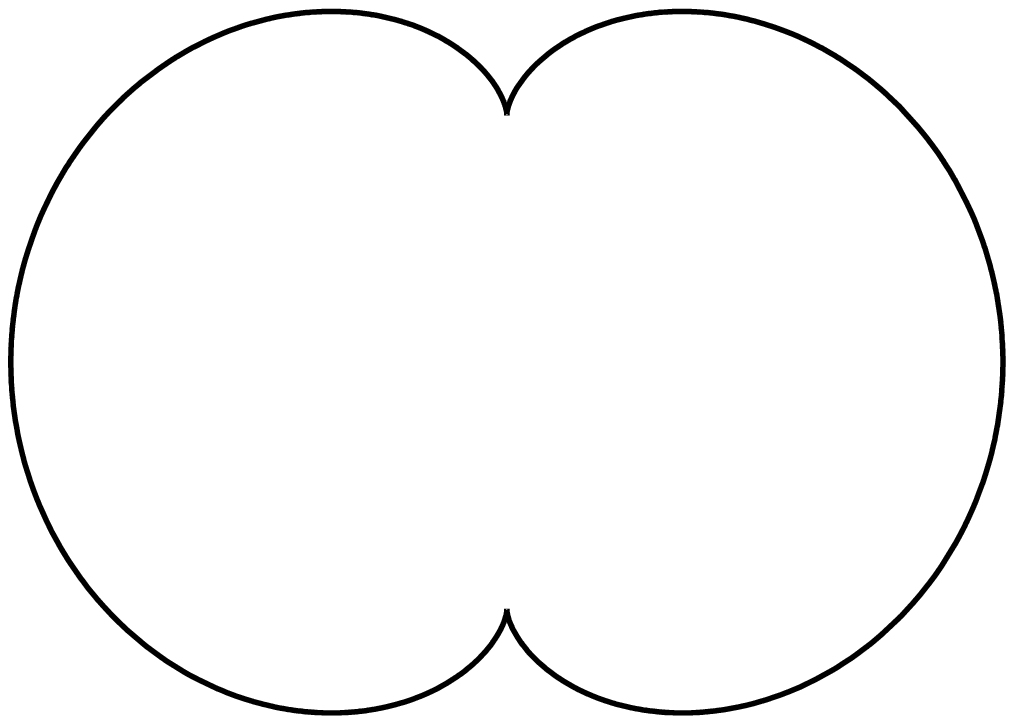} &
 \includegraphics[width=0.2\textwidth]{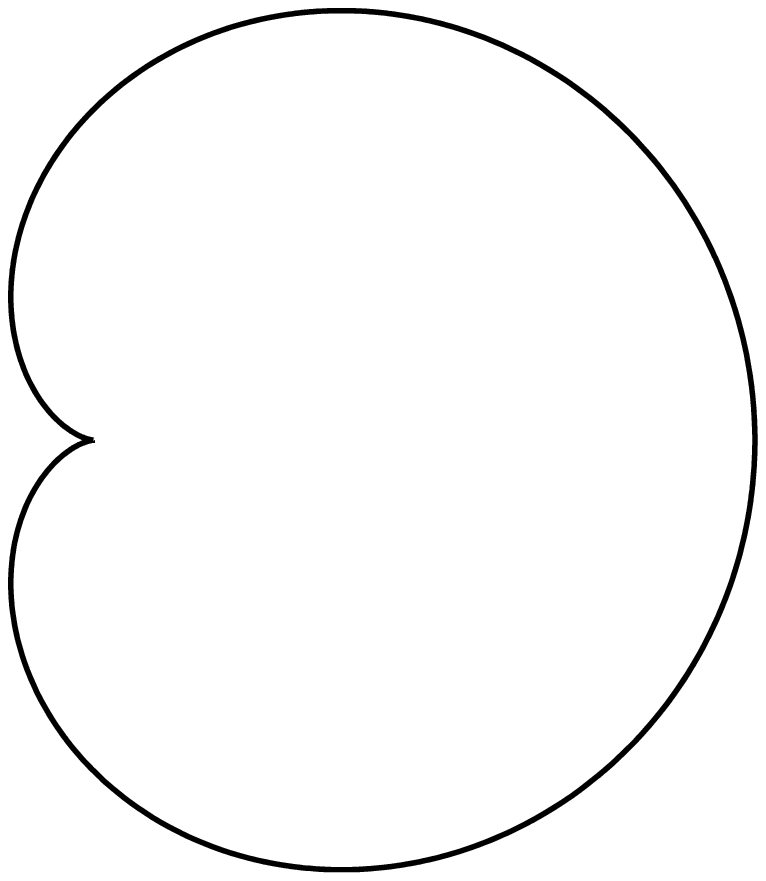} &
 \includegraphics[width=0.2\textwidth]{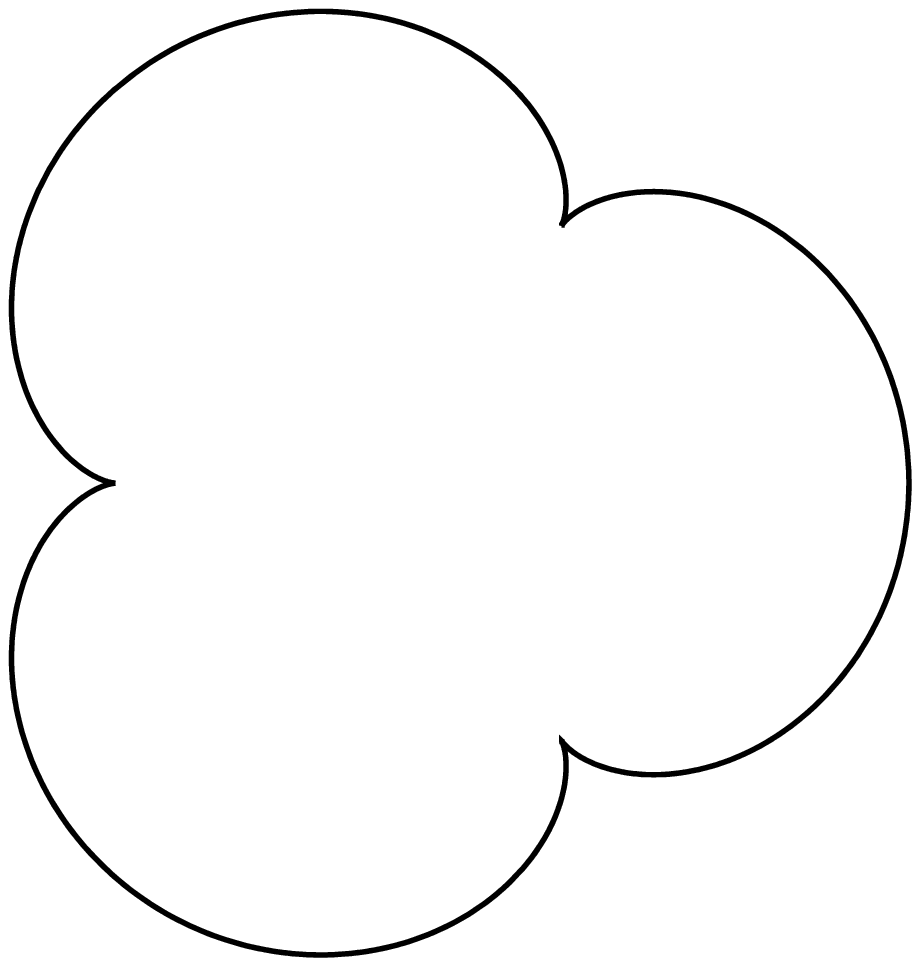} \\
 $\begin{array}{r@{}l}
     (m,n)&=(2,1)\\
    (m_0,n_0)&=(2,1)\\
      d&=1
  \end{array}$&
 $\begin{array}{r@{}l}
     (m,n)&=(3,1)\\
    (m_0,n_0)&=(\frac{3}{2},\frac{1}{2})\\
      d&=2
  \end{array}$&
 $\begin{array}{r@{}l}
     (m,n)&=(5,3)\\
    (m_0,n_0)&=(\frac{5}{2},\frac{3}{2})\\
      d&=2
  \end{array}$\\[12pt]
 \multicolumn{3}{c}{\small{Epicycloids}}
\end{tabular}
\end{center}
\caption{Cycloids}\label{fig:cyc}
\end{figure}
A cycloid is the image of the map 
 \[
      \Cyc_{m,n}(t):=
      \frac{1}{m}
      \left[(m+n)e^{\imag(m-n)t}
              +(m-n)e^{\imag(m+n)t}\right]. 
 \]
Let $d$ be the greatest common divisor of $m+n$ and $m-n$ and set 
$m=m_0d$, $n=n_0 d$ ($m_0,n_0\in \frac12\Z$).
Then the image of $\Cyc_{m,n}$ is determined by the pair 
$(m_0,n_0)$ satisfying $m_0,n_0 \in \frac12\Z$,
$m_0\pm n_0\in \Z$ and
$\GCD[m_0+n_0,m_0-n_0]=1$.
Since such a pair $(m_0,n_0)$ corresponds bijectively 
to a rational number $n_0/m_0\in \Q_+\setminus\{1\}$, 
we denote the image of $\Cyc_{m,n}$ by $c_{n/m}(=c_{n_0/m_0})$,
and is called an {\it epicycloid\/} if $n/m<1$ and
a {\it hypocycloid\/} if $n/m>1$.
It is well-known that $c_{n/m}$ is created by the trace of a point on a
circle of signed radius 
$1-p$ 
rolled along another circle of radius
$2p=2n/m=2n_0/m_0$ without slippage, 
and has no self-intersections if and only if $|n_0-m_0|=1$.
Cycloids admit $(3/2)$-cusps, at which their unit normal vectors are
well-defined smooth vector fields.
The number $m_0$ takes a value in $\frac12\Z$, and is called 
the {\it winding number\/} of the cycloid $c_{n_0/m_0}$, 
which is the number of times that the unit normal vector to the cycloid
winds around $S^1$ as the cycloid is traversed once.  
The map $\Cyc_{m,n}$ represents a $d$-covering of the cycloid $c_{n/m}$ 
(see Figure \ref{fig:cyc}).

In order to describe our main results, 
we give an integer $2n$,  
which will correspond both to the number of singularities of a cycloid
and to the number of connected components of cuspidal edges appearing in
an incomplete WCF-end.
The canonical forms $\theta$ and $\omega$ associated to a flat front
will be defined in Section~\ref{sec:prelim}.
Note that incomplete regular WCF-ends are all cylindrical.
If a regular WCF-end 
$f:D^*\to H^3$ is cylindrical, then 
$\rho(z):=\theta/\omega$ (given in \eqref{eq:rho-def}) is a
nonvanishing  
holomorphic function near $z=0$.
Moreover if $f$ is incomplete, then  $|\rho(0)|=1$ 
(see Lemma \ref{lem:incomplete}), and 
whenever $\rho$ is non-constant, 
the ramification order $n$ of $\rho$ is computed as 
\begin{equation}
 \label{eq:formula-drhorho}
  n = 1+ \ord_0\frac{d\rho}{\rho}, \qquad
        \frac{d\rho}{\rho}=
           \Bigl(\frac{\hat\theta'}{\hat\theta}-
                 \frac{\hat\omega'}{\hat\omega}
           \Bigr)\,dz
	   =  
           \left(\frac{G''_*}{G'_*}-\frac{G''}{G'}
	        +2\frac{G'+G'_*}{G-G_*}\right)\,dz,
\end{equation}
where $\omega=\hat\omega\,dz$, $\theta=\hat\theta\,dz$,
$'=d/dz$
(see Section~\ref{sec:prelim} and \cite[(3-15)]{KRSUY}),
and  $\ord_0\varpi=k$ for a given meromorphic $1$-form $\varpi$ if it is
written as  $\varpi=z^k \varphi(z)\,dz$ ($\varphi(0)\neq 0$).
\begin{introtheorem}\label{thm:incomplete}
 Suppose that the flat front $f$ is an incomplete
 but weakly complete regular end of finite type 
 {\rm (}i.e., an incomplete regular WCF-end{\rm)},
 which is not contained in a geodesic line in $H^3$.
 Then for a sufficiently small $\varepsilon>0$, 
 the image $\pi \circ f(D^*_\varepsilon)$
 is congruent to a portion of the image of 
 $[0,2\pi) \times (0,h_0) \ni (t,h)\mapsto (\varphi_h(t),h)\in \R_+^3$,
 with
 \begin{equation}\label{eq:asymp-incomplete}
    \varphi_h(t)=
             h^{1+p}\Cyc_{m,n}(t)+o(h^{1+p}),
	     \quad p=\frac{n}{m}\in (0,1)\cup (1,\infty) \; .
 \end{equation}
 Here, $m$ is the ramification order of the hyperbolic Gauss map
 $G(z)$  at $z=0$ 
 {\rm (}which coincides with that of the other hyperbolic Gauss map
 $G_*(z)${\rm )},  and $n$ $(\neq m)$ 
 is the ramification order of the function
 $\rho:=\theta/\omega$ at $z=0$.
 The set of singular points of $f$ consists 
 only of cuspidal edges, corresponding to the cusps
 on the slices $\varphi_h(t)$ created by cutting with 
 $h=\mbox{constant}$.
 Here, the exponent $p$ is again called the {\em pitch\/} of $f$.
 In particular, $f$ has no self-intersections outside of a sufficiently
 large geodesic ball in $H^3$
 if and only if $d=1$ and $|m_0-n_0|=1$, where $m_0,n_0$ and $d$
 are the numbers determined by the map $\Cyc_{m,n}$.
\end{introtheorem}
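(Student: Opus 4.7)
The plan is to compute $\pi\circ f$ on horocyclic slices $\{h=\text{const}\}$ using the null holomorphic $\SL(2,\C)$-frame representation $f=EE^*$, and to extract the cycloid from the structural constraints imposed by cylindricality, incompleteness, and the identity~\eqref{eq:formula-drhorho}. With $G=E_{11}/E_{21}$, $G_*=E_{12}/E_{22}$, and the determinant relation $E_{21}E_{22}=1/(G-G_*)$, the projection to the upper half-space model reads
\[
h=\frac{1}{|E_{21}|^{2}+|E_{22}|^{2}},\qquad
\zeta=\frac{G+|w|^{2}G_*}{1+|w|^{2}},\qquad
|w|^{2}=\Bigl|\rho\,\frac{dG}{dG_*}\Bigr|,
\]
and by an isometry of $H^{3}$ I bring the common limit $G(0)=G_*(0)$ (which exists by regularity together with cylindricality) to $0\in\partial H^{3}$ and rescale so that $G(z)=z^{m}+O(z^{m+1})$.

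Writing $G_*(z)=az^{m}+O(z^{m+1})$ and computing the residue of $d\rho/\rho$ at $z=0$ from \eqref{eq:formula-drhorho}, one finds it equals $2m(1+a)/(1-a)$; cylindricality means $\rho$ is holomorphic and nonvanishing at $0$, so $d\rho/\rho$ has no pole there and the residue must vanish, forcing $a=-1$. Combined with incompleteness $|\rho(0)|=1$ (which gives $|w(0)|=1$), this produces the key cancellation $G+G_*=O(z^{m+1})$: the ``naive'' leading circular mode $\sim h\,e^{\imag mt}$ of $\zeta$ is absent, leaving room for the cycloid at a smaller order. Setting $\rho(z)=\rho_{0}(1+cz^{n}+O(z^{n+1}))$ with $|\rho_{0}|=1$ and $c\neq 0$, and using that $d\rho/\rho$ vanishes to order $n-1$ at $0$, further Taylor relations forced by \eqref{eq:formula-drhorho} propagate the cancellation to $G+G_*=O(z^{m+n})$. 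In polar coordinates $z=re^{\imag t}$ one has $h\sim r^{m}$, so $r\sim h^{1/m}$, and two contributions to $\zeta$ survive at order $r^{m+n}=O(h^{1+n/m})$: (i) the leading nonzero Taylor term of $G+G_*$ at $z^{m+n}$, and (ii) the product $\tfrac12\delta\,G_*$ with $\delta:=|w|^{2}-1\sim 2\Re(\bar\rho_{0}cz^{n})$, which carries both $z^{n}$ and $\bar z^{n}$ and thus contributes both $e^{\imag(m+n)t}$ and $e^{\imag(m-n)t}$ modes. Collecting,
\[
2\zeta=r^{m+n}\bigl[A\,e^{\imag(m-n)t}+B\,e^{\imag(m+n)t}\bigr]+o(r^{m+n}),
\]
and careful bookkeeping of \eqref{eq:formula-drhorho} to order $n+1$ shows the ratio $A/B$ equals $(m+n)/(m-n)$ exactly, matching $\Gamma_{m,n}$. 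Absorbing the overall complex scale into a congruence of $\R^{3}_{+}$ yields \eqref{eq:asymp-incomplete} with $p=n/m$.

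For the singular set and embeddedness, the locus $\{|\omega|=|\theta|\}=\{|w|=1\}$ reduces near $z=0$ to the zero set of $\Re(\bar\rho_{0}cz^{n})+o(r^{n})$, which is $2n$ asymptotically radial half-lines; the standard front criterion (cf.~\cite{KRSUY}) identifies each as a cuspidal edge, and these project under $\pi\circ f$ to the $2n$ cusps of the cycloid in every slice $\{h=\text{const}\}$. Embeddedness of the end outside a sufficiently large geodesic ball reduces to embeddedness of the planar curves $\varphi_{h}$ for small $h$, i.e., of the $d$-cover $\Gamma_{m,n}$ of $c_{n/m}$; this is equivalent to $d=1$ together with the classical cycloid embedding condition $|m_{0}-n_{0}|=1$.

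\noindent\emph{Main obstacle.} The most delicate step is the verification that $A/B=(m+n)/(m-n)$ exactly. This requires unpacking \eqref{eq:formula-drhorho} to sufficient order to pin down the $z^{m+n}$-coefficient of $G+G_*$ in terms of $(\rho_{0},c)$ and checking that its real-analytic combination with $\delta\,G_*$ produces precisely the Fourier coefficients of $\Gamma_{m,n}$; this ``miracle'' is the combinatorial heart of the theorem.
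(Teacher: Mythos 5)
Your overall strategy---computing $\pi\circ f$ from the frame $\E_f$, slicing by horospheres, and reading off the two Fourier modes $e^{\imag(m\pm n)t}$ from the interplay of $G+G_*$ and $|w|^2-1$---is the same as the paper's (which runs through Lemma~\ref{lem:normalized-omega} and Theorem~\ref{thm:cyl}), and your derivation of $\alpha=-1$ from the vanishing residue of $d\rho/\rho$ is correct. But there is a genuine gap at the step you treat as automatic: the claim that \eqref{eq:formula-drhorho} \emph{forces} $G+G_*=O(z^{m+n})$. Your normalization (send $G(0)$ to $0$ and rescale so $G=z^m+O(z^{m+1})$) leaves unfixed a unipotent M\"obius freedom $u=\left(\begin{smallmatrix}1&0\\-a^{-1}&1\end{smallmatrix}\right)$, which moves the second endpoint of the slicing geodesic while preserving your normal form. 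Writing $G=z^m$, $G_*=-z^m+\alpha_1z^{m+l}+o(z^{m+l})$, the paper's computation \eqref{eq:drhooverrho} gives $d\rho/\rho=\{\tfrac{\alpha_1}{m}(m+l)(m-l)z^{l-1}+o(z^{l-1})\}dz$. In the hypocycloid case $n>m$ (which includes all the caustic ends motivating the theorem), a generic choice of that M\"obius freedom yields $l=m$, so the leading coefficient vanishes because of the factor $(m-l)$, \emph{not} because $\alpha_1=0$; the ramification order of $\rho$ is still $n$, yet $G+G_*=\alpha_1z^{2m}+\cdots$ with $2m<m+n$. With such a choice the slice degenerates (the paper's Remark~\ref{rem:cyl-non-principal}: one gets $2h+o(h)$, no cycloid). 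So the cancellation you need is not a consequence of the ramification order of $\rho$; it is a statement about a distinguished geodesic whose existence and uniqueness is exactly the content of the paper's indentation-number machinery (Lemma~\ref{lem:indentation} and the ``principal axis''). Your proof must either prove that lemma or supply an equivalent normalization argument.

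Second, the identity you flag as the ``main obstacle''---that the two surviving modes combine with ratio $A/B=(m+n)/(m-n)$---is the actual content of the theorem and is only asserted. The paper settles it not by Taylor bookkeeping on $G+G_*$ but by first normalizing the canonical form to $\omega=-\lambda z^{-1}(1+z^n+o(z^n))^2dz$ with $\lambda=m/2$ pinned down by incompleteness (Lemma~\ref{lem:normalized-omega}), then computing the $E_{ij}$ explicitly from \eqref{eq:g-omega-repr}; the cycloid coefficients $(m\pm n)$ drop out of $V_{m,n,m/2}$ in \eqref{eq:C}. Some version of this computation is unavoidable and is missing from your sketch. Finally, for the singular set you need not only that $\{|\rho|=1\}$ consists of $2n$ rays but also that no swallowtails or worse occur; the paper checks this via the criterion of \cite{KRSUY} by showing the zero sets of $\Re(w^n)$ and $\Im(w^n)$ are disjoint near the puncture (Proposition~\ref{lem:cuspidal})---a short but necessary verification that ``the standard front criterion'' does not supply by itself.
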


It should be remarked that a surface given by
\[
   f_p(\vartheta,h):=(r(\vartheta)e^{\imag \vartheta}h^{1+p},h)\in \R^3_+
   \qquad (p>0,~ p\ne 1)
\]
is asymptotically flat with respect to the Poincar\'e
metric of constant curvature $-1$ as $h\to +0$ if and only if
\[
   (p^2-1)\frac{du}{d \vartheta}= p^2+ (p^2 + 1)u^2+u^4,
\]
where $u= d \log r/d \vartheta$.
The general solution of this ordinary differential equation 
for $p=n/m$ characterizes the cycloids $c_{n/m}$, 
which removes the mystery why cycloids appear in the asymptotic
behavior of ends of flat surfaces (see Appendix~\ref{B}).

Theorem \ref{thm:incomplete} implies that the pitch of any incomplete
WCF-end is a positive rational number not equal to $1$, 
so let us use the following terminology: 
an incomplete WCF-end is of {\it hypocycloid-type\/} 
if the pitch is greater than $1$, or of {\it epicycloid-type\/}
if the pitch is less than $1$, respectively.

Theorem \ref{thm:incomplete} is particularly useful for studying
caustics of complete flat fronts in $H^3$, as those caustics have
incomplete ends in general.
The pitch of each end of the caustic can be computed using 
just the pair of hyperbolic Gauss maps for the original
flat front as described in Theorem~\ref{thm:Sec5} in the final section, 
and it then tells us the asymptotic behavior of the end of the caustic.

Even if we do not know a priori whether the end is complete, 
any regular WCF-end is asymptotic to either 
\eqref{eq:asymp-complete} in 
Proposition~\ref{fact:complete-asymptotic} or 
\eqref{eq:asymp-incomplete} in Theorem~\ref{thm:incomplete}. 
Thus the {\it pitch\/} $p$ is a single entity that encompasses 
both cases \eqref{eq:asymp-complete} and \eqref{eq:asymp-incomplete}.  
As a corollary of Proposition~\ref{fact:complete-asymptotic},
Theorem~\ref{thm:incomplete} and \cite[Propositions 3.1 and 7.3]{KRUY},
we have: 
\begin{figure}
\footnotesize
\begin{center}
 \begin{tabular}{c@{}c@{}c@{}c}
  \includegraphics[width=0.25\textwidth]{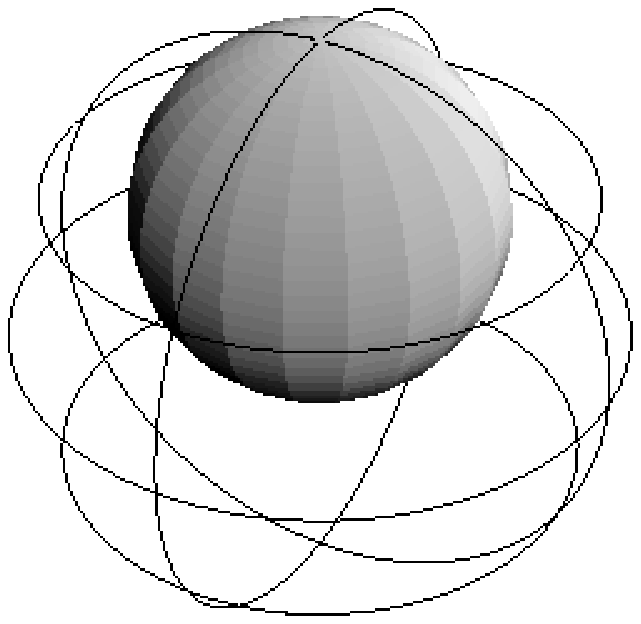}&
  \includegraphics[width=0.25\textwidth]{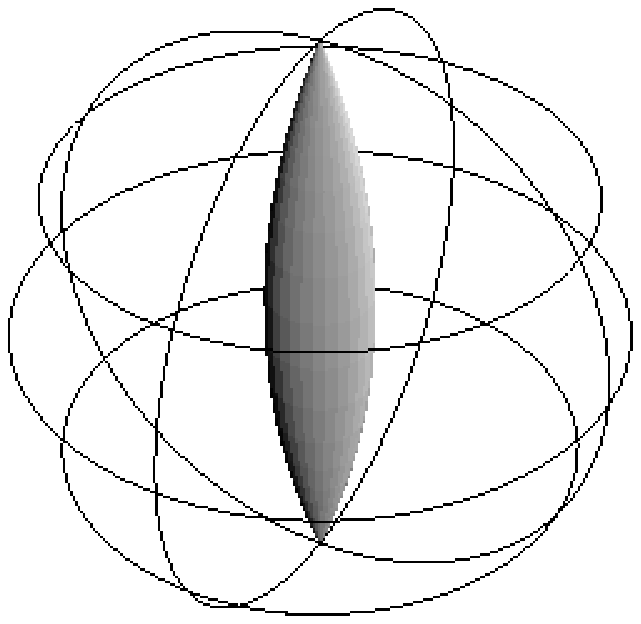}&
  \includegraphics[width=0.25\textwidth]{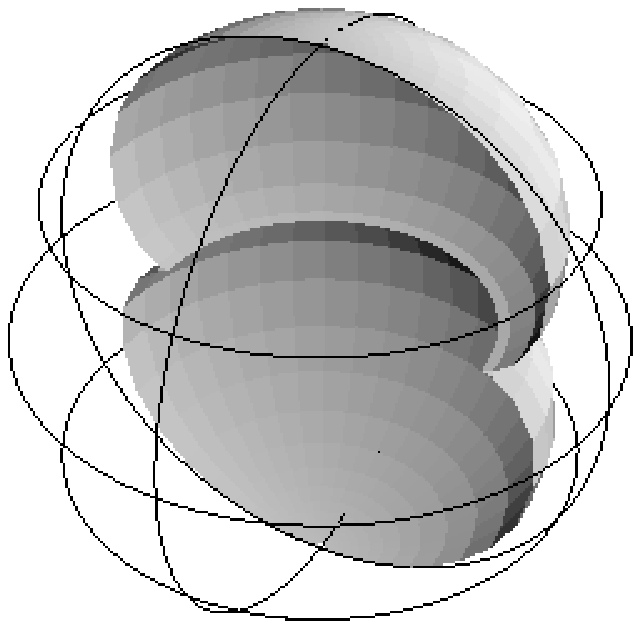}&
  \includegraphics[width=0.25\textwidth]{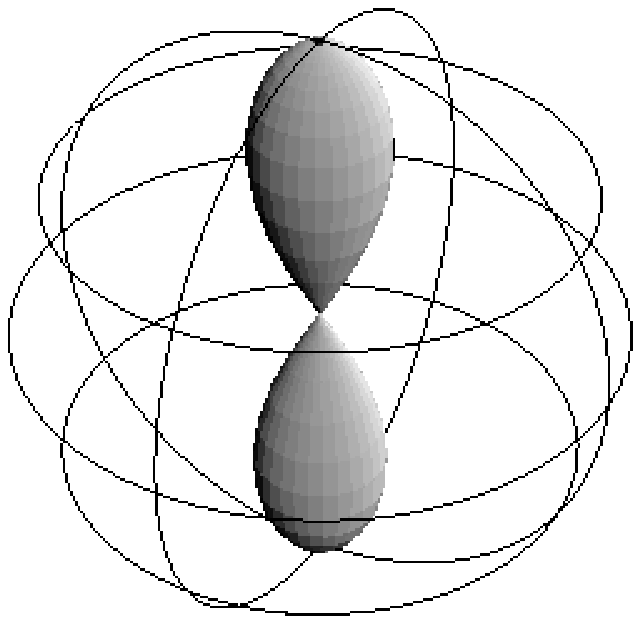}\\
  horosphere &
  cylinder &
  snowman &
  hourglass
 \end{tabular}
\end{center}
\caption{Flat fronts of revolution, shown in the Poincar\'e ball 
         model for $H^3$}\label{fig:revolution}
\end{figure}
\begin{introcorollary}\label{cor:ends}
 The pitch $p$ of a complete regular end takes its value in $(-1,0]$, 
 and the pitch $p$ of an incomplete regular WCF-end
 takes its value in 
 $\Q_{+}\setminus\{1\}$, where $\Q_+$ is the set of 
 positive rational numbers.
 Moreover, a regular WCF-end is 
 \begin{itemize}
 \item a snowman-type end if and only if $-1<p<-1/2$, 
 \item a horospherical end if and only if $p=-1/2$,
 \item an hourglass-type end if and only if $-1/2<p<0$,
 \item a complete cylindrical end if and only if $p=0$,
 \item an end of epicycloid-type with 
       $2n$ cusps and winding number $m$ if and only if $p=n/m\in (0,1)$, 
 \item 	an end of hypocycloid-type with 
	$2n$ cusps and winding number $m$
       if and only if $p=n/m\in (1,\infty)$.
 \end{itemize}
\end{introcorollary}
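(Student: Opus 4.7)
The strategy is to combine Proposition~\ref{fact:complete-asymptotic} and Theorem~\ref{thm:incomplete} with the classification of rotationally symmetric complete flat fronts in $H^3$ from \cite[Propositions 3.1 and 7.3]{KRUY}. Proposition~\ref{fact:complete-asymptotic} restricts the pitch of a complete regular end to $(-1,0]$, and Theorem~\ref{thm:incomplete} restricts the pitch of an incomplete regular WCF-end to $\Q_{+}\setminus\{1\}$. Since these two ranges are disjoint, the value of $p$ alone already determines whether $f$ is complete or incomplete, and hence into which row of the corollary it falls; the six cases in the statement simply partition $(-1,0]\cup(\Q_{+}\setminus\{1\})$.

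For the complete case, I would appeal to the one-parameter family of rotationally symmetric complete flat fronts in \cite[Propositions 3.1 and 7.3]{KRUY}, whose members split geometrically into horospheres, geodesic cylinders, snowman-type fronts, and hourglass-type fronts. For each model one writes the rotational profile in the upper half-space model and reads off the exponent of $h$ appearing in $\varphi_h(t)=c\,e^{\imag m t}h^{1+p}+o(h^{1+p})$ as $h\to 0$. A direct calculation, or equivalently the identity $p=-(1+\alpha)/2$ built into Proposition~\ref{fact:complete-asymptotic}, gives $p=-1/2$ for the horosphere and $p=0$ for the geodesic cylinder, while the snowman-type subfamily fills out $p\in(-1,-1/2)$ and the hourglass-type subfamily fills out $p\in(-1/2,0)$. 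Because Proposition~\ref{fact:complete-asymptotic} asserts that $f$ is asymptotic to an $m$-fold cover of one of these models, $f$ inherits the corresponding label.

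For the incomplete case, Theorem~\ref{thm:incomplete} directly delivers the asymptotic profile $\varphi_h(t)=h^{1+p}\Cyc_{m,n}(t)+o(h^{1+p})$ with $p=n/m\in\Q_{+}\setminus\{1\}$, together with the count of $2n$ cusps and the winding number $m$. The split between epicycloid-type ends ($p<1$) and hypocycloid-type ends ($p>1$) is then just the definition introduced in the paragraph immediately following Theorem~\ref{thm:incomplete}. The main obstacle I anticipate is in the middle step: confirming that the family of \cite{KRUY} can be reparameterized monotonically by $p$, so that the snowman-type and hourglass-type fronts really exhaust the open intervals $(-1,-1/2)$ and $(-1/2,0)$ respectively with no omissions and with the horosphere and cylinder as the unique boundary cases. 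This reduces to a careful but routine asymptotic computation using the explicit rotationally symmetric parametrizations of \cite{KRUY}, after which the corollary follows immediately.
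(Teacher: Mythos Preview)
Your handling of the incomplete case is fine and matches the paper: Theorem~\ref{thm:incomplete} supplies the pitch $p=n/m\in\Q_+\setminus\{1\}$ together with the cycloid asymptotics, and the epicycloid/hypocycloid split is then definitional.

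For the complete case, however, your route diverges from the paper's and is more laborious than necessary. In this paper, ``snowman-type'', ``horospherical'', ``hourglass-type'', and ``cylindrical'' are \emph{not} defined by asymptotic resemblance to the rotational models of Figure~\ref{fig:revolution}; they are defined algebraically via the sign of the top-term coefficient $q_{-2}$ of the Hopf differential and the orders of $|\omega|^2$, $|\theta|^2$ (Definition~\ref{def:types}). Proposition~\ref{lem:alpha-mu} then translates the four cases into ranges of the Gauss-map ratio $\alpha$: snowman iff $0<\alpha<1$, horospherical iff $\alpha=0$, hourglass iff $-1<\alpha<0$, cylindrical iff $\alpha=-1$. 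Substituting $p=-(1+\alpha)/2$ from Proposition~\ref{fact:complete-asymptotic} yields the four pitch intervals $(-1,-1/2)$, $\{-1/2\}$, $(-1/2,0)$, $\{0\}$ at once. The reference to \cite[Propositions~3.1 and~7.3]{KRUY} is there only to guarantee $\alpha\in[-1,1)$, not to classify rotational surfaces.

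Consequently your proposed step ``$f$ is asymptotic to a model, hence inherits the corresponding label'' is slightly circular: since the labels are algebraic, you would still have to check that the algebraic type of the limiting rotational model agrees with that of $f$, and that check is exactly the $\alpha$-computation above. Once you use the algebraic definitions directly, the obstacle you anticipate (monotone reparametrization of the rotational family by $p$) simply disappears. Your argument can be made correct, but the paper's route via Definition~\ref{def:types} and Proposition~\ref{lem:alpha-mu} is shorter and avoids the detour through explicit rotational profiles.
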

Snowman-type ends, horospherical ends, 
hourglass-type ends and cylindrical ends were defined in \cite{KRUY} 
using properties of the canonical $1$-forms and the Hopf differential
(see Definition~\ref{def:types}), and asymptotic behavior was not 
established there.  
But as a consequence of  Proposition~\ref{fact:complete-asymptotic} and
Corollary~\ref{cor:ends},
snowman-type ends, horospherical ends, hourglass-type ends and complete
cylindrical ends are now known to be 
asymptotic to the finite covering of a snowman, 
the horosphere, an hourglass and a cylinder, respectively 
{\rm (}see Figure~{\rm \ref{fig:revolution})}.

We will see that any WCF-end of epicycloid-type or hypocycloid-type is 
necessarily a cylindrical end, but it is not complete
(see Lemma~\ref{lem:incomplete}). 

Corollary~\ref{cor:ends} means that the shape of any end tells us what
its pitch $p$ is, and vice versa.  
For example, Figure~\ref{fig:caustic} (right) indicates the
caustic of a flat front of genus $0$ with $4$ ends 
(Figure~\ref{fig:caustic} left). 
The central end (converging to the north pole in $\partial H^3$) shown
there has four cuspidal edges.  
Since the winding number of slices of the end in this case is $1$, 
we can conclude that the pitch of the end is $p=2$ 
(cf.\ Example~\ref{exa:n-noid} in Section~\ref{sec:example}).
\begin{acknowledgement}
 The third and fourth authors would like to thank 
 Jose Antonio G\'alvez and 
 Antonio Mart\'\i{}nez for fruitful discussions 
 during their stay at Granada.
 The authors also thank  
 the referee for comments that significantly improved the results here.
\end{acknowledgement}
\endgroup
\section{Fundamental properties of regular ends of flat fronts}
\label{sec:prelim}
In this section, we shall describe fundamental properties of flat
fronts in the hyperbolic $3$-space.
See \cite{KUY1}, \cite{KUY2}, \cite{KRUY} for precise arguments 
and proofs.

\subsection*{The hyperbolic space}
The hyperbolic $3$-space $H^3$ of constant sectional curvature 
$-1$ is realized as the upper half component of the hyperboloid
of the Minkowski $4$-space $\Lor^4$ with 
inner product $\inner{~}{~}$ of signature $(-,+,+,+)$:
\begin{equation}\label{eq:hyp-lor}
   H^3 =\{
         x= (x_0,x_1,x_2,x_3)\in\Lor^4\,;\,
         \inner{x}{x}=-1,
         x_0>0\}.
\end{equation}
Identifying $\Lor^4$ with the set $\Herm(2)$ of $2\times 2$ Hermitian
matrices as
\begin{equation}\label{eq:lor-herm}
    \Lor^4\ni (x_0,x_1,x_2,x_3) \longleftrightarrow
   \begin{pmatrix}
     x_0 + x_ 3 & x_1 + \imag x_2 \\
     x_1 - \imag x_2  & x_0 - x_3
   \end{pmatrix}
   \qquad \left(\imag=\sqrt{-1}\right), 
\end{equation}
we can write
\begin{align}
  H^3 &= 
        \{X\in\Herm(2)\,;\,\det X=1,\trace X>0\} 
 \label{eq:hyp-mat}
 \\
      &= \{ uu^{*}\,|\,u\in\SL(2,\C)\} = \SL(2,\C)/\SU(2)
 \qquad (u^*=\trans{\bar{u}}).
 \nonumber
\end{align}
The complex Lie group $\SL(2,\C)$ acts isometrically on $H^3$ 
by 
\begin{equation}\label{eq:isometry}
    \iota_{u}\colon{} H^3 \ni X 
              \longmapsto u X u^* \in H^3
    \qquad (u\in\SL(2,\C)).
\end{equation}
In fact, the identity component of the isometry group of $H^3$ 
is identified with $\PSL(2,\C)=\SL(2,\C)/\{\pm 1\}$.

We consider the projection 
\begin{equation}\label{eq:proj0}
   \pi:H^3 \ni (x_0,x_1,x_2,x_3)\longmapsto \frac{1}{x_0-x_3}
   (x_1+ \imag x_2,1)\in \R^3_+ , 
\end{equation}
where $\R^3_+:=\{ (\zeta,h)\in \C\times \R\,;\, h>0\}$.
The map $\pi$ is an isometry from $H^3$ to the Poincar\'e upper 
half-space model
\begin{equation}\label{eq:upper-metric}
 \left(\R^3_+,\frac{|d\zeta|^2+dh^2}{h^2}\right).
\end{equation}
Under the parametrization as in \eqref{eq:hyp-mat}, 
we can write
\begin{equation}\label{eq:proj-mat}
   \pi(uu^*) = 
      \frac{
      \left(
          u_{11} \overline{u_{21}}  + 
          u_{12} \overline{u_{22}} , 1
      \right)}{
      u_{21}\overline{u_{21}}+u_{22}\overline{u_{22}}},
      \qquad
      \text{where}\quad
      u=\begin{pmatrix}
	  u_{11} & u_{12} \\
	  u_{21} & u_{22}
	 \end{pmatrix}\in\SL(2,\C).
\end{equation}
The ideal boundary $\partial H^3$ is identified with $\C\cup\{\infty\}$
as in \eqref{eq:asymptotic} in Appendix~\ref{app:boundary}.

\subsection*{Flat fronts}
A smooth map $f\colon{}M^2\to H^3$ from a $2$-manifold into the 
hyperbolic $3$-space is called a {\em front\/} if 
there exists a Legendrian immersion $L_f\colon{}M^2\to T_1^*H^3$
into the unit cotangent bundle of $H^3$ whose projection is $f$.
Identifying $T_1^*H^3$ with the unit tangent bundle $T_1H^3$, 
$L_f$ corresponds to the {\em unit normal vector field\/}
$\nu$ of $f$, that is, 
the immersion $(f,\nu)\colon{}M^2\to T_1H^3$ satisfies
$\inner{\nu}{\nu} = 1$ and $\inner{\nu}{df} = 0$.  
A point $x \in M^2$ where $\text{rank} (df)_x < 2$ 
is called a ({\it Legendrian}) {\it singularity\/} or 
{\it singular point\/}. 

The {\em parallel front\/} $f_t$ of a front $f$ at distance 
$t$ is given by $f_t(x)=\Exp_{f(x)}\bigl(t\nu(x)\bigr)$, 
where ``$\Exp$'' denotes the exponential map of $H^3$. 
In the model for $H^3$ as in \eqref{eq:hyp-lor}, we can write
\begin{equation}\label{eq:parallel}
   f_t = (\cosh t) f + (\sinh t) \nu, \quad
   \nu_t =(\cosh t) \nu + (\sinh t) f,
\end{equation}
where $\nu_t$ is the unit normal vector field of $f_t$.

Based on the fact that any parallel surface of a flat surface 
is also flat at regular points (i.e., non-singular points), 
we define flat fronts as follows:
A front $f\colon{}M^2\to H^3$ is called a {\em flat front\/} 
if, for each $x\in M^2$, there exists $t\in\R$ such that the
parallel front $f_t$ is a flat immersion at $x$. 
By definition, $\{f_t\}$ forms a family of flat fronts.
We assume this is the case.
As in \eqref{eq:hyp-mat}, the hyperbolic 3-space $H^3$
can be considered as a subset of $\SL(2,\C)$,
and there exist a complex structure on $M^2$ and 
a holomorphic Legendrian immersion
\begin{multline}\label{eq:hol-lift}
   \E_f:\widetilde M^2\longrightarrow \SL(2,\C)\\
   \text{such that}\quad f=\E_f\E_f^* 
   \quad \text{and}\quad
   \nu = \E_f e_3 \E_f^*\qquad 
    \left( e_3 := \begin{pmatrix}  
                 1 & \hphantom{-} 0 \\
	         0 &  -1 
	   \end{pmatrix}\right),
\end{multline}
where $\widetilde M^2$ is the universal cover of $M^2$ 
(see \cite{GMM} and \cite{KUY1}).
We call $\E_f$ the {\em holomorphic Legendrian lift\/} of the 
flat front $f$.
Here, $\E_f$ being a holomorphic Legendrian map means that 
the $\sl (2, \C)$-valued $1$-form 
$\E_f^{-1}d\E_f$ is off-diagonal, 
where $\sl(2,\C)$ is the Lie algebra of $\SL(2,\C)$
(see \cite{GMM}, \cite{KUY1}, \cite{KUY2}, \cite{KRUY}).
So, we can write
\begin{equation}\label{eq:can-form}
 \E_f^{-1}d\E_f=
  \begin{pmatrix}
   0 & \theta \\
   \omega & 0
  \end{pmatrix},
\end{equation}
for holomorphic $1$-forms $\omega$ and $\theta$ on 
$\widetilde{M}^2$. We call $\omega$ and $\theta$ the {\em canonical forms}.

The first and second fundamental forms $ds^2=\inner{df}{df}$
and $\secondff=-\inner{df}{d\nu}$ are given by 
\begin{equation}\label{fff-sff}
\begin{aligned}
 ds^2&=|\omega+\bar \theta|^2= 
   Q + \overline Q + 
      (|\omega|^2+|\theta|^2), \qquad Q=\omega\theta  , \\
 \secondff&=|\theta|^2-|\omega|^2 . 
\end{aligned}
\end{equation}
Note that
$|\omega|^2$ and $|\theta|^2$ are well-defined on $M^2$ itself, 
though $\omega$ and $\theta$ are 
generally only defined on $\widetilde M^2$. 
The holomorphic $2$-differential $Q$ appearing in the $(2,0)$-part of
$ds^2$ is defined on $M^2$, 
and is called the {\it Hopf differential\/} of $f$.
By definition, the umbilic points of $f$ equal the zeros of $Q$.  
Defining a meromorphic function on $\widetilde{M}^2$ by 
\begin{equation}\label{eq:rho-def}
   \rho=\frac{\theta}{\omega}  , 
\end{equation}
then $|\rho|\colon{}M^2\to[0,+\infty]$ is well-defined
on $M^2$, 
and $x\in M^2$ is a singular point of $f$ if and only
if $|\rho(x)|=1$.

We note that the $(1,1)$-part of the first fundamental form 
\begin{equation}\label{eq:one-one-part}
     ds^2_{1,1}=|\omega|^2+|\theta|^2
\end{equation}
is positive definite on $M^2$ because 
it is the pull-back of 
the canonical Hermitian metric of $\SL(2,\C)$ by the immersion $\E_f$. 
Moreover, $2ds^2_{1,1}$ coincides with the  pull-back of the Sasakian
metric on the unit cotangent bundle $T^*_1 H^3$ by the Legendrian 
lift $L_f$ of $f$ 
(which is the sum of the first and third fundamental forms, 
see \cite[Section 2]{KUY2} for details).  
The complex structure on $M^2$ is 
compatible with
the conformal metric $ds^2_{1,1}$.  
Note that any flat front is orientable (\cite[Theorem B]{KRUY}).
Throughout this paper, we always 
consider $M^2$ as a Riemann surface 
with this complex structure, 
for each flat front $f\colon{}M^2\to H^3$.

The two {\em hyperbolic Gauss maps\/} are defined as
\begin{equation*}
   G=\frac{E_{11}}{E_{21}}, \quad G_*=\frac{E_{12}}{E_{22}}, \qquad
 \text{where}\quad
    \E_f=(E_{ij}).
\end{equation*}
It can be shown that the hyperbolic Gauss maps are 
well-defined as meromorphic functions on $M^2$. In fact, 
geometrically, $G$ and $G_*$ represent the intersection points in the 
ideal boundary $\partial H^3 = \C \cup \{ \infty \}$ of $H^3$ of 
the two oppositely-oriented normal geodesics 
emanating from $f$ in the $\nu$ and $-\nu$ directions, 
respectively 
(see Proposition~\ref{prop:lightcone-g} in the appendix). 
In particular, parallel fronts have the same hyperbolic 
Gauss maps.
For $u\in\SL(2,\C)$, 
the change $\E_f\mapsto u\E_f$ corresponds
to the rigid motion 
$f\mapsto \iota_u\circ f=ufu^*$ 
in $H^3$ as in \eqref{eq:isometry}.
Under this change, 
the hyperbolic Gauss maps change by the M\"obius transformation:
\begin{equation}\label{eq:gauss-moebius}
   G \mapsto u\star G =
    \frac{u_{11}G+u_{12}}{%
          u_{21}G+u_{22}},\qquad
   G_*\mapsto u\star G_*=    
    \frac{u_{11}G_*+u_{12}}{%
          u_{21}G_*+u_{22}},
\end{equation}
where $u=(u_{ij})$,
in contrast to the canonical forms 
$\omega$, $\theta$ which are unchanged.
The canonical forms, the hyperbolic Gauss maps 
and the Hopf differential are related as follows:
Let $g$ and $g_*$ be holomorphic functions on the universal cover
$\widetilde M^2$ of $M^2$ such that $dg=\omega$ and $dg_*=\theta$.
Then it holds that
\begin{equation}\label{eq:schwarz}
  S(g)-S(G) = S(g_*)-S(G_*) = 2Q,\quad
   S(h) = \left\{
   \left(\frac{h''}{h'}\right)'-\frac{1}{2}\left(\frac{h''}{h'}\right)^2
   \right\}dz^2,
\end{equation}
where $z$ is a local complex coordinate and $'=d/dz$, that is, $S(\cdot)$
denotes the Schwarzian derivative with respect to $z$.
The relations in \eqref{eq:schwarz} suggest us that
$G$, $G_*$, $\omega$ and $\theta$ can be considered as pairs
$(G,\omega)$ and $(G_*,\theta)$. 
In fact, as we shall see in \eqref{eq:g-omega-repr},
the front $f$ can be represented via the pair $(G,\omega)$
or $(G_*,\theta)$.
\begin{definition}\label{def:omega-theta}
 The canonical form $\omega$ (resp.\ $\theta$) 
 is said to be {\em associated with} $G$ (resp. $G_*$).
\end{definition}
The holomorphic Legendrian lift $\E_f$ has a $\U(1)$-ambiguity,
that is, 
\[
   \E_f^{\tau}:=\E_f
      \begin{pmatrix} 
       e^{\imag\tau/2} & 0 \\
           0           & e^{-\imag\tau/2}
      \end{pmatrix}\qquad(\tau\in\R)
\]
is also a holomorphic Legendrian lift of $f$.
Under this transformation, the canonical forms and the function $\rho$
change as
\begin{equation}\label{eq:u-one-amb}
   \omega \mapsto e^{\imag\tau}\omega,\qquad
   \theta \mapsto e^{-\imag\tau}\theta,\qquad
   \rho \mapsto e^{-2\imag\tau}\rho, 
\end{equation}
in contrast to the hyperbolic Gauss maps 
$G$, $G_*$ which are unchanged.
On the other hand, the projection of 
\begin{equation}\label{eq:dual}
   \E_f^{\natural} :=
   \E_f\begin{pmatrix} 
	0 & \imag \\ \imag & 0
       \end{pmatrix}
\end{equation}
is also the same front $f$, but the unit normal
$\E_f^{\natural}e_3(\E_f^{\natural})^*=-\nu$ is reversed, where $\nu$ is
the unit normal in \eqref{eq:hol-lift}.
We call $\E_f^{\natural}$ the {\em dual\/} of $\E_f$ 
(see \cite[Remark 2.1]{KRUY}).
The hyperbolic Gauss maps $G^{\natural}$, $G^{\natural}_*$, the
canonical forms $\omega^{\natural}$, $\theta^{\natural}$
and the Hopf differential $Q^{\natural}$ are related to the original data
by 
\begin{equation*}
 (G^{\natural},\omega^{\natural})  = (G_*,\theta),\qquad
 (G^{\natural}_*,\theta^{\natural})  = (G,\omega),\qquad
 Q^{\natural}= Q.
\end{equation*}
A holomorphic Legendrian lift $\E_f$ can be expressed by 
the pair $(G,\omega)$ of the hyperbolic
Gauss map $G$ and the canonical form $\omega$, as in \cite{KUY1}:
\begin{equation}\label{eq:g-omega-repr}
  \E_f = \begin{pmatrix}
	    GC & d(GC)/\omega \\
	     C & dC/\omega
	 \end{pmatrix},\qquad\text{where}\quad
	 C = \imag \sqrt{\frac{\omega}{dG}}.
\end{equation}
We use the above formula in what follows, but, 
using the duality \eqref{eq:dual}, we could express $\E_f$
in terms of the pair $(G_*,\theta)$ as well.  
The fact that we have these two different expressions for 
$\E_f$ will play a crucial role in our 
investigation of the asymptotic behavior of WCF-ends.

Another representation formula for $\E_f$ in terms of the hyperbolic
Gauss maps is given in \cite{KUY1}:
\begin{equation}\label{eq:gauss-repr}
   \E_f = \begin{pmatrix}
	     G/\xi & \xi G_*/(G-G_*) \\
	     1/\xi & \xi/(G-G_*)
	  \end{pmatrix}
	  \qquad
	  \left(
	     \xi = \delta\exp \int_{z_0}^z \frac{dG}{G-G_*}
	  \right),
\end{equation}
where $z_0 \in M^2$ is a base point and $\delta\in\C\setminus\{0\}$ is a 
constant. Note that the choice on $\delta$ corresponds to 
the $\mathrm{U}(1)$-ambiguity, as well as to the family of parallel fronts.
The canonical form $\omega$ and the Hopf differential $Q$ are expressed
as \begin{equation}\label{eq:can-hopf}
   \omega = -\frac{dG}{\xi^2}, \qquad
    Q = -\frac{dG\,dG_*}{(G-G_*)^2}.
\end{equation}
\begin{remark}[Flat surfaces in de Sitter 3-space]
 We set 
 \begin{align*}
  S^3_1 &= 
     \{X\in\Herm(2)\,;\,\det X=-1\} 
  \\
     &= \{ ue_3u^{*}\,;\,u\in\SL(2,\C)\} = \SL(2,\C)/\SU(1,1)
 \quad 
  \left(
       e_3=\begin{pmatrix} 1 & \hphantom{-}0 \\ 0 & -1 \end{pmatrix}
  \right),
 \end{align*}
 which gives a Lorentzian space form of positive curvature
 called {\it de Sitter $3$-space}.
 A smooth map $f\colon M^2\to S^3_1$ is called a {\it spacelike front\/} if
 its unit normal vector field $\nu$ is globally defined on
 $M^2$ and gives a front in $H^3$.
 The unit normal vector field $\nu$ of
 flat fronts in $H^3$ gives spacelike flat fronts in 
 de Sitter 3-space $S^3_1$, and vice versa.
\end{remark}
\begin{remark}[A characterization of the horosphere]%
\label{rmk:G_const}
If either $G$ or $G_*$ is constant,
the Hopf differential $Q$ vanishes everywhere because of
\eqref{eq:can-hopf}.
Then the surface lies in a horosphere.
\end{remark}

\subsection*{Ends of flat fronts}
Let $f\colon{}M^2\to H^3$ be a flat front.
If $M^2$ is homeomorphic to a compact Riemann surface
$\overline M^2$ excluding a finite number of points $p_1,\dots,p_n $,
each point $p_j$ represents an {\em end\/} of $f$.
Moreover, if a neighborhood of $p_j$ is {\em biholomorphic\/} to
the punctured disc 
$D^*=\{z\in\C\,;\,0<|z|<1\}$,
then $p_j$ is called a {\em puncture-type} end.
{\it We often refer to 
the restriction of $f$ to a neighborhood $D^*$ as the end, as well.}

Puncture-type ends can appear in a flat front 
with some kinds of ``completeness'' properties:
A flat front $f\colon{}M^2 \to H^3$ is called {\em complete\/} if
there exists a symmetric $2$-tensor $T$ such that 
$T=0$ outside a compact set $C\subset M^2$ and $ds^2+T$ 
is a complete metric of $M^2$.
In other words, the set of singular points of $f$ is compact and
each divergent path has infinite length (see Definition 
\ref{def:complete} below).  
On the other hand, $f$ is called {\em weakly complete\/} 
(resp.\ {\em of finite type}) if the metric $ds^2_{1,1}$
as in \eqref{eq:one-one-part} is complete 
(resp.\ of finite total curvature).
The following fact is fundamental:
\begin{fact}[{\cite[Proposition 3.2]{KRUY}}]%
\label{fact:finite-top}
 If a flat front $f\colon{}M^2\to H^3$ 
 is  weakly complete and of finite type, 
 then there exists a compact Riemann surface $\overline M^2$
 and a finite set of points $\{p_1,\dots,p_n\}$
 such that $M^2$ is biholomorphic to
 $\overline{M}^2 \setminus \{p_1,\dots,p_n\}$.
\end{fact}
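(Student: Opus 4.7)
The plan is to reduce the statement to a classical uniformization theorem of A.~Huber. By construction in Section~\ref{sec:prelim}, the complex structure on $M^2$ is precisely the conformal class of the positive-definite Hermitian metric $ds^2_{1,1}=|\omega|^2+|\theta|^2$, which is the pull-back of the canonical metric on $\SL(2,\C)$ by $\E_f$. Under the hypotheses of the fact, weak completeness of $f$ says that $(M^2,ds^2_{1,1})$ is a metrically complete Riemannian surface, and finite type says that this same metric has finite total absolute curvature, $\int_{M^2}|K_{1,1}|\,dA_{1,1}<\infty$.

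Once the setup is rephrased in this way, I would invoke Huber's theorem (A.~Huber, \emph{Comment.\ Math.\ Helv.} \textbf{32} (1957)): every complete Riemannian $2$-manifold $N$ satisfying $\int_N K^-\,dA<\infty$ is of finite topological type, and is conformally equivalent to a compact Riemann surface $\overline N$ with finitely many points $\{p_1,\dots,p_n\}$ removed; moreover a neighborhood of each $p_j$ is biholomorphic to a punctured disc. Since $\int|K_{1,1}|\,dA_{1,1}<\infty$ trivially implies $\int K_{1,1}^-\,dA_{1,1}<\infty$, Huber's theorem applies directly to $(M^2,ds^2_{1,1})$ and produces the compactification $\overline M^2$ and puncture set $\{p_1,\dots,p_n\}$ asserted in the statement.

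The only point that requires any attention, rather than being automatic, is the verification that the conformal class of $ds^2_{1,1}$ agrees with the Riemann surface structure on $M^2$ that the rest of the paper uses; this is tautological from the paragraph after \eqref{eq:one-one-part}, so no separate argument is needed. The main potential obstacle to a self-contained exposition would be re-proving the punctured-disc statement at each end from scratch (one would have to choose isothermal coordinates near infinity and bound their distortion using the integral curvature decay), but as long as Huber's theorem is available as a black box there is nothing further to do.
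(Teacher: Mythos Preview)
Your argument via Huber's theorem is correct and is the standard route to this result. Note, however, that the present paper does not actually prove this statement: it is recorded as a Fact and attributed to \cite[Proposition 3.2]{KRUY}, so there is no in-paper proof to compare against. The reduction you describe---observing that $ds^2_{1,1}$ is complete with finite total absolute curvature and then invoking Huber---is exactly the argument one expects in the cited reference, and your identification of the conformal structure with that of $ds^2_{1,1}$ is, as you say, built into the paper's conventions.
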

We can also define {\it completeness} of an end itself: 
\begin{definition}\label{def:complete}
 An end $f\colon{}D^*\to H^3$ is 
\begin{itemize}
 \item  {\em complete\/}
	if $f$ is complete at the origin,
	that is, the set of singular points 
	does not accumulate at the origin and 
	any path in $D^*$ approaching the origin has infinite length,
	or
 \item  {\em incomplete WCF\/}
	if $ds^2_{1,1}$ in \eqref{eq:one-one-part} is  complete
	at the origin, the total curvature of $ds^2_{1,1}$ 
	on a neighborhood of the origin is finite,
	and $f$ is incomplete at the origin.
\end{itemize} 
Namely, an incomplete WCF-end is an ``incomplete, 
{\it W}eakly {\it C}omplete
end of {\it F}inite type''.
\end{definition}

\begin{fact}[{\cite{KUY2}, \cite[Proposition 3.1]{KRUY}}]%
\label{fact:complete}
 A complete end is a WCF-end.
 Conversely, a WCF-end $f\colon D^*\to H^3$
 is complete if the  singular set 
 does not accumulate at the origin.
\end{fact}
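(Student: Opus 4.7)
The plan is to establish both implications via pointwise comparison of the induced metric $ds^2=|\omega+\bar\theta|^2$ with the auxiliary conformal metric $ds^2_{1,1}=|\omega|^2+|\theta|^2$. The elementary inequality $|\omega+\bar\theta|^2\leq 2(|\omega|^2+|\theta|^2)$ gives the universal bound $ds^2\leq 2\,ds^2_{1,1}$, whereas a reverse pointwise bound $ds^2\geq c\,ds^2_{1,1}$ is available only where $|\rho|=|\theta/\omega|$ is bounded away from $1$, i.e.\ away from the singular locus.

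For the forward direction, assume $f$ is a complete end. Completeness of $ds^2_{1,1}$ at the origin is then immediate: the singular set of $f$ is compact in $D^*$, so every divergent path is eventually disjoint from it and, having infinite $ds$-length by completeness, has infinite $ds_{1,1}$-length by the universal bound. The real content, and the principal obstacle, is finite total curvature of $ds^2_{1,1}$. Writing $\omega=\hat\omega\,dz$ locally and using the harmonicity of $\log|\hat\omega|$ together with $\partial_z\partial_{\bar z}\log(1+|\rho|^2)=|\rho'|^2/(1+|\rho|^2)^2$, one computes
\[
 -K_{1,1}\,dA_{1,1} = \frac{2|\rho'|^2}{(1+|\rho|^2)^2}\,|dz|^2 ,
\]
which is twice the pullback by $\rho$ of the Fubini--Study area form on $\C\cup\{\infty\}$. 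Finite total curvature therefore amounts to $\rho$ having finite spherical area on $D^*_\varepsilon$ counted with multiplicity, which (an essential singularity being ruled out by the big Picard theorem) is equivalent to $\rho$ extending meromorphically across $z=0$. To deduce this from completeness, one exploits the fact that $ds^2$ is a complete \emph{flat} Riemannian metric on a punctured neighborhood of $0$ (the singular set being compact): Huber's theorem forces $(D^*,ds^2)$ to be parabolic at the puncture, and translating back through the holomorphic Legendrian representation \eqref{eq:g-omega-repr} and the Schwarzian relation \eqref{eq:schwarz} constrains $\omega$, $\theta$, and hence $\rho$, to have at worst pole-type behavior at $z=0$.

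For the converse, assume $f$ is a WCF-end whose singular set does not accumulate at $0$. The finite total curvature of $ds^2_{1,1}$ ensures, as just noted, that $\rho$ extends meromorphically across $z=0$, so $|\rho|(0)\in[0,\infty]$ is defined; non-accumulation of singular points rules out $|\rho|(0)=1$, and hence there exist $\varepsilon,\delta>0$ with either $|\rho|\leq 1-\delta$ or $|\rho|\geq 1+\delta$ throughout $D^*_\varepsilon$. In the first case,
\[
 ds^2 \geq (|\omega|-|\theta|)^2 = (1-|\rho|)^2|\omega|^2 \geq \delta^2|\omega|^2 \geq \tfrac{\delta^2}{2}\,ds^2_{1,1},
\]
using $ds^2_{1,1}=(1+|\rho|^2)|\omega|^2\leq 2|\omega|^2$ when $|\rho|\leq 1$; the case $|\rho|\geq 1+\delta$ is symmetric under interchange of $\omega$ and $\theta$. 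Divergent paths with infinite $ds_{1,1}$-length then have infinite $ds$-length, giving completeness of $f$ at the origin.
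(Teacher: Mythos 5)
The paper itself offers no proof of this Fact: weak completeness is quoted from \cite[Corollary~3.4]{KUY2}, the finite-type property from \cite[Proposition~3.1]{KRUY}, and the converse from \cite[Theorem~3.3]{KRUY}. Measured against that, your converse direction is a sound reconstruction: the curvature identity $-K_{1,1}\,dA_{1,1}=2|\rho'|^2(1+|\rho|^2)^{-2}|dz|^2$, the reverse bound $ds^2\ge(1-|\rho|)^2|\omega|^2\ge\tfrac{\delta^2}{2}ds^2_{1,1}$, and the conclusion are all correct. Two small points there: you assert without proof that non-accumulation of the singular set rules out $|\rho|(0)=1$, which is not automatic from $|\rho|\ne1$ on a punctured neighborhood --- it requires the open mapping theorem applied to $\rho$ (this is exactly Lemma~\ref{lem:incomplete}); and $\rho$ is in general only defined on the universal cover, with monodromy a unimodular constant, so statements about ``$\rho$ extending meromorphically'' should be phrased for $|\rho|$ or for $d\rho/\rho$. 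The forward claim that completeness of $ds^2$ gives completeness of $ds^2_{1,1}$ via $ds^2\le 2\,ds^2_{1,1}$ is also correct.

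The genuine gap is in the forward direction, at precisely the step you yourself call ``the real content'': you never prove that completeness forces $\rho$ to have finite spherical area, equivalently that $\omega$, $\theta$, $\rho$ have at worst pole-type behavior at $z=0$. The appeal to Huber's theorem does not supply this. Huber applied to $(D^*,ds^2)$ yields parabolicity of the end with respect to the conformal structure determined by the flat metric $ds^2$; but that is \emph{not} the complex structure in which $\omega$, $\theta$ and $\rho$ are holomorphic, since $ds^2$ has nonvanishing $(2,0)$-part $Q$ in the latter structure. The phrase ``translating back through \eqref{eq:g-omega-repr} and the Schwarzian relation \eqref{eq:schwarz}'' names no mechanism for converting parabolicity in one conformal structure into meromorphic extendability of $\rho$ in the other; this conversion is exactly the content of \cite[Corollary~3.4]{KUY2} and \cite[Proposition~3.1]{KRUY}, resting ultimately on the developing-map analysis of complete flat ends in \cite{GMM} (the local isometry $z\mapsto g(z)+\overline{g_*(z)}$ into $\Euc^2$ and the control of its holonomy). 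A more elementary repair does seem available within your own framework --- after discarding a neighborhood of the singular set one has $|\rho|<1$ (or $>1$, handled by the dual \eqref{eq:dual}) near the puncture, so $\log|\rho|$ is a single-valued harmonic-off-a-discrete-set function bounded above, and the classification of such functions on a punctured disc can be used to bound $\int|\rho'|^2(1+|\rho|^2)^{-2}$ --- but that argument is not the one you wrote, and as written the finite-type half of the forward implication is not established.
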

The weak completeness and the finite-type property of
a complete end are shown in \cite[Corollary 3.4]{KUY2} and 
\cite[Proposition 3.1]{KRUY} respectively.
The second assertion of Fact 
\ref{fact:complete} follows from
\cite[Theorem 3.3]{KRUY}.

\begin{fact}[{\cite{GMM}, \cite{KUY2}, \cite[Proposition 3.2]{KRUY}}]%
\label{fact:finite}
 Let $f\colon{}D^* \to H^3$ be 
 a WCF-end of a flat front. 
 Then the canonical forms $\omega$ and $\theta$ are expressed as 
 \[
     \omega = z^{\mu} \omega_1(z)\,dz,\qquad
     \theta = z^{\mu_*} \theta_1(z)\,dz
     \qquad (\mu,\mu_*\in\R, \mu+\mu_*\in\Z),
 \]
 where
 $\omega_1$ and $\theta_1$ are holomorphic functions in $z$
 which do not vanish at the origin.
 In particular,
 the function $|\rho|\colon{}D^*\to [0,+\infty]$ as in \eqref{eq:rho-def}
 can be extended  across the end\/ $0$.
\end{fact}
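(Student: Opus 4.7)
The plan is to carry out a monodromy-plus-regularity argument in three stages: first extract fractional powers from the monodromy of $\omega$ and $\theta$, next use finite total curvature to force the ratio $\rho=\theta/\omega$ to extend meromorphically, and finally combine this with weak completeness to promote the extension to the canonical forms individually.

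Since $|\omega|^2$ and $|\theta|^2$ are single-valued on $D^*$, the holomorphic 1-forms $\omega,\theta$ on $\widetilde{D}^*$ have $\U(1)$-monodromy around $z=0$. Writing these monodromies as $e^{2\pi\imag\mu}$ and $e^{2\pi\imag\mu_*}$ with $\mu,\mu_*\in\R$, the functions $\omega_1(z):=\omega/(z^\mu\,dz)$ and $\theta_1(z):=\theta/(z^{\mu_*}\,dz)$ are (after fixing branches of $z^\mu$ and $z^{\mu_*}$) single-valued holomorphic functions on $D^*$, and single-valuedness of $Q=\omega\theta$ forces $\mu+\mu_*\in\Z$. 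What remains is to show that $\omega_1,\theta_1$ extend holomorphically to $z=0$ with $\omega_1(0)\theta_1(0)\neq 0$; granting this, $|\rho(z)|=|z|^{\mu_*-\mu}|\theta_1/\omega_1|$ has a continuous limit in $[0,+\infty]$ as $z\to 0$, proving the last assertion.

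For the extension of $\rho$, write $\omega=\alpha\,dz$ and $\theta=\beta\,dz$ on a simply connected piece of $D^*$ and apply the Lagrange identity to the Gauss curvature of $ds^2_{1,1}=(|\alpha|^2+|\beta|^2)|dz|^2$:
\[
    K\,dA_{1,1}
      = -\frac{|\alpha\beta'-\alpha'\beta|^2}{(|\alpha|^2+|\beta|^2)^2}\,\imag\,dz\wedge d\bar z,
\]
which is, up to a positive constant factor, the negative of the pullback under $\rho$ of the Fubini--Study area form on $\CP^1$. Since this expression is invariant under the $\U(1)$ monodromy of $\rho$, it descends to a well-defined 2-form on $D^*$, and so the finite-type hypothesis translates directly into $\rho\colon D^*\to\CP^1$ having finite Fubini--Study area. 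A standard extension theorem (analogous to Osserman's for the Gauss map of a minimal surface of finite total curvature) then yields meromorphic extension of $\rho$ across $z=0$.

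To obtain individual extensions of $\omega_1$ and $\theta_1$, I would pass to the branched cover $z=w^N$ with $N\mu,N\mu_*\in\Z$, on which $\omega$ and $\theta$ become genuine single-valued holomorphic 1-forms and the pulled-back metric remains weakly complete and of finite type. In this single-valued setting, weak completeness rules out essential singularities of $\alpha$ or $\beta$ at $w=0$: an essential singularity would make $|\alpha|^2+|\beta|^2$ oscillate and admit a finite-length approach path (as in the model $\alpha=e^{1/w}$ along the imaginary axis), contradicting completeness. Combined with the already-established meromorphy of the ratio $\rho$, this forces $\alpha$ and $\beta$ themselves to be meromorphic at $w=0$. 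Absorbing the leading integer exponents into $\mu$ and $\mu_*$ then yields $\omega_1(0)\theta_1(0)\neq 0$ while preserving $\mu+\mu_*\in\Z$. The main obstacle is precisely this last step: finite total curvature alone is insufficient, since oscillatory essential singularities of $\alpha$ that cancel in $\rho$ would still be allowed; both WCF hypotheses must genuinely enter, and the branched-cover reduction is what makes the classical Huber--Osserman regularity framework applicable.
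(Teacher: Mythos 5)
The paper does not prove this Fact; it is imported verbatim from \cite{GMM}, \cite{KUY2} and \cite[Proposition 3.2]{KRUY}, so there is no internal proof to compare against and your proposal has to stand on its own. Your first stage is fine: unitary monodromy of $\omega,\theta$ (because $|\omega|^2,|\theta|^2$ are single-valued) gives single-valued holomorphic $\omega_1,\theta_1$ on $D^*$ with $\mu,\mu_*\in\R$, and single-valuedness of $Q$ gives $\mu+\mu_*\in\Z$; the identification of $-K\,dA_{1,1}$ with the Fubini--Study pullback under $\rho$ is also correct. The problems are concentrated in the last stage, which is where the actual content of the Fact lies.

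First, the branched cover $z=w^N$ with $N\mu,N\mu_*\in\Z$ does not exist in general: only the sum $\mu+\mu_*$ is known to be an integer, and $\mu$ itself is typically irrational --- by Proposition~\ref{prop:alpha-mu-new} one has $\mu=-\frac{1+\alpha}{1-\alpha}m-1$ with the ratio $\alpha$ an arbitrary real number in $[-1,1)$, and the surfaces of revolution of Example~\ref{ex:revolution} realize every such $\mu$. So the reduction to genuinely single-valued $1$-forms, which you describe as the step that makes the Huber--Osserman framework applicable, is not available; one must work directly with the single-valued $\omega_1,\theta_1$ and carry the factor $|z|^{2\mu}$ along. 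Second, the claim that weak completeness by itself rules out an essential singularity of $\alpha$ is false: for $\alpha=e^{1/w}$ and $\beta=w^{-2}$ the metric $(|\alpha|^2+|\beta|^2)\,|dw|^2$ is complete at $w=0$, since the $\beta$-term alone forces every divergent path to have infinite length. The argument has to run in the opposite order from the one you wrote: first use the (to-be-established) meromorphy of the ratio to see that $|\alpha|^2+|\beta|^2$ is comparable to a single term $|z|^{2\mu}|h(z)|^2$ with $h$ single-valued holomorphic, and only then invoke the classical path lemma (Osserman's Lemma in minimal surface theory): a holomorphic $h$ with an essential singularity at $0$ admits a divergent path $\gamma$ with $\int_\gamma |z|^{\mu}|h|\,|dz|<\infty$. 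That lemma is the real analytic content and is not supplied by the single model computation along the imaginary axis. Finally, the finite-area extension theorem you invoke applies to single-valued maps $D^*\to\CP^1$, whereas $\rho$ has monodromy $e^{2\pi\imag(\mu_*-\mu)}$; the single-valued object is $\theta_1/\omega_1=z^{\mu-\mu_*}\rho$, whose Fubini--Study area is not controlled by that of $\rho$, so even the meromorphic extension of the ratio needs an argument adapted to the unitary monodromy rather than a direct citation.
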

Here $|\omega|^2$ and $|\theta|^2$ are considered as conformal 
flat metrics
on $D^*_\varepsilon$ for sufficiently small $\varepsilon>0$.
The real numbers $\mu$ and $\mu_*$ are the orders of the metrics
$|\omega|^2$ and $|\theta|^2$ at the origin respectively, that is,
\begin{equation}\label{eq:order-canonical}
    \mu=\ord_0|\omega|^2,\qquad
    \mu_*=\ord_0|\theta|^2.
\end{equation}
Since $ds^2_{1,1}=|\omega|^2 + |\theta|^2$ 
in \eqref{eq:one-one-part} is complete at the origin,
it holds that 
\begin{equation}\label{eq:WC-order}
    \min \{\mu,\mu_*\}=
    \min\left\{
	  \ord_0|\omega|^2,\ord_0|\theta|^2
	\right\}\leq -1
\end{equation}
for a WCF-end.  By \eqref{fff-sff},
the order of the Hopf differential is 
\begin{equation}\label{eq:Q-omega-theta}
    \ord_0 Q = \mu +\mu_* = \ord_0|\omega|^2 + \ord_0 |\theta|^2,
\end{equation}
 where 
 $\ord_0 Q=m$ if $Q=z^m\bigl(a+o(1)\bigr)dz^2$ holds for 
 some $a\neq 0$.

The following assertion is essentially shown in the proof of 
\cite[Theorem 3.4]{KRUY}. 
However, for the sake of convenience we give a proof here.
\begin{proposition}\label{prop:parallel}
 Let $f\colon{}D^*\to H^3$ be a complete end of a flat front.
 Then the parallel front 
 $f_t$ as in \eqref{eq:parallel} is a WCF-end.  
 Conversely, for an incomplete WCF-end $f\colon{}D^*\to H^3$ 
 of a flat front, $f_t$ is a complete end
 for any $t\ne 0$.
\end{proposition}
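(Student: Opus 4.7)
The plan is to derive both assertions from a single transformation rule for the holomorphic Legendrian lift under parallel translation, combined with Facts~\ref{fact:complete} and \ref{fact:finite}.

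\emph{Step 1: Transformation under parallel translation.} Using \eqref{eq:parallel} together with $f=\E_f\E_f^{*}$ and $\nu=\E_f e_3\E_f^{*}$, I would compute
\[
  f_t=\E_f\,\mathrm{diag}(e^{t},e^{-t})\,\E_f^{*}
     =\bigl(\E_f D_t\bigr)\bigl(\E_f D_t\bigr)^{*},\qquad
  D_t:=\mathrm{diag}(e^{t/2},e^{-t/2}).
\]
Thus $\E_{f_t}:=\E_f D_t$ is a holomorphic Legendrian lift of $f_t$, and conjugating \eqref{eq:can-form} by $D_t$ yields
\[
  \omega_t=e^{t}\omega,\qquad \theta_t=e^{-t}\theta,\qquad \rho_t=e^{-2t}\rho,
\]
while the hyperbolic Gauss maps $G,G_*$ are unchanged.

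\emph{Step 2: WCF is preserved by parallel translation.} Weak completeness is immediate, since $ds^2_{1,1,t}=e^{2t}|\omega|^{2}+e^{-2t}|\theta|^{2}$ is uniformly equivalent to $ds^2_{1,1}$ with bounds depending only on $t$. For the finite-type property I would invoke Fact~\ref{fact:finite} to write $\omega=z^{\mu}\omega_{1}\,dz$ and $\theta=z^{\mu_*}\theta_{1}\,dz$ near $z=0$; then $\omega_t,\theta_t$ have the same form with the same exponents $\mu,\mu_*$ and with $\omega_1,\theta_1$ only rescaled by nonzero constants, so near $z=0$ the conformal factor of $ds^2_{1,1,t}$ equals $|z|^{2\min(\mu,\mu_*)}$ times a smooth nonvanishing function. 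The standard analysis of such conic-type ends then gives finite total curvature near the puncture, so the parallel of a WCF-end is a WCF-end. Combined with the first half of Fact~\ref{fact:complete} (complete $\Rightarrow$ WCF), this yields the first assertion.

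\emph{Step 3: Converse.} Let $f$ be an incomplete WCF-end. By the contrapositive of the second half of Fact~\ref{fact:complete} the singular set of $f$ accumulates at $z=0$; since this set coincides with $\{|\rho|=1\}$ and $|\rho|$ extends continuously across the puncture by Fact~\ref{fact:finite}, one concludes $\lim_{z\to 0}|\rho(z)|=1$. Now fix $t\ne 0$: by Step~1 the singular set of $f_t$ is $\{|\rho_t|=1\}=\{|\rho|=e^{2t}\}$, which is bounded away from $0$ because $e^{2t}\ne 1$ and $|\rho|\to 1$ at the puncture. Since Step~2 gives that $f_t$ is a WCF-end, Fact~\ref{fact:complete} now delivers that $f_t$ is complete.

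The main obstacle I foresee is Step~2, i.e.\ confirming that finite type, and not merely the pointwise asymptotics of $\omega,\theta$, is genuinely preserved under $\omega\mapsto e^t\omega$, $\theta\mapsto e^{-t}\theta$. The resolution, as sketched, is to extract from Fact~\ref{fact:finite} that the conformal factor of $ds^2_{1,1}$ near the puncture has the model form $|z|^{2\min(\mu,\mu_*)}$ up to a smooth positive factor, a structure obviously stable under multiplying $\omega$ and $\theta$ by nonzero constants.
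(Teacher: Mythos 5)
Your proposal is correct and follows essentially the same route as the paper: derive $\omega_t=e^{t}\omega$, $\theta_t=e^{-t}\theta$, $\rho_t=e^{-2t}\rho$, observe that weak completeness and finite type are preserved because the orders of $|\omega|^2$ and $|\theta|^2$ at the puncture are unchanged (the paper cites \cite[(3.2)]{KRUY} for the equivalence of finite total curvature with finiteness of these orders, where you appeal to the stability of the local model from Fact~\ref{fact:finite}), and then settle the converse by noting $|\rho(0)|=1$ for an incomplete WCF-end so that $\{|\rho_t|=1\}=\{|\rho|=e^{2t}\}$ stays away from the origin for $t\ne 0$, whence Fact~\ref{fact:complete} gives completeness.
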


\begin{proof}
 The canonical forms of the parallel front $f_t$ are 
 expressed by those of $f$ as
 \[
    \omega_t = e^{t}\omega,\qquad
    \theta_t = e^{-t}\theta,\qquad
    \rho_t = e^{-2t}\rho.
 \]
 By \eqref{eq:WC-order}, the completeness of $ds^2_{1,1}$
 is preserved by taking parallel fronts. 
 On the other hand, it follows from 
 \cite[(3.2)]{KRUY} that the finiteness of total curvature 
 of $ds^2_{1,1}$ for $f_t$ is equivalent the finiteness of orders of
 $|\omega_t|^2$ and $|\theta_t|^2$ at the end $z=0$.
 This implies that the finiteness of the total curvature of $ds^2_{1,1}$
 is also preserved by taking parallel fronts.
 By Fact~\ref{fact:complete}, if $f$ is complete, it is a weakly
 complete end of finite type.  
 Hence so is $f_t$, that is, $f_t$ is WCF.
 Conversely, if $f$ is incomplete WCF,
 $|\rho_t(0)|=e^{-2t}|\rho(0)|=e^{-2t}\neq 1$ for $t\neq 0$.
 Since the singular point $z\in D^*$ of $f_t$ is
 characterized by $|\rho_t(z)|=1$,
 the singular set of $f_t$ ($t\ne 0$) does not accumulate at the  end. 
 Hence $f_t$ ($t\ne 0$) is complete at the origin.
\end{proof}

\subsection*{Behavior of regular ends}

\begin{fact}[\cite{GMM}, \cite{KUY2}]\label{fact:regular}
 The hyperbolic Gauss maps 
 $G$, $G_*$ of a weakly complete  end $f\colon{}D^*\to H^3$
 of a flat front satisfy either
 \begin{itemize}
  \item both $G$ and $G_*$ have at most pole singularities 
        at the origin, and have the same value at the end, or
  \item both $G$ and $G_*$ have essential singularities at the origin.
 \end{itemize}
\end{fact}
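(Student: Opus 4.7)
The plan is to reduce the dichotomy to a pole-order comparison for the Schwarzian derivatives via the identities $S(G)=S(g)-2Q$ and $S(G_*)=S(g_*)-2Q$ from \eqref{eq:schwarz}, and to handle the two alternatives separately. Expanding $\omega=z^{\mu}\omega_1(z)\,dz$ and $\theta=z^{\mu_*}\theta_1(z)\,dz$ via Fact~\ref{fact:finite}, integrating to $g$ and $g_*$, and differentiating, one computes directly
\[
S(g) = -\frac{\mu(\mu+2)}{2z^2}\,dz^2 + O(z^{-1})\,dz^2,
\]
with the parallel formula ($\mu\mapsto\mu_*$) for $S(g_*)$; meanwhile $Q=\omega\theta$ has order $\mu+\mu_*\in\Z$ at $0$ by \eqref{eq:Q-omega-theta}. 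Therefore, when $\mu+\mu_*<-2$ both $S(G)$ and $S(G_*)$ have poles of order $-(\mu+\mu_*)>2$ at $0$, whereas for $\mu+\mu_*\geq -2$ both have poles of order at most $2$.

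The key elementary observation is that any nonconstant meromorphic function on a punctured neighborhood of $0$ has Schwarzian with a pole of order at most $2$ at $0$: after post-composing with a M\"obius transformation one may assume the value at $0$ is either $0$ or $\infty$, write the function as $z^{\pm k}\phi(z)$ with $\phi$ holomorphic and nonvanishing, and one calculates $S(z^{\pm k}\phi) = ((1-k^2)/(2z^2))\,dz^2 + O(z^{-1})\,dz^2$. In the regime $\mu+\mu_*<-2$ this forces both $G$ and $G_*$ to be non-meromorphic at $0$; since they are single-valued holomorphic maps on $D^*$, Casorati--Weierstra\ss{} then shows $z=0$ is an essential singularity for both, giving alternative (ii). In the regime $\mu+\mu_*\geq -2$ I would view $G$ (and $G_*$) as a ratio of two linearly independent solutions of the Fuchsian scalar ODE $y''+\tfrac12 S(G)_0\,y=0$ (resp.\ with $S(G_*)_0$), whose Frobenius expansions are $z^{\lambda_j}\cdot(\text{holomorphic})$, possibly accompanied by a resonant $z^{\lambda_j}\log z\cdot(\text{holomorphic})$ term. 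The hypothesis that $G,G_*$ are single-valued on $D^*$ forces the differences of Frobenius exponents to be integers and kills the logarithmic resonance, so both extend meromorphically across $z=0$, yielding alternative (i).

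To finish alternative (i), suppose for contradiction that $G(0)\neq G_*(0)$. Using the $\SL(2,\C)$-action \eqref{eq:gauss-moebius} one may assume both values are finite, handling the case of one being $\infty$ by a preliminary M\"obius conjugation. Then $G-G_*$ is holomorphic and nonvanishing at $0$, so the meromorphic $1$-form $dG/(G-G_*)$ is holomorphic with zero residue there, and hence $\xi=\delta\exp\int dG/(G-G_*)$ from \eqref{eq:gauss-repr} is holomorphic and nonvanishing at $0$. Substituting into $\omega=-dG/\xi^2$ of \eqref{eq:can-hopf} gives $\mu=\ord_0 dG\geq 0$, and the symmetric computation on the dual lift \eqref{eq:dual} yields $\mu_*\geq 0$. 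This contradicts the weak-completeness inequality $\min\{\mu,\mu_*\}\leq -1$ of \eqref{eq:WC-order}, so $G(0)=G_*(0)$, as claimed. The main technical delicacy lies in the Frobenius step of paragraph~2: one must use the given single-valuedness of the hyperbolic Gauss maps on the punctured disc to rule out resonant logarithmic solutions; once that is handled, paragraphs 1 and 3 reduce to routine local expansions.
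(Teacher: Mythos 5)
The paper itself offers no proof of this statement: it is imported verbatim from \cite{GMM} and \cite{KUY2}, so there is nothing internal to compare your argument against. Taken on its own terms, your architecture --- comparing the pole order of $S(G)=S(g)-2Q$ and $S(G_*)=S(g_*)-2Q$ against the universal bound that a nonconstant meromorphic germ has Schwarzian with a pole of order at most $2$, using triviality of the projective monodromy to exclude logarithmic Frobenius solutions, and deriving $G(0)=G_*(0)$ from holomorphy of $\xi$ and $\omega=-dG/\xi^2$ --- is sound, and the local computations you quote ($S(g)=-\tfrac{\mu(\mu+2)}{2z^2}\,dz^2+O(z^{-1})\,dz^2$ and $S(z^{\pm k}\phi)=\tfrac{1-k^2}{2z^2}\,dz^2+O(z^{-1})\,dz^2$) check out.

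The genuine gap is a hypothesis mismatch. The Fact is asserted for \emph{weakly complete} ends, whereas your very first step invokes Fact~\ref{fact:finite} --- the expansions $\omega=z^{\mu}\omega_1\,dz$, $\theta=z^{\mu_*}\theta_1\,dz$ with $\omega_1,\theta_1$ holomorphic and nonvanishing --- which is stated for WCF-ends and genuinely requires the finite-type hypothesis. Everything in your first two paragraphs rests on this: without finite type the coefficient of $\omega$ may itself have an essential singularity, $\mu$ and $\mu_*$ are not defined, $Q$ need not be meromorphic at $0$, and the trichotomy on $\ord_0 Q$ that drives the whole dichotomy collapses. The paper really does use the Fact beyond the WCF setting (Proposition~\ref{prop:end-limit} is stated for regular \emph{weakly complete} ends and relies on $G(0)=G_*(0)$), so as written you have proved a weaker statement than the one asserted; to close the gap you would need either a direct argument that a weakly complete end cannot have one meromorphic and one essentially singular Gauss map, or an argument that weak completeness together with meromorphy of one Gauss map already forces the finite-type structure. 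Two smaller points: your Schwarzian computations presuppose $G$ and $G_*$ nonconstant, so the horospherical case of Remark~\ref{rmk:G_const} (one Gauss map constant, $Q\equiv0$) needs a word, though it is easily handled through the dual lift \eqref{eq:dual} as in your last paragraph; and your final step does not actually need \eqref{eq:WC-order} (again a WCF statement) --- holomorphy of both $\omega$ and $\theta$ at $0$ already makes $ds^2_{1,1}=|\omega|^2+|\theta|^2$ incomplete there, which is the cleaner contradiction and survives without finite type.
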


We call the end {\em regular\/} if both $G$ and $G_*$ have at most
poles, and {\em irregular\/} otherwise.
By \eqref{eq:schwarz} and Fact~\ref{fact:finite},
we have
\begin{lemma}[\cite{GMM}, \cite{KUY2}]\label{lem:Q-regular}
 A WCF-end $f\colon{}D^*\to H^3$ of a flat front is regular if and only
 if the Hopf differential has a pole of order at most $2$ at $0$, 
that is, $\ord_0 Q\geq -2 $ holds.
\end{lemma}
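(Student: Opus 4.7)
The proof naturally revolves around the Schwarzian identity \eqref{eq:schwarz},
\[
2Q = S(g) - S(G) = S(g_*) - S(G_*),
\]
where $dg=\omega$ and $dg_*=\theta$. The strategy is to show that $S(g)$ (and $S(g_*)$) always has a pole of order at most $2$ at $z=0$, so that the pole order of $Q$ is controlled by that of $S(G)$ (resp.\ $S(G_*)$), and then to identify ``pole of order at most $2$'' for the Schwarzian with meromorphy of the hyperbolic Gauss map.

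First I would check $\ord_0 S(g)\ge -2$ unconditionally for a WCF-end. By Fact~\ref{fact:finite}, $\omega = z^{\mu}\omega_1(z)\,dz$ with $\omega_1$ holomorphic and nonvanishing at $0$, so $g''/g' = \mu/z + \omega_1'/\omega_1 = O(z^{-1})$; substituting into $S(g)=[(g''/g')' - \tfrac12(g''/g')^2]\,dz^2$ yields a pole of order at most $2$. The analogous computation handles $S(g_*)$.

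For the $(\Rightarrow)$ direction, assume $f$ is regular. Since $G$ is meromorphic at $0$, M\"obius invariance of the Schwarzian lets me assume $G(z)=z^k h(z)$ for some integer $k\ge 1$ and holomorphic nonvanishing $h$; a direct calculation then yields
\[
S(G) = \frac{1-k^2}{2z^2}\,dz^2 + O(z^{-1})\,dz^2,
\]
so $\ord_0 S(G)\ge -2$. Combining with $\ord_0 S(g)\ge -2$ via $2Q=S(g)-S(G)$ gives $\ord_0 Q\ge -2$.

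For the $(\Leftarrow)$ direction, assume $\ord_0 Q\ge -2$. Then $S(G)=S(g)-2Q$ has a pole of order at most $2$ at $0$. Writing $S(G)=P(z)\,dz^2$, classical Schwarzian theory realizes $G$ as the ratio $y_1/y_2$ of two linearly independent solutions of $y''+\tfrac12 P(z)\,y = 0$. Because $P$ has a pole of order at most $2$, the point $z=0$ is a regular singular point, and the Frobenius method produces solutions of the form $z^{\alpha_i}h_i(z)$, possibly with $\log z$ terms. Since $G$ is single-valued on $D^*$, the monodromy of $y_1/y_2$ around $0$ must be trivial, which forces $\alpha_1 - \alpha_2\in\Z$ with no logarithmic term; hence $G$ extends meromorphically across $0$. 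The same argument applied to $S(g_*)-S(G_*)=2Q$ handles $G_*$, so $f$ is regular. The main obstacle is precisely this converse step: converting a pole-order bound on $S(G)$ into meromorphy of $G$ via the regular-singular-point analysis of a second-order linear ODE, together with the single-valuedness of $G$ on $D^*$; the remaining steps are direct Schwarzian computations.
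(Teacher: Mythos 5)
Your proof is correct and follows essentially the same route the paper indicates: the paper derives this lemma in one line from the Schwarzian identity \eqref{eq:schwarz} and the normal forms of Fact~\ref{fact:finite}, deferring the details to \cite{GMM} and \cite{KUY2}, and your argument (the unconditional bound $\ord_0 S(g)\ge -2$ from $\omega=z^{\mu}\omega_1\,dz$, plus the regular-singular-point/monodromy analysis for the converse) is exactly the content of that deferred argument. The only cosmetic point is the degenerate case where $G$ or $G_*$ is constant, which is handled trivially by Remark~\ref{rmk:G_const} since then $Q\equiv 0$ and the constant Gauss map is tautologically meromorphic.
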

\begin{proposition}\label{prop:end-limit}
 Let $f\colon{}D^*\to H^3$ be a regular weakly complete
 end, and denote its hyperbolic Gauss maps by $G$ and $G_*$.
 Then
 \[
     \lim_{z\to 0}\pi\circ f(z)=
            \bigl(G(0),0\bigr)=\bigl(G_*(0),0\bigr)
 \]
 holds if $G(0)\ne \infty$,
 where $\pi\colon{}H^3\to\R^3_+$ is the projection
 as in \eqref{eq:proj0}.  
\end{proposition}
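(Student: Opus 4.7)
The plan is to combine the explicit representation formula \eqref{eq:gauss-repr} for $\E_f$ in terms of the hyperbolic Gauss maps $G$, $G_*$ and the auxiliary function $\xi = \delta\exp\int_{z_0}^{z} dG/(G-G_*)$ with the formula \eqref{eq:proj-mat} for the upper half-space projection of $uu^*\in H^3$. Plugging the matrix entries $E_{11}=G/\xi$, $E_{12}=\xi G_*/(G-G_*)$, $E_{21}=1/\xi$, $E_{22}=\xi/(G-G_*)$ into \eqref{eq:proj-mat} and clearing denominators by multiplying top and bottom by $|\xi|^2|G-G_*|^2$, one obtains the very clean identity
\[
  \pi\circ f = (\zeta,h), \qquad
  \zeta = \frac{|G-G_*|^2\,G + |\xi|^4\,G_*}{|G-G_*|^2 + |\xi|^4},\qquad
  h = \frac{|\xi|^2\,|G-G_*|^2}{|G-G_*|^2 + |\xi|^4}.
\]
Although $\xi$ is only single-valued on the universal cover of $D^*$, the composition $\pi\circ f$ is single-valued on $D^*$, so these formulas produce well-defined functions of $z\in D^*$ for any choice of branch.

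The next step is to extract two structural features from the formulas. First, $\zeta$ is a convex combination $\lambda G+(1-\lambda)G_*$ with weight $\lambda = |G-G_*|^2/(|G-G_*|^2+|\xi|^4)\in[0,1]$. Second, the AM--GM inequality gives $|G-G_*|^2+|\xi|^4\ge 2|\xi|^2\,|G-G_*|$, so that
\[
  0\le h \le \frac{|G-G_*|}{2}.
\]
Both bounds are uniform in $|\xi|$, which means that all the potentially awkward monodromy of $\xi$ around the puncture plays no role; only the behavior of $G$ and $G_*$ near $z=0$ matters.

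The final step is to invoke regularity. By Fact~\ref{fact:regular}, both $G$ and $G_*$ extend meromorphically across $z=0$ and agree there; under the hypothesis $G(0)\ne\infty$ this common value $G(0)=G_*(0)$ is finite. Hence $|G(z)-G_*(z)|\to 0$ as $z\to 0$, which forces $h(z)\to 0$ by the bound above. Meanwhile, $\zeta(z)$ is a convex combination of $G(z)$ and $G_*(z)$, both of which converge to $G(0)$, so $\zeta(z)\to G(0)$ as well. This yields the asserted limit $\pi\circ f(z)\to (G(0),0)=(G_*(0),0)$.

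The only conceptual obstacle is the multivaluedness of $\xi$ on $D^*$, and the key observation dissolves it: since both $\zeta$ and $h$ depend on $\xi$ only through $|\xi|^2$, and our estimates are uniform in $|\xi|$, no analysis of the monodromy of $\xi$ around the end is required. The rest is bookkeeping with the representation formulas, which is why no differential-geometric input beyond the two facts already in the excerpt (the representation \eqref{eq:gauss-repr} and the regularity statement in Fact~\ref{fact:regular}) is needed.
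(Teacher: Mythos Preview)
Your proof is correct and follows essentially the same route as the paper's own argument: both plug the representation \eqref{eq:gauss-repr} into the projection formula \eqref{eq:proj-mat}, use AM--GM to obtain $h\le |G-G_*|/2$, and observe that $\zeta$ is a convex combination of $G$ and $G_*$. Your version is slightly more streamlined in that you first derive the closed-form expressions for $\zeta$ and $h$ in terms of $G$, $G_*$, $|\xi|$ before analyzing them, and you make the monodromy issue (and its irrelevance) explicit, whereas the paper works directly with $|E_{21}|^2$ and $|E_{22}|^2$; but the mathematical content is identical.
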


\begin{proof}
 The proof of \cite[Lemma 3.10]{KUY2}
 applies to weakly complete ends as well.
 So we have
 $G(0)=G_*(0)(=a)$, where $a \in \C\cup\{\infty\}$.
 By a suitable rigid motion in $H^3$, we may assume $a \neq\infty$.
 In this case, we can write
 \[
     G = a + \psi(z),\quad
     G_* = a+\psi_*(z),
    \qquad \bigl(\psi(0)=\psi_*(0)=0\bigr),
 \]
 where $\psi(z)$, $\psi_*(z)$ are holomorphic
 functions defined on a sufficiently small closed disc 
 $\{|z|\le \varepsilon\}$.
 We write $\pi\circ f = (\zeta,h)$.
 Then 
 by \eqref{eq:gauss-repr}, we have
\[
   \frac{1}{h}=|E_{21}|^2+|E_{22}|^2=
   \frac{1}{|\xi|^2}+\frac{|\xi|^2}{|G-G_*|^2}
   \ge \frac{2}{|G-G_*|}\longrightarrow +\infty
   \qquad (z\to 0).
\]
 In particular, $h\to 0$ as $z\to 0$.
 On the other hand, we have
 \begin{align*}
    \zeta
      &= \frac{E_{11}\overline E_{21}+E_{12}\overline{E_{22}}}
      {|E_{21}|^2+|E_{22}|^2}
       = 
    \frac{G|E_{21}|^2+G_*|E_{22}|^2}
      {|E_{21}|^2+|E_{22}|^2}\\
     &=\frac{(a+\psi(z))|E_{21}|^2+
            (a+\psi_*(z))|E_{22}|^2}
             {|E_{21}|^2+|E_{22}|^2}
     =a + 
         \frac{\psi(z)|E_{21}|^2+
         \psi_*(z)|{E_{22}}|^2}
             {|E_{21}|^2+|E_{22}|^2}.
 \end{align*}
 Thus we have
 \[
      |\zeta -a|\le  \max_{|z|\le \varepsilon}
      \{|\psi(z)|,|\psi_*(z)|\}.
 \]
 The right-hand side tends to zero as $\varepsilon \to 0$, hence 
 the left-hand side $|\zeta -a|$ converges to zero as $z\to 0$.
  This completes the proof.
\end{proof}

From now on, we consider a regular WCF-end $f\colon{}D^*\to H^3$ of a
flat front.
By a rigid motion, we may assume $G(0)(=G_*(0)) \ne \infty$.
In the case where $G$ and $G_*$ are both non-constant
(cf.\ Remark \ref{rmk:G_const}),
we have the expressions
\begin{equation}\label{eq:gauss-normal0}
\begin{aligned}
   G(z)&=a+b_1 z^{m_1}+o(z^{m_1}), \\
 G_*(z)&=a+b_2 z^{m_2}+o(z^{m_2}),
\end{aligned}
   \qquad \bigl(a=G(0)=G_*(0),~ b_1,b_2\in \C\setminus\{0\}\bigr)
\end{equation}
on a neighborhood of $z=0$. 
We set
\begin{equation}\label{eq:multiplicity}
   m=\min\{m_1,m_2\}=\min\{\ord_0\,G'(z),~\ord_0\,G'_*(z)\}+1,
\end{equation}
which is called the {\it multiplicity\/} of the end
$f$.
In the case where one of $G$, $G_*$ is constant, 
the {\it multiplicity\/} of the end is defined to be the 
ramification order of whichever of $G$ or $G_*$ is nonconstant.
The multiplicity $m$ of the end has the following important property.

\begin{fact}[\cite{GMM} and \cite{KUY2}]
 Let $f:D^*\to H^3$ be a complete regular end.
 Then the multiplicity $m$ of $f$ is equal to $1$
 if and only if $f(D_{\varepsilon}^*)$ 
 is properly embedded for a sufficiently small $\varepsilon>0$.
\end{fact}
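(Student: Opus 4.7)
The plan is to study the horospherical slices of $\pi\circ f$ in the Poincar\'e upper half-space model and then invoke a winding-number argument. The key input is Proposition~\ref{fact:complete-asymptotic}: after a rigid motion of $H^3$ arranging $a:=G(0)=G_*(0)=0$, the image $\pi\circ f(D^*_\varepsilon)$ is foliated (for $\varepsilon$ small) by the horospherical slices
\[
  \varphi_h(t)= ce^{\imag m t}h^{1+p}+o(h^{1+p}), \qquad t\in[0,2\pi),
\]
with $c\ne 0$ and $p\in(-1,0]$, where $m$ is the multiplicity of the end.

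For the implication $m=1\Rightarrow$ properly embedded, properness is immediate from Proposition~\ref{prop:end-limit}, which yields $\pi\circ f(z)\to(0,0)$ as $z\to 0$. For embeddedness, when $m=1$ the leading term $t\mapsto c e^{\imag t}h^{1+p}$ is an embedded circle; combined with uniform control of the remainder $o(h^{1+p})$ and its $t$-derivative---furnished by Fact~\ref{fact:finite} together with holomorphicity of $G$ and $G_*$ at $0$---this shows that each $\varphi_h$ is an embedded Jordan curve for $h$ sufficiently small. Since distinct horospheres give disjoint slices, $f|_{D^*_\varepsilon}$ is an embedding onto its image.

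For the reverse implication $m\ge 2\Rightarrow$ not properly embedded, the closed curve $\varphi_h\colon S^1\to\C\setminus\{0\}$ has winding number $m$ about the origin, and this winding number is stable under the $o(h^{1+p})$ perturbation for all sufficiently small $h$ (the curve lies in an annulus around $0$ whose inner and outer radii are comparable to $h^{1+p}$). If $\varphi_h$ were embedded, the Jordan curve theorem would force its winding number about any point in its bounded complementary region to equal $\pm 1$; since $0$ is such a point yet the winding number is $m\ge 2$, the curve $\varphi_h$ must self-intersect. This propagates to self-intersections of $\pi\circ f$, and hence of $f$, in every punctured neighborhood of $0$, ruling out embeddedness.

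The main obstacle is transferring the leading-order asymptotic into a genuine geometric statement. In the $m=1$ case this amounts to verifying that the $t$-derivative of the remainder is uniformly $o(h^{1+p})$ as $h\to 0$, so that the true slice is $C^1$-close to a circle, not merely $C^0$-close; this is where the precise form of the expansions \eqref{eq:gauss-normal0} and the representation \eqref{eq:gauss-repr} are used. The $m\ge 2$ half is more robust, since the winding-number obstruction is purely topological and therefore stable under the small perturbations coming from the error term.
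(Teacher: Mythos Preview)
The paper does not supply its own proof of this Fact: it is simply quoted from \cite{GMM} and \cite{KUY2} and used as background in Section~\ref{sec:prelim}. So there is no ``paper's proof'' to compare against directly.

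Your argument is nonetheless a legitimate way to recover the statement from the machinery developed later in the paper. A few remarks on how it sits relative to the paper's logical structure. First, you are invoking Proposition~\ref{fact:complete-asymptotic}, which in this paper is proved only in Section~\ref{sec:main} as a corollary of Theorems~\ref{thm:non-cyl} and \ref{thm:cyl}; those proofs do not use the embeddedness Fact, so there is no circularity, but you are effectively giving a forward-referencing proof of a Section~\ref{sec:prelim} statement using Section~\ref{sec:main} results. Second, the original proofs in \cite{GMM} and \cite{KUY2} proceed more directly from the representation \eqref{eq:gauss-repr} and the normalized expansions of $G$, $G_*$, without first packaging everything into the pitch-based asymptotic \eqref{eq:asymp-complete}; your route is conceptually cleaner once Proposition~\ref{fact:complete-asymptotic} is in hand, at the cost of depending on heavier machinery.

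On the technical side your proof is essentially correct. The $m\ge 2$ direction is robust, as you say: the winding number of $\varphi_h$ about $0$ is $m$ for all small $h$, and an embedded closed curve in $\C$ has winding number in $\{-1,0,1\}$ about every point off its image. For the $m=1$ direction, the $C^1$ control you flag is indeed the only delicate point; it is available because the expansions underlying Proposition~\ref{fact:complete-asymptotic} come from the explicit holomorphic formula \eqref{eq:g-omega-repr} with $G$, $\omega$ having finite-order expansions at $0$ (Fact~\ref{fact:finite}), so the $o(h^{1+p})$ remainder and its $t$-derivative are uniformly small. One small addendum: you should also note that the change of variables $z\leftrightarrow (t,h)$ implicit in Proposition~\ref{fact:complete-asymptotic} is a diffeomorphism on $D^*_\varepsilon$ (it is essentially polar coordinates after the normalization of Lemma~\ref{lem:normalized-omega}), so embeddedness of the $(t,h)$-parametrized image is equivalent to embeddedness of $f$ itself.
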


Recall that (see \cite[(7.1)]{KRUY}) the constant 
\begin{equation}\label{eq:gauss-ratio}
\alpha: = \begin{cases}
    (dG_*/dG)(0) & (\text{if $|(dG_*/dG)(0)|\le 1$}), \\
    (dG/dG_*)(0) & (\text{if $|(dG_*/dG)(0)|>1$})
   \end{cases}  
\end{equation}
is called the {\it ratio of the Gauss maps}.
As seen in \cite[Propositions 3.1 and 7.3]{KRUY},
$\alpha$ is a real number which is not equal to $1$, 
so $\alpha\in [-1,1)$.

In particular, if $\alpha\neq 0$, the ramification orders of $G$ and
$G_*$ coincide, and are equal to the multiplicity of the end.

To fix the expression of the ratio of Gauss maps uniquely, 
we wish to distinguish the pairs $(G,\omega)$ and $(G_*,\theta)$
of $f$
(given just before Definition \ref{def:omega-theta})
as follows:
\begin{definition}
 The pair $(G,\omega)$ {\rm (}resp. $(G_*,\theta)${\rm)}
 is  a {\it dominant pair\/}
 with respect to the regular end $z=0$ if
 $|(dG_*/dG)(0)|\le 1$ {\rm(}resp. $|(dG_*/dG)(0)|\ge 1${\rm)}.
 Moreover, $(G,\omega)$ {\rm (}resp. $(G_*,\theta)${\rm )} is called
 the {\it strictly dominant pair\/} if $|(dG_*/dG)(0)|< 1$ 
 {\rm (}resp. $|(dG_*/dG)(0)|> 1${\rm)}.
\end{definition}
\begin{remark}\label{rem:dominant-pair}
 For a regular WCF-end,
 $(G,\omega)$ and $(G_*,\theta)$
 are both dominant if and only if $\alpha=-1$, 
 which corresponds to a regular cylindrical end 
 (see Definition~\ref{def:types} and Proposition~\ref{lem:alpha-mu} below).
 If $(G_*,\theta)$ is strictly dominant, $(G,\omega)$ is not strictly dominant. 
 In this case, by taking the dual as in  \eqref{eq:dual}, 
 we can exchange the roles of $(G,\omega)$ and $(G_*,\theta)$.
 Thus, we may always assume that $(G,\omega)$ is a dominant pair.
 Then it holds that
 \begin{equation}\label{eq:remark_new}
     \alpha=(dG_*/dG)(0),\qquad m=m_1.
 \end{equation}
 In particular, we have the
 expressions
 \begin{equation}\label{eq:gauss-normal}
  \begin{aligned}
     G(z)&=a+c z^m+o(z^m), \\
     G_*(z)&=a+\alpha c z^m+o(z^m),
  \end{aligned}
   \qquad \bigl(c\ne 0 \bigr).
 \end{equation}
\end{remark}
We shall use frequently these expressions, or more normalized forms of
them.

The following assertion holds:
\begin{proposition}\label{prop:alpha-mu-new}
 Let $(G,\omega)$ be a dominant pair.
 Then the ratio of Gauss maps $\alpha$ and the multiplicity $m$ of the
 end satisfy the following identity
 \begin{equation}\label{eq:alpha-mu}
     \mu=-\frac{1+\alpha}{1-\alpha}m-1(\le -1),\qquad
     \text{that is,}\qquad
     \alpha = \frac{1+\mu+m}{1+\mu-m},
 \end{equation}
 where $\mu=\ord_0 |\omega|^2$.
 In particular, $\mu_*=\ord_0 |\theta|^2$ satisfies
 \begin{equation}\label{eq:mu-mu*}
  \mu+\mu_*\ge -2,\quad \mu_*\ge -1,\quad (\mu\le -1). 
 \end{equation}
\end{proposition}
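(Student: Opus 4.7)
The plan is to compute $\mu=\ord_0|\omega|^2$ directly using the representation \eqref{eq:gauss-repr}--\eqref{eq:can-hopf} of $\omega=-dG/\xi^2$ in terms of the hyperbolic Gauss maps. After the rigid motion normalizing $a:=G(0)=G_*(0)$ to a finite value, Remark~\ref{rem:dominant-pair} furnishes the expansions \eqref{eq:gauss-normal}, so the argument reduces to an asymptotic analysis of $\xi$ and $\omega$ near $z=0$.

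The key step is to extract the leading behavior of $\xi=\delta\exp\int dG/(G-G_*)$. Substituting \eqref{eq:gauss-normal} yields
\begin{equation*}
    \frac{dG}{G-G_*} \;=\; \frac{m}{1-\alpha}\,\frac{dz}{z} \;+\; (\text{holomorphic }1\text{-form}),
\end{equation*}
where crucially $1-\alpha\ne 0$ because $\alpha\in[-1,1)$. Exponentiating gives $\xi=C\,z^{m/(1-\alpha)}(1+o(1))$ on the universal cover, with $C\ne 0$. Plugging into $\omega=-dG/\xi^2$ and collecting powers of $z$ leads to
\begin{equation*}
   \mu \;=\; (m-1)-\frac{2m}{1-\alpha} \;=\; -\frac{1+\alpha}{1-\alpha}\,m-1,
\end{equation*}
which is the first identity of \eqref{eq:alpha-mu}; solving algebraically for $\alpha$ yields the second. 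The bound $\mu\le -1$ is automatic since $(1+\alpha)/(1-\alpha)\ge 0$ and $m\ge 1$.

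The first inequality of \eqref{eq:mu-mu*}, $\mu+\mu_*\ge -2$, is then immediate from $\mu+\mu_*=\ord_0 Q$ in \eqref{eq:Q-omega-theta} combined with Lemma~\ref{lem:Q-regular}. For $\mu_*\ge -1$, I would apply the same computation to the dual pair $(G_*,\theta)$ via \eqref{eq:dual}: when $\alpha\ne 0$, the ramification orders of $G$ and $G_*$ agree and a symmetric manipulation gives $\mu_*=(1+\alpha)m/(1-\alpha)-1\ge -1$, with equality exactly at the cylindrical value $\alpha=-1$; when $\alpha=0$, the map $G_*$ ramifies to some order $m_2>m$, so $dG_*/(G_*-G)$ is already holomorphic at $0$ and the dual of $\xi$ is nonvanishing, from which one reads off $\mu_*=m_2-1\ge m\ge 0$ directly. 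The main obstacle is precisely this boundary case $\alpha=0$, where the two Gauss maps have distinct ramification orders and the symmetric formula for $\mu_*$ must be replaced by the separate direct computation; a further minor subtlety is that $\xi$ and $\omega$ are generally multi-valued on $M^2$ since the exponent $m/(1-\alpha)$ is not an integer, so $\mu$ has to be read from the well-defined metric $|\omega|^2$ on $D^*$ rather than from $\omega$ itself.
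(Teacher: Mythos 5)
Your proof is correct and follows essentially the same route as the paper: substitute the normalized expansions \eqref{eq:gauss-normal} into \eqref{eq:gauss-repr} and \eqref{eq:can-hopf} to read off $\mu$ from the leading power of $\xi$, then use $\ord_0 Q=\mu+\mu_*\ge -2$ for the second assertion. The only (harmless) detour is your derivation of $\mu_*\ge -1$ via a separate computation for the dual pair $(G_*,\theta)$ with a case split at $\alpha=0$; the paper obtains it in one line from $\mu_*\ge -2-\mu\ge -1$, which already follows from the two facts you establish.
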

\begin{proof}
 Substituting \eqref{eq:gauss-normal}
 into \eqref{eq:gauss-repr} and \eqref{eq:can-hopf}, 
 and noticing that $\alpha\neq 1$, we have  \eqref{eq:alpha-mu}.
 The second assertion follows from
 \eqref{eq:Q-omega-theta} and Lemma \ref{lem:Q-regular}.
\end{proof}

Since the Hopf differential $Q$ is written as in
\eqref{eq:can-hopf}, 
it has the following expansion 
\begin{equation*}
     Q=\frac{1}{z^2}\left( 
           \frac{-m^2\alpha}{(1-\alpha)^2}+o(1)
		    \right)dz^2,
\end{equation*}
where $o(1)$ is a term having order
higher than $1$ as $z\to 0$.
The term
\begin{equation}\label{eq:Q-top}
    q_{-2}:=\frac{-m^2\alpha}{(1-\alpha)^2}
\end{equation}
is called the {\it top-term coefficient\/} of  $Q$.
\begin{definition}[cf. {\cite[Definition 7.1]{KRUY}}]%
\label{def:types}
 A regular WCF-end $f\colon{}D^*\to H^3$ of a flat front is called
 \begin{enumerate}
 \item {\em horospherical\/} if $q_{-2}=0$, that is, $\ord_0 Q\geq -1$,
  \item of {\em snowman-type\/} if $q_{-2}<0$, 
  \item of {\em hourglass-type\/} 
	if $q_{-2}>0$ and $\ord_0|\omega|^2 \ne \ord_0|\theta|^2$, or
  \item {\em cylindrical\/} 
	if  $\ord_0|\omega|^2=\ord_0|\theta|^2$. 
	(In this case, $q_{-2}$ is positive. See 
	  Corollary \ref{cor:alpha-mu} below.)
 \end{enumerate}
\end{definition}
These types of ends are characterized as follows:

\begin{proposition}\label{lem:alpha-mu}
 Let $f\colon{}D^*\to H^3$ be a regular WCF-end of a flat front
 and $(G,\omega)$ a dominant pair.
 Then the end is
 \begin{enumerate}
  \item horospherical if and only if $\alpha=0$, that is, $\mu=-m-1$,
  \item snowman-type if and only if $0<\alpha<1$, that is, $\mu<-m-1$,
  \item hourglass-type if and only if $-1<\alpha<0$, that is,
	$-m-1<\mu<-1$, and
  \item cylindrical if and only if $\alpha=-1$, that is, $\mu=-1$.
 \end{enumerate}
\end{proposition}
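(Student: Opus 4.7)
The plan is to read off all four characterizations from a single source of information: the explicit identity
\[
  \mu = -\frac{1+\alpha}{1-\alpha}\,m - 1
\]
of Proposition~\ref{prop:alpha-mu-new}, combined with the top-term expression $q_{-2} = -m^2\alpha/(1-\alpha)^2$ from \eqref{eq:Q-top}. Since $(G,\omega)$ is a dominant pair we have $\alpha\in[-1,1)$ and $m\geq 1$, so the factors $m^2$ and $(1-\alpha)^2$ are strictly positive. Consequently the sign of $q_{-2}$ is exactly the opposite of the sign of $\alpha$, and the map $\alpha \mapsto -\tfrac{1+\alpha}{1-\alpha}\,m - 1$ is strictly decreasing on $(-1,1)$, taking the value $-m-1$ at $\alpha=0$ and tending to $-1$ as $\alpha\to -1$. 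These two observations immediately handle the horospherical case ($q_{-2}=0 \iff \alpha=0 \iff \mu=-m-1$) and the snowman case ($q_{-2}<0 \iff 0<\alpha<1 \iff \mu<-m-1$).

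For the remaining two cases, both hourglass-type and cylindrical require $q_{-2}>0$, hence $\alpha<0$, so $\alpha\in[-1,0)$. To separate them by the condition $\mu=\mu_*$, I would compute $\mu_*$ by combining \eqref{eq:Q-omega-theta} with the observation that, when $\alpha\neq 0$, the coefficient $q_{-2}$ is nonzero and therefore $\ord_0 Q = -2$. This gives $\mu+\mu_*=-2$ and thus
\[
  \mu_* = -2-\mu = \frac{(1+\alpha)m}{1-\alpha} - 1.
\]
The equality $\mu=\mu_*$ then collapses to $(1+\alpha)m/(1-\alpha)=0$, which (since $m\geq 1$) forces $\alpha=-1$. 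This pins down the cylindrical case as $\alpha=-1$, for which the formula gives $\mu=-1$, and leaves the hourglass case as $-1<\alpha<0$, where monotonicity yields $-m-1<\mu<-1$.

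I do not expect a real obstacle. The only point that needs explicit mention is that $\ord_0 Q=-2$ whenever $\alpha\neq 0$, which is immediate from \eqref{eq:Q-top} since a nonzero top coefficient at $z^{-2}$ prevents any further cancellation. All remaining converse directions are handled automatically because the map $\alpha \mapsto \mu$ is a bijection between $[-1,1)$ and $(-\infty,-1]$, so the four $\alpha$-ranges listed in (1)--(4) exhaust $[-1,1)$ disjointly and correspond bijectively to the four $\mu$-ranges stated, which is precisely the equivalence we need.
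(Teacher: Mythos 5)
Your proof is correct and takes essentially the same route as the paper's, which likewise reads everything off the identity $\mu=-\frac{1+\alpha}{1-\alpha}m-1$, the sign of $q_{-2}=-m^2\alpha/(1-\alpha)^2$, and the relation $\mu_*=-2-\mu$ valid when $\alpha\neq0$. One small wrinkle: asserting up front that a cylindrical end has $q_{-2}>0$ quotes Corollary~\ref{cor:alpha-mu}, which is itself deduced from this proposition; it is cleaner to note that \eqref{eq:mu-mu*} gives $\mu\le-1\le\mu_*$, so $\mu=\mu_*$ already forces $\mu=\mu_*=-1$ and hence $\alpha=-1$ by your monotonicity argument.
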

\begin{proof}
 If $\alpha\neq 0$, $\ord_0 Q=-2$ because of \eqref{eq:Q-top},
 and 
 \begin{equation}\label{eq:ord-dual}
    \ord_0|\theta|^2=-2-\ord_0|\omega|^2=
       -2-\mu  =-\frac{\alpha+1}{\alpha-1}m-1,
 \end{equation}
 because of \eqref{eq:Q-omega-theta} and \eqref{eq:alpha-mu}.
 Then the conclusion follows.
\end{proof}%
\begin{corollary}\label{cor:alpha-mu}
 If a regular WCF-end of a flat front is cylindrical, then
 it holds that
 \[
     q_{-2}>0,\qquad\text{and}\qquad
     \ord_0|\omega|^2=\ord_0|\theta|^2=-1.
 \]
\end{corollary}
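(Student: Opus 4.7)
The plan is to extract both conclusions directly from Proposition~\ref{lem:alpha-mu} and Proposition~\ref{prop:alpha-mu-new} (formulas \eqref{eq:alpha-mu} and \eqref{eq:Q-top}), together with \eqref{eq:ord-dual}; essentially no new ideas are needed, only a bookkeeping of cases once cylindricity is translated into a statement about $\alpha$.

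First I would set up: let $f\colon D^*\to H^3$ be a regular WCF-end of a flat front which is cylindrical, and, after taking the dual if necessary, assume $(G,\omega)$ is a dominant pair (using Remark~\ref{rem:dominant-pair}). By Proposition~\ref{lem:alpha-mu}(4), cylindricity is equivalent to $\alpha=-1$, and then by \eqref{eq:alpha-mu} we have
\[
\mu = -\frac{1+\alpha}{1-\alpha}m-1 = -1.
\]

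Next, for the orders of the canonical forms: since $\alpha=-1\neq 0$, formula \eqref{eq:ord-dual} applies and gives
\[
\ord_0|\theta|^2 = -2-\mu = -2-(-1) = -1,
\]
so $\ord_0|\omega|^2=\ord_0|\theta|^2=-1$, as claimed. This also confirms consistency with the very definition of cylindrical.

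Finally, for $q_{-2}$: substituting $\alpha=-1$ into \eqref{eq:Q-top} yields
\[
q_{-2} = \frac{-m^2\alpha}{(1-\alpha)^2} = \frac{m^2}{4},
\]
which is strictly positive because $m\geq 1$ is the multiplicity of the end. The only point to be careful about is the case where one of $G$ or $G_*$ is constant (Remark~\ref{rmk:G_const}), but then $Q\equiv 0$ by \eqref{eq:can-hopf}, which forces $\ord_0 Q = +\infty$, contradicting $\mu+\mu_*=-2$ from \eqref{eq:Q-omega-theta}; hence this degenerate case cannot occur for a cylindrical end, and the computation above is valid. No step is really an obstacle here — the whole assertion is a direct corollary once one notices that in \eqref{eq:Q-top} the numerator flips sign precisely when $\alpha<0$, and $\alpha=-1$ maximizes the symmetry between $\omega$ and $\theta$.
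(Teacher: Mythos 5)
Your proof is correct and follows the paper's own argument almost verbatim: the paper's proof of this corollary is literally ``substitute $\alpha=-1$ into \eqref{eq:Q-top} and \eqref{eq:ord-dual},'' which is exactly what you do after invoking Proposition~\ref{lem:alpha-mu}(4). Your extra remark excluding the case of a constant Gauss map is harmless (and the exclusion is indeed valid, since a cylindrical WCF-end has $\ord_0|\omega|^2=\ord_0|\theta|^2\le-1$ by \eqref{eq:WC-order}, so $Q$ cannot vanish identically), though the phrase ``contradicting $\mu+\mu_*=-2$'' slightly begs the question; the cleaner statement is that $\ord_0 Q=\mu+\mu_*$ is finite.
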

\begin{proof}
 Substitute $\alpha=-1$ into \eqref{eq:Q-top} and \eqref{eq:ord-dual}.
\end{proof}

\begin{example}[Flat fronts of revolution]
 \label{ex:revolution}
 Take a positive integer $m$ and $\alpha\in [-1,1)$,
 and set $(G,G_*)=(z^m,\alpha z^m)$.
 Then by \eqref{eq:gauss-repr}, we have a flat front
 $f\colon{}\C\setminus\{0\}\to H^3$ whose canonical forms 
 are given by
 \[
     \omega = -\frac{m}{\delta^2} z^\mu\,dz,\qquad
     \theta = \frac{m\alpha\delta^2}{(1-\alpha)^2} z^{-2-\mu}\,dz
     \qquad
     \left(
          \mu=\frac{\alpha+1}{\alpha-1}m-1
     \right),
 \]
 where $\delta$ is a constant as in \eqref{eq:gauss-repr}.
 The front $f$ is 
 the $m$-fold cover of the {\em hourglass\/} 
 (resp.\ {\em the snowman}\/)
 if $-1<\alpha<0$ (resp. $0<\alpha<1$).
 When $\alpha=0$, $f$ gives the horosphere
 (resp.\ the $m$-fold branched cover of the {\em horosphere\/}
 with branch point $z=\infty$)
 if $m=1$ (resp.\ $m\geq 2$).
 In the case of $\alpha=-1$, 
 $f$ gives the $m$-fold cover of a {\em cylinder\/} if $|\delta|^2\neq 2$.
 When $\alpha=-1$ and $|\delta|^2=2$, all points are singularities
 of $f$, and the image $f(\C\setminus\{0\})$ is the geodesic
 joining $0$ and $\infty$.
 Here, we identify $\partial H^3$ with  $\C\cup\{\infty\}$
 as in \eqref{eq:asymptotic} in the appendix.
 In all cases, 
 $f$ is a flat front of revolution 
 whose axis is the geodesic joining $0$ and $\infty\in\partial H^3$,
 see Figure~\ref{fig:revolution} in the introduction.

 Conversely, any flat front of revolution whose axis 
 is the geodesic joining $0$ and $\infty\in\partial H^3$ is 
 obtained in such a way.
 In particular, one can choose the complex coordinate $z$ such that
 $G=z^m$, and the canonical form 
 $\omega=cz^{\mu}\,dz$, where $c$ is a non-zero constant.
\end{example}

\subsection*{Behavior of singular points on a regular WCF-end}
\begin{lemma}\label{lem:incomplete}
 A WCF-end $f\colon{}D^*\to H^3$ of a flat front is 
 cylindrical if and only if $\rho(z)=\theta/\omega$
 as in \eqref{eq:rho-def} is a nonvanishing holomorphic function
 near $z=0$.
 On the other hand, a regular WCF-end $f$ is 
 incomplete if and only if
 it is cylindrical and $|\rho(0)|=1$.
\end{lemma}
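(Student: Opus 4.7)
My plan is to reduce the whole lemma to the canonical-form expansion of Fact~\ref{fact:finite}. That fact writes $\omega = z^\mu\omega_1(z)\,dz$ and $\theta = z^{\mu_*}\theta_1(z)\,dz$ near the puncture with $\omega_1(0),\theta_1(0)\neq 0$ and $\mu+\mu_*\in\Z$. Consequently
\[
  \rho(z) \;=\; z^{\mu_*-\mu}\,\frac{\theta_1(z)}{\omega_1(z)}, \qquad
  |\rho(z)| \;=\; |z|^{\mu_*-\mu}\Bigl|\tfrac{\theta_1(z)}{\omega_1(z)}\Bigr|.
\]
Once I have this formula, both equivalences become a matter of comparing $\mu$ with $\mu_*$ and reading off the continuous extension of $|\rho|$ to the puncture.

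For the first equivalence I observe that $\theta_1/\omega_1$ is already holomorphic and nonvanishing at $z=0$, so $\rho$ is holomorphic and nonvanishing near $0$ if and only if the factor $z^{\mu_*-\mu}$ is, which happens if and only if $\mu_*=\mu$. By Definition~\ref{def:types}(4) this coincides with cylindricality.

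For the second equivalence, suppose first that $f$ is incomplete. By Fact~\ref{fact:complete} the singular set $\{|\rho|=1\}$ accumulates at $0$, while the displayed formula shows $|\rho|$ has a well-defined limit in $[0,+\infty]$ at the puncture. Continuity forces that limit to equal $1$, which is finite and positive, so $\mu_*-\mu=0$ and $f$ is cylindrical with $|\rho(0)|=1$. Conversely, assume $f$ is cylindrical with $|\rho(0)|=1$; then the first part gives $\rho$ holomorphic and nonvanishing at $0$ with $|\rho(0)|=1$. If $\rho$ is the unimodular constant $\rho(0)$, every point is singular. Otherwise write $\rho(z)=\rho(0)\bigl(1+cz^k+o(z^k)\bigr)$ with $c\neq 0$, $k\geq 1$; then
\[
  |\rho(z)|^2-1 = 2\Re(cz^k)+o(|z|^k)
\]
takes both signs on every circle around $0$ of small radius, so its zero set, the singular locus of $f$, accumulates at $0$, and Fact~\ref{fact:complete} gives incompleteness.

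The only analytic content beyond Fact~\ref{fact:finite} is the sign-change step at the end: that a holomorphic $\rho$ with $|\rho(0)|=1$ which is not a unimodular constant has $|\rho|-1$ changing sign near $0$. This is immediate from the power-series expansion above (or equivalently from the fact that $\log|\rho|$ is harmonic and vanishes at $0$), so I expect no real obstacle.
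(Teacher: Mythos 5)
Your proof is correct and follows essentially the same route as the paper: the first equivalence and the ``incomplete $\Rightarrow$ cylindrical with $|\rho(0)|=1$'' direction both come from comparing $\mu$ with $\mu_*$ via the expansion of Fact~\ref{fact:finite} and the resulting limit of $|\rho|$ at the puncture, and the converse comes from expanding $\rho$ about its unimodular value at $0$. The only (harmless) variation is at the very end, where you show accumulation of $\{|\rho|=1\}$ by a sign-change argument on small circles, whereas the paper normalizes $\rho(0)=1$ via the $\U(1)$-ambiguity and passes to a coordinate $w$ with $\log\rho=w^n$ so that the singular set is exactly $\{\Re(w^n)=0\}$.
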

\begin{proof}
 Note that this lemma holds not only for regular ends but for 
 WCF-ends.
 The first assertion is obvious.
 In particular, if the end is not cylindrical, $\mu\neq \mu_*$
 in \eqref{eq:order-canonical}
 and then
 \[
     \lim_{z\to 0} |\rho(z)|= 0 \quad\text{or}\quad
                              +\infty.
 \]
 This implies that the singular set $\{|\rho|=1\}$ does not 
 accumulate at the origin.
 Thus incomplete ends are all cylindrical.
 Moreover, if the singular set accumulates at the origin, then 
 $|\rho(0)|=1$.
 Conversely, assume that $f$ is cylindrical and $|\rho(0)|=1$.
 By the $\U(1)$-ambiguity as in \eqref{eq:u-one-amb}, one can
 assume $\rho(0)=1$ without loss of generality.
 If $\rho$ is constant, all points are singular, and
 then the end is incomplete.  Otherwise, $\rho$ 
 can be expanded as 
 $\rho(z)=1+b z^n +o(z^n)$ ($b\neq 0$),
 where $n$ is the ramification order of $\rho$.
 Hence one can take a complex coordinate  $w$ ($w(0)=0$)
 such that
 \begin{equation}\label{eq:rho-can}
     \log \rho = w^n.
 \end{equation}
 Then the singular set 
 \begin{equation}\label{eq:sing-norm}
      \{|\rho|=1\}=  \{w\,;\,\Re (w^n) = 0 \} 
 \end{equation}
 accumulates at the origin.
\end{proof}

\begin{proposition}\label{lem:cuspidal}
 Let $f \colon D^*\to H^3$ be an incomplete regular WCF-end
 of a flat front, 
 whose  image is not contained in a geodesic line in $H^3$.
 Then, for a sufficiently small $\varepsilon>0$,
 only cuspidal edge singularities 
 appear in the image $f(D^*_\varepsilon)$,
 and the set of cuspidal edges has $2n$ components,
 where $n$ is the ramification order of 
 $\rho(z)$ at $z=0$.
\end{proposition}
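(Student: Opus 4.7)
The plan is to use the normal form provided by Lemma~\ref{lem:incomplete} to describe the singular set of $f$ explicitly near the end, and then to invoke the standard criterion for cuspidal edges of flat fronts. By Lemma~\ref{lem:incomplete}, since $f$ is an incomplete regular WCF-end, $f$ is cylindrical and $|\rho(0)|=1$. Using the $\U(1)$-ambiguity \eqref{eq:u-one-amb} we may assume $\rho(0)=1$; the hypothesis that $f$ is not contained in a geodesic excludes the degenerate case $\rho\equiv 1$ (which is the ``$\alpha=-1$ with $|\delta|^2=2$'' subcase of Example~\ref{ex:revolution}). Thus $\rho$ is nonconstant, its ramification order $n\geq 1$ at $z=0$ is well-defined, and a local coordinate $w$ with $\log\rho=w^n$ exists as in \eqref{eq:rho-can}.

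In this $w$-coordinate, the singular set $\{|\rho|=1\}$ is exactly $\{\Re(w^n)=0\}$, which on $D^*_\varepsilon=\{0<|w|<\varepsilon\}$ is the disjoint union of $2n$ smooth open arcs---the open rays at arguments $\theta_k=(2k+1)\pi/(2n)$ for $k=0,\dots,2n-1$. This immediately yields the count of $2n$ components, settling the second assertion.

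To show that every singular point in $D^*_\varepsilon$ is a cuspidal edge (for $\varepsilon$ small enough), I apply the standard criterion for singularities of flat fronts from \cite{KRSUY}: a singular point $p$ with $|\rho(p)|=1$ is a cuspidal edge provided $d\rho(p)\neq 0$ and the null direction of $df$ is transverse to the singular curve at $p$. Non-degeneracy is immediate since $d\rho=nw^{n-1}\rho\,dw$ is nonvanishing on $D^*_\varepsilon$. For the transversality, write $\omega=\hat\omega(w)\,dw$; starting from $ds^2=|\omega+\bar\theta|^2$ one derives that the null direction at a singular point has argument $(\pi-\arg\rho)/2-\arg\hat\omega$ modulo $\pi$ in the $w$-plane. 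Because the end is cylindrical, Corollary~\ref{cor:alpha-mu} gives $\hat\omega(w)=c/w+O(1)$ with $c\neq 0$, so at $w_0=r_0e^{\imag\theta_k}$ the difference between the null-direction angle and the tangent angle $\theta_k$ of the singular ray tends to $\pi/2-\arg c\pmod\pi$ as $r_0\to 0$. When this limit is nonzero modulo $\pi$, transversality, and hence the cuspidal-edge property, holds on all $2n$ arcs for $\varepsilon$ small.

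The main obstacle is the borderline case $\arg c\equiv\pi/2\pmod\pi$, in which the leading-order comparison is inconclusive. In that case one must carry the expansions of $\arg\hat\omega$ (using the next coefficient in the Laurent series of $\hat\omega$) and of $\arg\rho=\Im(w^n)$ along each ray to higher order, and verify that the resulting next-order term in the angular difference is nonzero on every one of the $2n$ rays after possibly shrinking $\varepsilon$, thereby excluding swallowtail or higher $A_k$-singularities from accumulating at the end. Making this expansion uniform over all $2n$ arcs and all $r_0\in(0,\varepsilon)$ is the technical crux of the argument.
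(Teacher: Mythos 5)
Your setup (reduction to $\rho(0)=1$ via the $\U(1)$-ambiguity, exclusion of $\rho\equiv 1$ by the geodesic hypothesis, the coordinate $w$ with $\log\rho=w^n$, and the count of $2n$ rays $\{\Re(w^n)=0\}$) matches the paper exactly. The problem is the second half: you reduce the cuspidal-edge property to a transversality between the null direction and the singular rays, find that the leading-order angular difference is $\pi/2-\arg c\pmod{\pi}$ where $\hat\omega=c/w+O(1)$, and then explicitly leave the ``borderline case'' $\arg c\equiv\pi/2\pmod{\pi}$ unresolved, calling the required higher-order expansion ``the technical crux.'' As written, this is a genuine gap: the proof is not complete, since you have not shown that the borderline case is either impossible or still yields transversality.

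In fact the borderline case cannot occur, and the information you need is already in your hands but unused. Since the end is incomplete it is cylindrical with $\alpha=-1$ (Proposition~\ref{lem:alpha-mu} and Lemma~\ref{lem:incomplete}), so the top-term coefficient of the Hopf differential is $q_{-2}=m^2/4>0$, i.e.\ $Q=\frac{m^2}{4w^2}\bigl(1+o(1)\bigr)dw^2$ (the coefficient of a double pole is coordinate-invariant). Because $Q=\rho\,\omega^2$ and you have normalized $\rho(0)=1$, this forces $c^2=m^2/4$, hence $c$ is \emph{real} and $\arg c\in\{0,\pi\}$ --- the farthest possible from your feared value $\pi/2$. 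This is precisely how the paper's proof avoids the issue: it works with the invariant quantity $\sqrt{\zeta_c}=d(\log\rho)/\sqrt{Q}$ from \cite[Proposition~4.7]{KRSUY} (a singular point fails to be a cuspidal edge only if $\Im\sqrt{\zeta_c}=0$ there), computes $\sqrt{\zeta_c(w)}=\frac{2n}{m}w^n\bigl(1+o(1)\bigr)$ with real positive leading coefficient, and concludes by noting that $\{\Re(w^n)=0\}$ and $\{\Im(w^n)=0\}$ are disjoint near $w=0$. If you insert the identity $c^2=q_{-2}=m^2/4$ into your argument, your leading-order comparison becomes conclusive on all $2n$ rays and the higher-order analysis you postponed is unnecessary; without it, the proof does not close.
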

\begin{proof}
 By Lemma~\ref{lem:incomplete},  
 $|\rho|$ is a well-defined function on a neighborhood of the origin
 satisfying $|\rho(0)|=1$.
 By the $\U(1)$-ambiguity as in \eqref{eq:u-one-amb},
 we may assume $\rho(0)=1$ without loss of generality.
 If $\rho$ is identically $1$, then
 \cite[Proposition 4.7]{KRSUY} yields that $f$ is rotationally
 symmetric, and then, the image of $f$ is a geodesic line.
 Thus  $\rho$ is not identically $1$, and then
 we can take a complex local coordinate $w$
 around the origin  as in \eqref{eq:rho-can}.
 Hence the set of singularities is expressed as in \eqref{eq:sing-norm},
 which consists of $2n$ rays starting at the origin 
 in the $w$-plane.

 By Proposition~\ref{lem:alpha-mu} and Lemma~\ref{lem:incomplete}, 
 we have $\alpha=-1$.
 Thus the Hopf differential $Q$ expands as 
 \begin{equation}\label{eq:Q-top-incomplete}
      Q = \frac{m^2}{4z^2}\bigl(1+o(1)\bigr)\,dz^2,
 \end{equation}
 where $m$ is the multiplicity of the end.
 On the other hand, a singular point which is not a cuspidal edge point
 must be a zero of the imaginary part of 
 the function 
 \[
    \sqrt{\zeta_c}
            =\frac{d\bigl(\log\rho\bigr)}{\sqrt{Q}},
 \]
 (see \cite[Proposition~4.7]{KRSUY}), and we have the following expansion
 \[
     \sqrt{\zeta_c(w)}=\frac{2n}{m} w^{n}\bigl(1+o(1)\bigr),
 \]
 where $w$ is the local coordinate near the origin given in
 \eqref{eq:rho-can}.
 Then the singular set is given by $\{\Re(w^n)=0\}$, and 
 the zeros of the imaginary part of $\sqrt{\zeta_c}$ are approximated by 
 $\{\Im(w^n)=0\}$.
 Since  the two  sets $\{\Re(w^n)=0\}$ and 
 $\{\Im(w^n)=0\}$ are disjoint near $w=0$,
 there are no singular points  
 other than cuspidal edge points near $z=0$.
\end{proof}

\section{Flux and axes of ends}\label{sec:flux}
\subsection*{The flux matrix}
Let $f \colon D^*\to H^3$ be an end of a flat front such that the
complex structure of $D^*$ is compatible with the metric
\eqref{eq:one-one-part}.
Regarding $H^3\subset \SL(2,\C)$ as in \eqref{eq:hyp-mat}, 
the {\it flux matrix\/} of $f$ is defined by 
\begin{equation}\label{eq:flux}
   \Phi_f:=\frac{\imag}{2\pi}\int_{\gamma} 
         (\partial f) f^{-1}  \in\sl(2,\C), 
\end{equation}
where $\sl(2,\C)$ is the Lie algebra of $\SL(2,\C)$
and $\gamma$ is an arbitrary loop in $D^*$ 
going around the origin in the counterclockwise direction.
Here, $\partial f$ is the $(1,0)$-part of $df$, that is, 
$\partial f = f_z\,dz$ for a complex coordinate $z$.

We first show the following:
\begin{proposition}\label{prop:flux}
 Let $\E_f \colon \widetilde D^*\to \SL(2,\C)$ be a holomorphic
 Legendrian lift of $f$, where $\widetilde D^*$ is the
 universal cover of $D^*$.  
 Then the following formula holds{\rm :}
 \begin{equation}\label{eq:two-equalities-in-one-line}
    (\partial f) f^{-1}
        =d\E^{}_f \E_f^{-1}=
	\frac{1}{(G-G_*)^2}
	\begin{pmatrix}
	 -G_*dG-G dG_* & G_*^2dG+G^2 dG_* \\
	 -dG-dG_* & G_*dG+GdG_*
	\end{pmatrix} \; , 
 \end{equation}
 where $G$ and $G_*$ are the hyperbolic Gauss maps.
 In particular, the $\sl(2,\C)$-valued $1$-form
 $(\partial f) f^{-1}$
 is holomorphic, and common to the parallel family 
 $\{f_t\}_{t\in \R}$, 
 that is, $(\partial f) f^{-1}=(\partial f^{}_t) f^{-1}_t$
 holds.
\end{proposition}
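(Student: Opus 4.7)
The plan is to establish the two equalities in \eqref{eq:two-equalities-in-one-line} separately, and then read off holomorphicity and parallel-invariance directly from the explicit matrix formula on the right.

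First I would prove $(\partial f)f^{-1} = d\E_f\,\E_f^{-1}$. Since $f = \E_f\E_f^*$ and the entries of $\E_f$ are holomorphic, only the first factor contributes to the $(1,0)$-part of $df$; that is, $\partial f = d\E_f\cdot \E_f^{*}$, because the entries of $\E_f^*$ are antiholomorphic and so $\partial(\E_f^*)=0$. Using $f^{-1} = (\E_f^*)^{-1}\E_f^{-1}$, the factor $\E_f^*$ cancels and the first equality follows at once.

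For the second equality I would invoke the Legendrian form \eqref{eq:can-form} to rewrite
\[
  d\E_f\,\E_f^{-1}
  = \E_f\,(\E_f^{-1}d\E_f)\,\E_f^{-1}
  = \E_f\begin{pmatrix} 0 & \theta \\ \omega & 0 \end{pmatrix}\E_f^{-1},
\]
then substitute the $(G,\omega)$-representation \eqref{eq:g-omega-repr} for $\E_f$ (whose inverse is immediate from $\det \E_f = 1$), together with the identities $\omega = -dG/\xi^2$ and $\theta = \xi^2\,dG_*/(G-G_*)^2$; the latter follows from $Q = \omega\theta$ combined with \eqref{eq:can-hopf}. Carrying out the three-fold matrix product and simplifying reproduces exactly the matrix in \eqref{eq:two-equalities-in-one-line}; crucially, every factor of $\xi$ cancels, leaving an expression purely in $G$, $G_*$ and their differentials.

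From this explicit formula the remaining claims are immediate. Wherever $G\neq G_*$, the right-hand side is a holomorphic $\sl(2,\C)$-valued $1$-form, hence so is $(\partial f)f^{-1}$. Moreover, the parallel family $\{f_t\}_{t\in\R}$ shares the same pair of hyperbolic Gauss maps $G$ and $G_*$ (the operation $\E_f\mapsto \E_{f_t}$ merely rescales $\xi$, which has already dropped out of the formula), so the same matrix represents $(\partial f_t)f_t^{-1}$, giving the asserted parallel-invariance. The main step is really just the careful three-fold matrix multiplication; the essential point is that after the substitutions for $\omega$ and $\theta$ the apparent $\xi$-dependence disappears, which is precisely what makes the formula intrinsic to the pair $(G,G_*)$ rather than to a particular Legendrian lift.
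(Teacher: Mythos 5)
Your proof is correct and takes essentially the same route as the paper's: the first equality follows from the holomorphicity of $\E_f$ in $f=\E_f\E_f^*$, and the second from substituting the explicit representation of $\E_f$ in terms of the hyperbolic Gauss maps and checking that all factors of $\xi$ cancel. The paper simply differentiates \eqref{eq:gauss-repr} and multiplies by $\E_f^{-1}$ rather than conjugating the Maurer--Cartan form $\E_f^{-1}d\E_f$ as you do, but this is the same computation in a slightly different order, and your observations on holomorphicity and on the parallel family sharing $(G,G_*)$ match the paper's reasoning.
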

\begin{proof}
 Since $\E_f$ is holomorphic,
 $(\partial f) f^{-1}
     =d\E^{}_f \E_f^{-1}$ follows from $f=\E^{}_f\E_f^*$.
 Then by \eqref{eq:gauss-repr}, we have the conclusion.
\end{proof}
As defined in \cite{KRUY}, a smooth map
$f \colon M^2\to H^3$ on a $2$-manifold $M^2$ 
is called a flat {\it p-front\/} if for each $x\in M^2$, 
there exists a neighborhood $U$ of $x$ such that 
the restriction of $f$ to $U$ is a flat front.
Roughly speaking,  a p-front is locally a front,
but its unit normal vector field $\nu$ may not be
globally single-valued.
The caustics (i.e., focal surfaces)
of flat fronts are also flat, but in general 
they are not fronts but only p-fronts.  
So if we wish to analyze the asymptotic behavior of
ends of caustics, 
we must work in the category of p-fronts.
A p-front is called {\em non-co-orientable\/} if it is not a front.

We now assume $f\colon M^2\to H^3$ is a weakly complete flat p-front
of finite type. 
Since Fact \ref{fact:finite-top} holds also for flat p-fronts
(see \cite[Proposition 5.4]{KRUY}), 
there exist a compact Riemann surface $\overline M^2$
and a finite set of points $\{p_1,\dots,p_n\}$ such that
$M^2$ is biholomorphic to 
$\overline M^2\setminus \{p_1,\dots,p_n\}$.
Though $G$ and $G_*$ may have essential singularities
at $p_j$, the holomorphic form $(\partial f) f^{-1}$ is a 
globally defined $\sl(2,\C)$-valued $1$-form on $M^2$.
Thus the total sum of the residues at $p_1,\dots,p_n$ vanishes:
\begin{corollary}[The balancing formula]\label{lem:flux} 
 Let $f \colon M^2\to H^3$ be a weakly complete flat p-front
 of finite type. 
 Then the sum of flux matrices  over its ends vanishes.
\end{corollary}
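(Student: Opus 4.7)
The plan is to interpret the balancing formula as a direct application of Stokes' theorem on the compactification $\overline M^2$ of $M^2$ provided by Fact~\ref{fact:finite-top} (valid for flat p-fronts by \cite[Proposition 5.4]{KRUY}). Writing $M^2\cong\overline M^2\setminus\{p_1,\dots,p_n\}$, the first step is to observe that the $\sl(2,\C)$-valued $1$-form
\begin{equation*}
    \Omega:=(\partial f)f^{-1}
\end{equation*}
is a globally defined holomorphic $1$-form on $M^2$. This is essentially the content of Proposition~\ref{prop:flux}: on any simply connected open subset one has $\Omega=d\E^{}_f\E_f^{-1}$ with $\E_f$ holomorphic, so $\Omega$ is holomorphic there; and since the expression $(\partial f)f^{-1}$ involves only the single-valued map $f$, these local holomorphic expressions patch to a global holomorphic $1$-form on $M^2$, despite the possible multi-valuedness of $\E_f$, of $\nu$ (in the p-front case), or of the hyperbolic Gauss maps $G,G_*$ near the punctures. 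Being holomorphic, $\Omega$ is automatically closed: $d\Omega=0$.

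Next I would choose pairwise disjoint small open disks $D_j\subset\overline M^2$ containing $p_j$, orient their boundaries $\gamma_j:=\partial D_j$ counterclockwise around $p_j$, and set $N:=\overline M^2\setminus\bigsqcup_j D_j$. Then $N$ is a compact oriented surface with boundary, and the boundary orientation induced from $N$ reverses that of each $\partial D_j$, so $\partial N=-\sum_j\gamma_j$. Applying Stokes' theorem componentwise to the matrix-valued $1$-form $\Omega$ gives
\begin{equation*}
 0=\int_{N}d\Omega=\int_{\partial N}\Omega=-\sum_{j=1}^{n}\int_{\gamma_j}\Omega,
\end{equation*}
and multiplying by $\imag/(2\pi)$, together with the definition \eqref{eq:flux} of $\Phi_f$ at each end $p_j$, yields $\sum_j\Phi_f(p_j)=0$.

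The only real subtlety is the global well-definedness and holomorphicity of $\Omega$ when $f$ is merely a p-front; this is handled once and for all by the identity $\Omega=(\partial f)f^{-1}$, which is intrinsically built from the single-valued map $f$. Consequently no hypothesis about regularity of the ends is needed and no analysis of the behavior of $\Omega$ near each puncture is required---Stokes' theorem applies as soon as $\Omega$ is smooth and closed on $M^2$, which is precisely what Proposition~\ref{prop:flux} guarantees.
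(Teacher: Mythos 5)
Your proof is correct and is essentially the paper's own argument: the paper observes that $(\partial f)f^{-1}$ is a globally defined holomorphic $\sl(2,\C)$-valued $1$-form on $M^2=\overline M^2\setminus\{p_1,\dots,p_n\}$ and concludes by the residue theorem on the compact surface $\overline M^2$, which is exactly the Stokes computation you spell out. No substantive difference.
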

This suggests that the flux matrices just defined might 
be useful for the global study of flat fronts,
like as for the cases of CMC surfaces in $\R^3$ 
(cf.\ \cite{KKS}) and CMC-1 surfaces in $H^3$ (cf. \cite{RUY}).

By definition, the flux matrices are meaningful not only for 
regular ends but also irregular ends for which the hyperbolic Gauss
maps have essentially singularities at the end.
However we shall treat only regular ends in this paper.

On the other hand, if the end is not a front but a p-front,
by taking the double cover, it becomes a front 
(see \cite[Corollary 5.2]{KRUY}). 
So, from now on, 
we shall usually work in the category of fronts.

Next, we shall define a projection:
\begin{equation}\label{eq:c-2-proj}
   \Pi:\C^2 \setminus \left\{\begin{pmatrix}
			   0 \\ 0
	  		  \end{pmatrix}\right\} 
    \ni \begin{pmatrix}     x \\ y \end{pmatrix}
    \longmapsto \frac{x}y\in P^1(\C)=\C\cup\{\infty\}  .
\end{equation}
Since, for the Poincar\'e upper half-space model
$\R_+^3$, the ideal boundary $\partial H^3$
can be identified with $\C\cup \{\infty\}$
(see \eqref{eq:asymptotic} in the appendix),
the image of $\Pi$ is contained in $\partial H^3$.
\begin{theorem}\label{thm:limit}
 Let a flat front $f \colon D^*\to H^3$ be a regular WCF-end. 
 Then there exists an eigenvector
 $\vect{v}$ of the flux matrix $\Phi_f$ such that
 the projection $\Pi(\vect{v})\in \C\cup\{\infty\}$
 equals the limiting value of the end, 
 that is, 
 \[
    \Pi(\vect{v}) =\lim_{z\to 0} \zeta(z)=G(0)=G_*(0) \; , 
 \]
 where $\pi\circ f(z) =\bigl(\zeta(z), h(z)\bigr)$. 
 Moreover, the flux matrix $\Phi_f$ is a lower triangular matrix if
 $G(0)=0$.
\end{theorem}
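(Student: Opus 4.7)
My plan is to combine the explicit expression \eqref{eq:two-equalities-in-one-line} for $(\partial f)f^{-1}$ with the $\SL(2,\C)$-equivariance of the flux matrix, so that the theorem reduces to a short residue computation at $z=0$ in a normalized frame where the end limits to $0 \in \partial H^3$. First I would record the equivariance: for $u\in\SL(2,\C)$,
\[
  (\partial(ufu^*))(ufu^*)^{-1} = u\,(\partial f)f^{-1}\,u^{-1},
  \qquad\text{so}\qquad \Phi_{\iota_u\circ f}=u\Phi_f u^{-1}.
\]
At the same time, the hyperbolic Gauss maps transform by the M\"obius action \eqref{eq:gauss-moebius}, and the projection $\Pi$ in \eqref{eq:c-2-proj} intertwines the linear and M\"obius actions, i.e.\ $\Pi(u\vect{v}) = u\star\Pi(\vect{v})$; eigenvectors transform by $\vect{v}\mapsto u\vect{v}$. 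Therefore the entire statement is $\SL(2,\C)$-invariant, and by Fact~\ref{fact:regular} and Proposition~\ref{prop:end-limit} I may assume $a:=G(0)=G_*(0)=0$. Using the duality \eqref{eq:dual} if necessary, I may further assume that $(G,\omega)$ is a dominant pair.

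In this normalized setting both claims reduce to showing that the $(1,2)$-entry
\[
  \omega_{12}:=\frac{G_*^2\,dG + G^2\,dG_*}{(G-G_*)^2}
\]
of the matrix $1$-form in \eqref{eq:two-equalities-in-one-line} has vanishing residue at $z=0$. If either $G$ or $G_*$ is identically zero, the statement is immediate. Otherwise, writing $G=b_1 z^{m_1}+\cdots$ and $G_*=b_2 z^{m_2}+\cdots$ with $b_1,b_2\neq 0$ and $m=m_1\leq m_2$, each summand of the numerator is a holomorphic $1$-form of order at least $2m+m-1=3m-1$ at the origin, while the denominator has order exactly $2m$ (here $\alpha\neq 1$ is essential to rule out cancellation of the leading terms of $G-G_*$ in the case $m_1=m_2$). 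Consequently $\omega_{12}$ is holomorphic at $0$ of order at least $m-1\geq 0$, and $\Res_{z=0}\omega_{12}=0$. Hence $\Phi_f$ is lower triangular in this normalized frame, and $\trans{(0,1)}$ is an eigenvector with $\Pi\bigl(\trans{(0,1)}\bigr)=0=G(0)$.

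Pulling back by $u^{-1}$ yields the general case of the first statement, while the lower-triangularity claim is precisely what was established above in the frame with $G(0)=0$. The main thing I expect to require care is the verification that $\ord_0(G-G_*)=m$ (and not something larger): when $m_1=m_2$ the two leading terms could in principle cancel, and this is where Remark~\ref{rem:dominant-pair} together with the already-known fact $\alpha\neq 1$ from \cite{KRUY} enters in an essential way. Beyond that, the argument is elementary: it is just a bound on vanishing orders followed by the residue formula.
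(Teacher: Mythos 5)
Your proposal is correct and follows essentially the same route as the paper: establish $\SL(2,\C)$-equivariance of the flux matrix and of $\Pi$, normalize to $G(0)=G_*(0)=0$ with $(G,\omega)$ dominant, and then show the $(1,2)$-entry $\bigl(G_*^2\,dG+G^2\,dG_*\bigr)/(G-G_*)^2$ of \eqref{eq:two-equalities-in-one-line} is holomorphic at the origin (using $\alpha\neq 1$ to control $\ord_0(G-G_*)$), so its residue vanishes and $\trans{(0,1)}$ is an eigenvector projecting to $0=G(0)$. Your order-counting with general $m_1\le m_2$ and your explicit treatment of the case where one Gauss map vanishes identically are only cosmetic variations on the paper's computation via the expansions \eqref{eq:gauss-normal}.
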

\begin{proof}
 An isometric action $\iota_{u}$ ($u\in\SL(2,\C)$), 
 as in \eqref{eq:isometry}, 
 induces a flat front $ufu^*$ congruent to $f$,
 and the flux matrix of $ufu^*$ is given by
 $u \Phi_f u^{-1}$.  
 This implies that an eigenvector of
 $u \Phi_f u^{-1}$ must be $u\vect{v}$.
 On the other hand, the isometric action induces
 a transformation of the ideal boundary so that 
 \[
   \partial H^3=\C\cup\{\infty\}\ni \zeta\mapsto
     u\star \zeta:=
         \frac{u_{11}\zeta+u_{12}}{%
               u_{21}\zeta+u_{22}} \in 
      \C\cup \{\infty\}=\partial H^3 \quad 
         \bigl(u=(u_{ij})\bigr).
 \]
 Thus we have
 \[
    \Pi(u\vect{v})=u\star \Pi(\vect{v})\qquad 
         \left(\vect{v}\in \C^2\setminus \left
\{\begin{pmatrix} 0\\ 0\end{pmatrix} \right\}\right),
 \]
 which implies that the map $\Pi$ is equivariant.
 So to prove the assertion, we may 
 assume that $G(0)=G_*(0)=0$, replacing $f$ by $ufu^*$
 for a suitable isometry $u\in \SL(2,\C)$ if necessary.
 Without loss of generality,
 we may assume that $(G,\omega)$ is a dominant pair
 (see Remark \ref{rem:dominant-pair}).
 Then the hyperbolic Gauss maps are written as in \eqref{eq:gauss-normal} 
 for $a=G(0)=0$.
 Thus, we have
 \begin{align}
    G_*^2 dG+G^2 dG_*&=
      \left(mc^3\alpha(\alpha+1)z^{3m-1}+o(z^{3m-1})\right)\,dz,
  \label{eq:numerator}\\
    (G-G_*)^2&=c^2(1-\alpha)^2z^{2m}+o(z^{2m}).
  \label{eq:denominator}
 \end{align}
 Since the ratio $\alpha$ of the Gauss maps is not equal to $1$,
 $(G_*^2 dG+G^2 dG_*)/(G-G_*)^2$
 is a holomorphic  $1$-form at $z=0$. 
 In particular, 
 it follows from \eqref{eq:flux}, 
 \eqref{eq:two-equalities-in-one-line} that
 the flux matrix $\Phi_f$
 is a lower triangular matrix, and
 $\vect{v}=\begin{pmatrix}0\\1\end{pmatrix}$ is one of the
 eigenvectors.
 Then we have $\Pi(\vect{v})=0=G(0)=G_*(0)$
 which proves the assertion.
\end{proof}
The eigenvalues of the flux matrix are related to the ratio of 
the Gauss maps:
\begin{theorem}\label{thm:flux-eigenvalue}
 The eigenvalues of the flux matrix $\Phi_f$ of a regular WCF-end 
 $f : D^* \to H^3$ of a flat front are
 \[
      \pm\frac{2m\alpha}{(1-\alpha)^2}
      \left(
         =\mp\frac{2q_{-2}}{m}
      \right),
 \]
 where  $m$ is the multiplicity of the end
 {\rm (}cf.\ \eqref{eq:gauss-normal}{\rm)}, $\alpha$ is
 the ratio of the Gauss maps \eqref{eq:gauss-ratio},
 and 
 $q_{-2}$ is the top-term coefficient of the Hopf differential
 \eqref{eq:Q-top}.
 In particular, if $\alpha\ne 0$
 {\rm (}that is, if $f$ is not horospherical\/{\rm )},
 then $\Phi_f$ is diagonalizable.
\end{theorem}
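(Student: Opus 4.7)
The plan is to exploit both the equivariance of the flux matrix under the $\SL(2,\C)$-action and the explicit formula for $(\partial f)f^{-1}$ in Proposition~\ref{prop:flux}, thereby reducing the computation of the eigenvalues to an elementary residue calculation.

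First I would use the identity $\Phi_{ufu^*} = u\Phi_f u^{-1}$ (which follows at once from the definition and $\partial(ufu^*)(ufu^*)^{-1} = u(\partial f)f^{-1}u^{-1}$) to conclude that the eigenvalues of $\Phi_f$ are invariants of the congruence class of $f$. Mimicking the argument in the proof of Theorem~\ref{thm:limit}, I may then normalize by a suitable $u\in\SL(2,\C)$ so that $a=G(0)=G_*(0)=0$, and assume $(G,\omega)$ is a dominant pair in the sense of Remark~\ref{rem:dominant-pair}. Under this normalization Theorem~\ref{thm:limit} already tells us that $\Phi_f$ is lower triangular, and the expansions in \eqref{eq:gauss-normal} take the form
\begin{equation*}
  G(z) = cz^m + o(z^m),\qquad G_*(z) = \alpha c z^m + o(z^m).
\end{equation*}

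Next I would observe that the matrix displayed in Proposition~\ref{prop:flux} has identically vanishing trace, so $\trace\Phi_f = 0$; combined with the lower-triangular structure, this forces the two eigenvalues to be $\pm\lambda$, where $\lambda$ is the $(1,1)$-entry. To compute $\lambda$, I substitute the normalized expansions into the top-left entry of $(\partial f)f^{-1}$:
\begin{align*}
  -G_*\,dG - G\,dG_* &= -2m\alpha c^2\, z^{2m-1}\,dz + o(z^{2m-1})\,dz,\\
  (G-G_*)^2 &= (1-\alpha)^2 c^2\, z^{2m} + o(z^{2m}),
\end{align*}
so the quotient is a meromorphic $1$-form with a simple pole at $z=0$ and residue $-2m\alpha/(1-\alpha)^2$. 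Since $(\imag/2\pi)\cdot 2\pi\imag = -1$, the $(1,1)$-entry of $\Phi_f$ equals $2m\alpha/(1-\alpha)^2$, which gives the stated eigenvalues $\pm 2m\alpha/(1-\alpha)^2$; the equivalent expression $\mp 2q_{-2}/m$ follows immediately from \eqref{eq:Q-top}.

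Finally, when $\alpha\neq 0$ the two eigenvalues are distinct and nonzero (since $m\geq 1$ and $\alpha\neq 1$), hence $\Phi_f$ is diagonalizable. I do not anticipate a serious obstacle; the only care needed is to verify that the $o(z^{2m-1})$ and $o(z^{2m})$ corrections above only contribute to the holomorphic (non-residue) part of the $1$-form, and to handle the horospherical case $\alpha=0$ — where one of $G$, $G_*$ may be constant — separately. In that degenerate case the formula in Proposition~\ref{prop:flux} specializes immediately (e.g.\ $G_*\equiv 0$ makes the $(1,1)$-form vanish identically), which is consistent with both eigenvalues being $0$; diagonalizability is not claimed there.
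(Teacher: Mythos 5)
Your proposal is correct and follows essentially the same route as the paper: normalize by an isometry so that $G(0)=G_*(0)=0$ with $(G,\omega)$ dominant, invoke the lower-triangular structure of $\Phi_f$ from Theorem~\ref{thm:limit}, and read off the eigenvalues as the residues of the diagonal entries of $(\partial f)f^{-1}$ using the expansions \eqref{eq:gauss-normal}. Your explicit appeal to tracelessness and your separate treatment of the degenerate case where one Gauss map is constant are harmless refinements of the same computation, and your signs check out.
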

\begin{proof}
 Let us take the same notation as in the proof of 
 Theorem~\ref{thm:limit}. 
 Then
 the diagonal components are just the eigenvalues of $\Phi_f$  
 since $\Phi_f$ is a triangular matrix.  
 We have 
 $G_* dG + G dG_* = 
  \left(2mc^2\alpha z^{2m-1}+o(z^{2m-1})\right)dz$,
 and by \eqref{eq:denominator}, we have
 \[
     \frac{G_*dG+GdG_*}{(G-G_*)^2}=
     \frac{1}{z}\left( 
          \frac{2m\alpha}{(1-\alpha)^2}+o(1)
        \right)dz. 
 \]
 It follows from \eqref{eq:flux}, 
 \eqref{eq:two-equalities-in-one-line} that
 the eigenvalues of $\Phi_f$ are $\pm {2m\alpha}/{(1-\alpha)^2}$, 
 which are equal to $\mp 2q_{-2}/{m}$ by \eqref{eq:Q-top}.
\end{proof}

By Theorem~\ref{thm:flux-eigenvalue},
$\Phi_f$ is diagonalizable if $\alpha\ne 0$.
In this case,
the flux matrix has two linearly independent
eigenvectors $\vect{v}_1$, $\vect{v}_2$.
Then there exists a unique geodesic line in $H^3$ connecting
$\Pi(\vect{v}_1)$ and $\Pi(\vect{v}_2)=G(0)$ $\in \partial H^3$,
which is called the {\it axis of the flux matrix\/} $\Phi_f$. 
We denote this geodesic by $\overline{\vect{v}_1,\vect{v}_2}$.
By Theorem~\ref{thm:limit}, one endpoint of the axis is 
just the limit point of $f$.

We finish this subsection with some lemmas concerning the axis, 
which will be needed in the following sections.
\begin{lemma}\label{lem:flux-diagonal}
 Let $f \colon D^*\to H^3$ be a regular WCF-end of a flat front.
Then the flux matrix
$\Phi_f$ is diagonal if and only if the axis is the geodesic joining 
 the origin and infinity, 
 i.e., the $h$-coordinate 
axis $\{(\zeta,h) ; \zeta=0 \}$ in the upper half-space 
model $\R^3_+$.
\end{lemma}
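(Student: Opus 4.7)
The proof reduces to observing that the projection $\Pi\colon\C^2\setminus\{\boldsymbol{0}\}\to \C\cup\{\infty\}$ from \eqref{eq:c-2-proj} sends the standard basis vectors to the two endpoints of the $h$-axis in the Poincar\'e upper half-space model: $\Pi\left(\begin{smallmatrix}1\\0\end{smallmatrix}\right)=\infty$ and $\Pi\left(\begin{smallmatrix}0\\1\end{smallmatrix}\right)=0$. Under the identification of $\partial H^3$ with $\C\cup\{\infty\}$, the geodesic joining $0$ and $\infty$ is precisely the $h$-axis $\{(\zeta,h)\in\R^3_+\,;\,\zeta=0\}$. So the plan is to pass between the algebraic statement ``$\Phi_f$ is diagonal'' and the geometric statement ``the axis is the $h$-axis'' via this correspondence.

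For the forward direction, assume $\Phi_f$ is diagonal. By Theorem~\ref{thm:flux-eigenvalue} the nonzero (or zero, in the degenerate horospherical situation where $\alpha=0$ and the axis is not defined anyway) eigenvalues are $\pm 2m\alpha/(1-\alpha)^2$, and a diagonal matrix admits the standard basis vectors $\vect{e}_1,\vect{e}_2$ as a basis of eigenvectors. By definition of the axis just after Theorem~\ref{thm:flux-eigenvalue}, the axis joins $\Pi(\vect{e}_1)=\infty$ and $\Pi(\vect{e}_2)=0$, hence is the $h$-axis.

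For the converse, assume the axis is the $h$-axis, whose ideal endpoints are $0$ and $\infty$. By definition these are $\Pi(\vect{v}_1)$ and $\Pi(\vect{v}_2)$ for two linearly independent eigenvectors. Since $\Pi^{-1}(\infty)=\{(x,y)\,;\,y=0,\;x\neq 0\}$ and $\Pi^{-1}(0)=\{(x,y)\,;\,x=0,\;y\neq 0\}$, one of the $\vect{v}_j$ is a nonzero multiple of $\vect{e}_1$ and the other is a nonzero multiple of $\vect{e}_2$. Hence the standard basis diagonalizes $\Phi_f$, so $\Phi_f$ is diagonal.

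There is really no obstacle: the content of the lemma is the definitional unpacking of ``axis'' via $\Pi$ together with the explicit description of $\Pi$ on the standard basis. The only thing to keep consistent is the matching of endpoints: by Theorem~\ref{thm:limit}, the endpoint $G(0)$ corresponds to $\vect{v}=\vect{e}_2$ (and in the proof there $\Phi_f$ was already shown to be lower triangular when $G(0)=0$), so the additional diagonality here is precisely the statement that the \emph{other} endpoint $\Pi(\vect{v}_1)$ equals $\infty$.
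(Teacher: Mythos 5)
Your proof is correct and is essentially the paper's own argument: the paper's proof likewise just notes that $\Phi_f$ is diagonal exactly when its eigenvectors are the standard basis vectors, and that $\Pi\left(\begin{smallmatrix}1\\0\end{smallmatrix}\right)=\infty$, $\Pi\left(\begin{smallmatrix}0\\1\end{smallmatrix}\right)=0$. Your extra remark that the scalar (equal-eigenvalue) case only occurs for horospherical ends, where the axis is undefined, is a sensible precaution the paper leaves implicit.
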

\begin{proof}
 If $\Phi_f$ is a diagonal matrix, the eigenvectors are 
 $\vect{v}_1=\begin{pmatrix}1\\0\end{pmatrix}$ and
 $\vect{v}_2=\begin{pmatrix}0\\1\end{pmatrix}$, 
 and vice versa. 
 In this case, 
 $\Pi(\vect{v}_1)=\infty$ and $\Pi(\vect{v}_2)=0$. 
\end{proof}
\begin{lemma}
 Let $f \colon D^*\to H^3$ be a regular WCF-end of a flat front.
 Assume that $\Phi_f$ has the axis $\overline{\vect{v}_1,\vect{v}_2}$. 
 Let $\tilde f$ be an end congruent to $f$, that is,  
 $\tilde f = ufu^{*}$ for some $u \in \SL(2, \C)$.  
 Then the axis of the flux matrix $\Phi_{\tilde f}$ is given by 
 $\overline{u\vect{v}_1,u\vect{v}_2}$.
\end{lemma}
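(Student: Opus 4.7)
The plan is to exploit the $\SL(2,\C)$-equivariance of all three ingredients that enter the definition of the axis: the flux matrix itself, the passage to eigenvectors, and the projection $\Pi$ to the ideal boundary. First I would verify that under the isometry $f \mapsto \tilde f = u f u^{*}$ with $u \in \SL(2,\C)$ constant, the flux matrix transforms by conjugation,
\[
   \Phi_{\tilde f} = u\,\Phi_f\, u^{-1}.
\]
This is already remarked in the proof of Theorem~\ref{thm:limit} and follows immediately from $(\partial \tilde f)\tilde f^{-1} = u\,(\partial f) f^{-1}\, u^{-1}$ together with the definition \eqref{eq:flux}.

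Next I would translate this to eigenvectors: if $\Phi_f \vect{v}_j = \lambda_j \vect{v}_j$ for $j=1,2$ with $\vect{v}_1,\vect{v}_2$ linearly independent, then
\[
   \Phi_{\tilde f}(u\vect{v}_j) = (u\Phi_f u^{-1})(u\vect{v}_j) = \lambda_j (u\vect{v}_j),
\]
so $u\vect{v}_1, u\vect{v}_2$ are linearly independent eigenvectors of $\Phi_{\tilde f}$ and hence determine the axis of $\Phi_{\tilde f}$.

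Finally I would invoke the equivariance $\Pi(u\vect{v}) = u \star \Pi(\vect{v})$ of the projection established in the proof of Theorem~\ref{thm:limit}. Because the isometry $\iota_u$ of $H^3$ extends continuously to the M\"obius action $\zeta\mapsto u\star\zeta$ on $\partial H^3$ and sends geodesics to geodesics, it carries the geodesic $\overline{\vect{v}_1,\vect{v}_2}$ joining the boundary points $\Pi(\vect{v}_1)$ and $\Pi(\vect{v}_2)$ onto the geodesic joining $u\star\Pi(\vect{v}_1) = \Pi(u\vect{v}_1)$ and $u\star\Pi(\vect{v}_2) = \Pi(u\vect{v}_2)$. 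By definition, this is precisely the axis $\overline{u\vect{v}_1,u\vect{v}_2}$ of $\Phi_{\tilde f}$.

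No step here poses a real obstacle: each of the three equivariance statements (conjugation of $\Phi_f$, $\SL(2,\C)$-equivariance of eigenvector assignment, and equivariance of $\Pi$) is either linear-algebraic or was already established in the earlier proofs, so the lemma is a formal consequence of the $\SL(2,\C)$-equivariance of the entire construction.
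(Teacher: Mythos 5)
Your proposal is correct and follows the same route as the paper, which simply cites the observation from the proof of Theorem~\ref{thm:limit} that eigenvectors of $\Phi_f$ are carried by $u$ to eigenvectors of $\Phi_{\tilde f}=u\Phi_f u^{-1}$; you have merely written out the conjugation, eigenvector, and $\Pi$-equivariance steps explicitly.
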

\begin{proof}
 As we have already noted in the proof of Theorem~\ref{thm:limit}, 
 if $\vect{v}$ is an eigenvector of $\Phi_f$, 
 then $u\vect{v}$ is an eigenvector of $\Phi_{\tilde f}$. 
\end{proof}

\begin{lemma}\label{lem:vertical-axis}
 Let $f\colon D^*\to H^3$ be a regular WCF-end of a flat front.
 If the ratio $\alpha$ of the Gauss maps is not zero, 
 then there exists an isometry $\iota$ of $H^3$ such that 
 the axis of the flux matrix $\Phi_{\iota \circ f}$ coincides with 
 the  geodesic joining $\infty$ and $0$, that is, the
 $h$-coordinate axis in $\R^3_+$.
\end{lemma}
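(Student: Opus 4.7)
The plan is to reduce the lemma to Lemma \ref{lem:flux-diagonal} by finding an explicit element of $\SL(2,\C)$ that diagonalizes the flux matrix. The key point is that under the isometric action $\iota_u\colon X \mapsto uXu^*$, the flux transforms by conjugation, $\Phi_{\iota_u\circ f} = u\Phi_f u^{-1}$, so making the axis vertical is equivalent to diagonalizing $\Phi_f$ by an element of $\SL(2,\C)$.

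First, I would invoke Theorem \ref{thm:flux-eigenvalue}: since $\alpha\ne 0$, the two eigenvalues $\pm 2m\alpha/(1-\alpha)^2$ are distinct and nonzero, so $\Phi_f$ is diagonalizable over $\C$ with two linearly independent eigenvectors $\vect{v}_1,\vect{v}_2$. Next, form the matrix $P:=[\vect{v}_1\,|\,\vect{v}_2]\in \operatorname{GL}(2,\C)$; since the eigenvectors are linearly independent, $\det P\ne 0$. The only subtlety is ensuring the diagonalizer lies in $\SL(2,\C)$, but this is handled by rescaling: replace $\vect{v}_1$ by $(\det P)^{-1}\vect{v}_1$, so that the new $P$ has determinant one, and set $u:=P^{-1}\in\SL(2,\C)$. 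Note that rescaling an eigenvector does not change its image under the projection $\Pi$.

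Then by direct computation $u\Phi_f u^{-1}$ is the diagonal matrix of eigenvalues, so the flux matrix $\Phi_{\iota_u\circ f}$ of the congruent front $\iota_u\circ f = ufu^*$ is diagonal. Applying Lemma \ref{lem:flux-diagonal} to $\iota_u\circ f$ gives that the axis of $\Phi_{\iota_u\circ f}$ is precisely the $h$-coordinate axis, as required. Alternatively, one can see this directly from the previous lemma: the axis of $\Phi_{\iota_u\circ f}$ is $\overline{u\vect{v}_1,u\vect{v}_2}$, and by construction $u\vect{v}_1 = \begin{pmatrix}1\\0\end{pmatrix}$ and $u\vect{v}_2=\begin{pmatrix}0\\1\end{pmatrix}$, whose $\Pi$-images are $\infty$ and $0$, respectively.

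There is essentially no hard step here; the argument is a clean linear-algebra reduction, and the only thing to watch is the determinant-one normalization. The role of the hypothesis $\alpha\ne 0$ is exactly to guarantee diagonalizability via Theorem \ref{thm:flux-eigenvalue}; in the horospherical case $\alpha=0$ the flux matrix is nilpotent and this procedure would fail, which is consistent with the statement.
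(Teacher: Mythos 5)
Your proposal is correct and follows essentially the same route as the paper, which simply notes that $\Phi_f$ is diagonalizable when $\alpha\ne 0$ and cites the two preceding lemmas; you merely spell out the construction of the diagonalizing matrix in $\SL(2,\C)$ (including the harmless determinant normalization) that the paper leaves implicit.
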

\begin{proof}
 As we have seen, $\Phi_f$ is diagonalizable if $\alpha \ne 0$. 
 Hence this lemma is a direct consequence of the two lemmas above. 
\end{proof}

\subsection*{The indentation number}
In this subsection, we introduce the {\em maximum indentation number\/}
$n$ of a regular WCF-end, which is a positive integer.
Firstly, we will define a positive integer $l_\gamma$,
called the {\em indentation number},
determined by the choice of geodesic $\gamma$ asymptotic to 
the point $G(0)=G_*(0)\in \partial H^3$ of $f$.
Then $n$ is the maximum of $l_\gamma$ for such geodesics:

Let $f\colon{}D^*\to H^3$ be a regular WCF-end of a flat front
and take a (parametrized) geodesic $\gamma(s)$ 
($s\in \R$) in $H^3$ whose endpoint
\[
  \gamma(+\infty):=\lim_{s\to +\infty}\gamma(s)\in  \partial H^3
          =\C\cup\{\infty\}
\]
coincides with $G(0)=G_*(0)\in\partial H^3$.
Here, we identify $\partial H^3$ with $\C\cup\{\infty\}$ as in
\eqref{eq:asymptotic} in the appendix.
Then there exists a rigid motion $\iota_u$ in $H^3$ ($u\in \SL(2,\C)$)
such that 
\begin{equation}\label{eq:normal-axis}
   \iota_u\circ\gamma(-\infty)=\infty,\qquad
   \iota_u\circ\gamma(+\infty)=u\star G(0)=0.
\end{equation}
Note that such a $u\in\SL(2,\C)$ is unique up to the change
\begin{equation}\label{eq:normalize-amb}
   u \longmapsto \begin{pmatrix}
		  \delta & 0 \\
		    0 & \delta^{-1}
		  \end{pmatrix} u
		  \qquad (\delta\in\C\setminus\{0\}).
\end{equation}
We may assume that the pair $(G,\omega)$ is a dominant pair,
and define a meromorphic function 
\begin{equation}\label{eq:ratio}
   A_\gamma:=\frac{d(u\star G_*)}{d(u\star G)},
\end{equation}
which does not depend on the ambiguity as in \eqref{eq:normalize-amb},
that is, depends only on $\gamma$.
When $A_\gamma(z)$ is not a constant function,
the ramification order $l(=l_\gamma)$ of the
meromorphic function $A_\gamma(z)$ at $z=0$
is called the {\it indentation number} with respect to
the geodesic $\gamma$. 
On the other hand, if $A_\gamma(z)$ is constant, we set 
\begin{equation}\label{eq:l-gamma-rot}
  l_\gamma:=\infty.
\end{equation}
This exceptional case corresponds exactly to
the case that $f$ is rotationally symmetric with respect to the
axis $\gamma$ (see Example~\ref{ex:revolution}).

Since $(G,\omega)$ is dominant, the ramification number of $G$ at $0$
is equal to the multiplicity $m$ of the end.
Then, replacing $f$ by $\iota_u\circ f$
for a suitable $u \in \SL(2,\C)$,
there exists a complex coordinate $z$ on a neighborhood of the origin
such that
\begin{equation}\label{eq:normal-gauss-coord}
  G(z) = z^m.
\end{equation}
Unless $A_\gamma(z)$ is constant, the other hyperbolic Gauss map
$G_*(z)$ can be written as 
\begin{equation}\label{eq:normal-gauss-coord2}
   G_*(z)=\alpha z^m+\alpha_1 z^{m+l}+o(z^{m+l}),
       \qquad (\alpha_1\ne 0,~ l=l_\gamma), 
\end{equation}
where $\alpha$ is the ratio of the Gauss maps
(see Remark~\ref{rem:dominant-pair}).

We denote by $[\gamma]$ the image of the geodesic $\gamma$.
\begin{lemma}\label{lem:indentation}
 Let $f\colon{}D^*\to H^3$ be a regular WCF-end of a flat front
 with multiplicity $m$.
 Then one of the following three cases 
occurs{\rm:}
 \begin{enumerate}
 \item\label{item:indent:0} 
      The end $f$ is horospherical, and the indentation number
      $l_{\gamma}$ does not depend on the choice of geodesic
      $\gamma$.
 \item\label{item:indent:1}
      The end $f$ is not horospherical, and
      the indentation number 
      $l_{\gamma}$ does not depend on the choice of geodesic
      $\gamma$.  Moreover, $l_{\gamma}<m$ holds.
  \item\label{item:indent:2} 
       The end $f$ is not horospherical, and 
       there exists a unique geodesic $\sigma$ 
       satisfying $\sigma(+\infty)=G(0)$
       such that 
       \[
         l_\gamma \begin{cases} 
		   =m \qquad& 
		   ([\gamma]\ne [\sigma]), \\
		   >m \qquad& 
		   ([\gamma]  = [\sigma]) 
		\end{cases}
       \]
       holds for each geodesic $\gamma$ satisfying
 $\gamma(+\infty)=G(0)$.
 \end{enumerate}
\end{lemma}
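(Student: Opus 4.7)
The strategy is to reduce to a single one-parameter family of M\"obius transformations, compute how the ratio of the Gauss maps transforms under a change of axis, and read off $l_{\gamma'}$ from orders of vanishing.

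First, for the initially chosen $\gamma$ I would apply the isometry $\iota_u$ and coordinate change leading to \eqref{eq:normal-gauss-coord}--\eqref{eq:normal-gauss-coord2}. Assuming $(G,\omega)$ is dominant, this normalizes the data to
\[
G(z) = z^m,\qquad G_*(z) = \alpha z^m + \alpha_1 z^{m+l} + o(z^{m+l}),
\]
with $\alpha_1 \neq 0$ and $l = l_\gamma$. The exceptional case where $A_\gamma$ is constant (so $l_\gamma = \infty$, corresponding to rotational symmetry as in \eqref{eq:l-gamma-rot}) is handled in parallel: then $G_* = \alpha z^m$ identically, i.e.\ the same formula with $\alpha_1 = 0$.

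Second, any other geodesic $\gamma'$ satisfying $\gamma'(+\infty) = G(0) = 0$ is obtained by composing the normalizing isometry with a lower-triangular element of $\SL(2,\C)$. After absorbing the ambiguity \eqref{eq:normalize-amb} into the diagonal, I parametrize these by
\[
v_c = \begin{pmatrix} 1 & 0\\ c & 1 \end{pmatrix}, \qquad c \in \C,
\]
with $c = 0$ giving $\gamma'=\gamma$ and otherwise $\gamma'(-\infty) = -1/c$. The new normalizing coordinate $w$ is determined by $v_c\star G(z) = w^m$, so that $z^m = w^m/(1 - cw^m)$ and hence $z = w + (c/m)w^{m+1} + O(w^{2m+1})$.

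Third, I would expand $v_c\star G_* = G_*(cG_*+1)^{-1}$ as a geometric series in $G_*(z)$, substitute the expansion of $G_*$, and re-expand in $w$. The bookkeeping yields the key formula
\[
(v_c\star G_*)(w) = \alpha w^m + c\alpha(1-\alpha)\, w^{2m} + \alpha_1 w^{m+l} + (\text{higher order}),
\]
so that $l_{\gamma'}$ is read off from the first nonzero coefficient after $\alpha w^m$, i.e.\ from the terms at orders $w^{2m}$ and $w^{m+l}$ (which coincide when $l=m$). The three cases of the lemma now follow by inspection: if $\alpha = 0$ (horospherical), then $c\alpha(1-\alpha) \equiv 0$, so $l_{\gamma'} = l$ for every $c$, giving (1); if $\alpha \neq 0$ and $l < m$, then $m+l < 2m$, so $\alpha_1 w^{m+l}$ always dominates and $l_{\gamma'} = l < m$ for every $c$, giving (2); if $\alpha \neq 0$ and $l \geq m$, then the $w^{2m}$ term dominates for generic $c$ with coefficient $c\alpha(1-\alpha) + \delta_{l,m}\alpha_1$ (where $\delta_{l,m}$ is the Kronecker delta), giving $l_{\gamma'}=m$, except at the single value $c_0 = 0$ (if $l>m$) or $c_0 = -\alpha_1/(\alpha(1-\alpha))$ (if $l=m$), for which the coefficient vanishes and $l_{\gamma'} > m$; this $c_0$ picks out the unique geodesic $\sigma$ of (3).

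The main technical obstacle is the expansion in the third step: I must confirm that no additional contribution to the $w^{2m}$ or $w^{m+l}$ coefficients arises from the coordinate change $z = w + (c/m)w^{m+1} + \cdots$ applied to higher-order terms of $G_*$, and that the rotationally symmetric case ($\alpha_1 = 0$) is covered by the same formula through a direct computation of $v_c\star(\alpha z^m)$ in the $w$-variable.
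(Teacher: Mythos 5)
Your proposal is correct and follows essentially the same route as the paper: normalize $G=z^m$, parametrize the geodesics ending at $G(0)$ by lower-triangular M\"obius transformations, expand, and run the same trichotomy on $\alpha=0$ versus $\alpha\ne 0$ and $l<m$, $l=m$, $l>m$ (your exceptional value $c_0=-\alpha_1/(\alpha(1-\alpha))$ matches the paper's $a=(1-\alpha)\alpha/\alpha_1$ under $a=-1/c$). The only cosmetic difference is that the paper expands $A_\gamma=\bigl(\tfrac{u_{21}G+u_{22}}{u_{21}G_*+u_{22}}\bigr)^2\tfrac{dG_*}{dG}$ directly in the original coordinate $z$ (as in \eqref{eq:transf}), whereas you re-normalize to a coordinate $w$ with $v_c\star G=w^m$ and read the indentation number off the second term of $v_c\star G_*$; the two bookkeepings agree, and the checks you flag at the end do go through.
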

\begin{proof}
 Without loss of generality, we may assume that $G(0)=G_*(0)=0$
 by replacing $(G,G_*)$ with $(u\star G,u\star G_*)$ ($u\in \SL(2,\C)$)
 if necessary.
 We fix a geodesic $\gamma_0$ such that
 \[
   \gamma_0(+\infty)=0,\qquad \gamma_0(-\infty)=\infty
 \]
 and let $l:=l_{\gamma_0}$ be the indentation number with respect to
 $\gamma_0$.

 Firstly, we consider the case $l_{\gamma_0}<\infty$.
 Then we may assume that $G(z)$ and $G_*(z)$
 satisfy \eqref{eq:normal-gauss-coord} and
 \eqref{eq:normal-gauss-coord2} respectively.
 We now take another geodesic $\gamma$
 such that
 \begin{equation}\label{eq:sigma}
  \gamma(+\infty)=0,\qquad 
   (a:=)\gamma(-\infty)\in \C\setminus \{0\},
 \end{equation}
 and set
 \begin{equation}\label{eq:u}
  u\bigl(=(u_{ij})\bigr):=
        \begin{pmatrix}
	   1 & 0 \\
	  -a^{-1} & 1
	\end{pmatrix}.
 \end{equation}
 Then 
 \[
    \iota_u\circ\gamma(+\infty)=0\qquad\text{and}\qquad
    \iota_u\circ\gamma(-\infty)=\infty
 \]
 hold.
 Here, 
 \begin{equation}\label{eq:transf}
   A_\gamma(z)=\frac{d(u\star G_*)}{d(u\star G)}=
    \left (
          \frac{u_{21}G+u_{22}}{%
	        u_{21}G_*+u_{22}}\right)^2 
            \frac{dG_*}{dG}
        =
        \left (
	   \frac{-a^{-1}G\hphantom{_*}+1}{-a^{-1}G_*+1}
               \right)^2 \frac{dG_*}{dG}.
 \end{equation}
 By \eqref{eq:normal-gauss-coord} and
 \eqref{eq:normal-gauss-coord2}, we get
 \[
   A_{\gamma}(z)=
   \left (
        1-\frac{2}{a}(1-\alpha)z^m+o(z^m)
   \right)
   \left (
        \alpha+\frac{(m+l)\alpha_1}{m} z^l+o(z^l)
   \right).
 \]
 If $f$ is horospherical, that is, $\alpha=0$, 
 then the first non-vanishing term of  $A_\gamma(z)$ is $z^l$, 
 and we have $l_\gamma=l$.
 This implies that the indentation number 
 $l_\gamma$ does not depend on $\gamma$, 
 that is, \ref{item:indent:0} holds.

 Next, we consider the case that $f$ is not horospherical,
 that is, $\alpha\ne 0$.
 \begin{description}
  \item[Case a]
     Suppose that $l<m$, then
     \[
        A_\gamma(z)=\alpha+\frac{(m+l)\alpha_1}{m} z^l+o(z^l),
     \]
     which implies $l_\gamma=l$.
     Therefore \ref{item:indent:1} holds.
  \item[Case b]
     Suppose that $l>m$, then we have
     \begin{equation}\label{eq:l>m}
        A_\gamma(z)=\alpha\left (
	    1-\frac{2}{a}(1-\alpha)z^m+o(z^m)
                \right).
     \end{equation}
     Since $\alpha\ne 1$, we have $l_\gamma=m$.
     Thus we have
     \[
        l_{\gamma_0}=l>m=l_{\gamma}\qquad ([\gamma]\ne [\gamma_0]).
     \] 
     This is case \ref{item:indent:2}.
  \item[Case c]
Suppose that $l=m$.
     Then it holds that
     \[
       A_\gamma(z)=\alpha 
          +\left(2\alpha_1
          -\frac{2}{a}(1-\alpha)\alpha\right)z^m
          +o(z^m).
     \]
     Then $l_\gamma=m$ holds, unless
     \[
        a=\gamma(-\infty) = 
	     \frac{(1-\alpha)\alpha}{\alpha_1}.
     \]
     This exceptional value $a$ determines the image $[\gamma]$
     of the geodesic $\gamma$ uniquely. 
     Denoting this geodesic $\gamma$ by $\sigma$, 
     we fall into case  \ref{item:indent:2}.
 \end{description}

 Finally, we consider the case $l_{\gamma_0}=\infty$,
 which implies that $dG_*/dG$ is constant.
 If $f$ is horospherical, 
 it is an $m$-fold cover of the end of the horosphere 
 (see Example~\ref{ex:revolution}).
 Then $G_*$ vanishes
 identically, and $d(u\star G_*)/d(u\star G)$ vanishes for all
 $u\in \SL(2,\C)$. This implies \ref{item:indent:0}. 
 On the other hand, if $f$ is not horospherical,
 that is, $\alpha\ne 0$,
 then we have an expression
 \[
   G(z)=z^m,\qquad G_*(z)=\alpha z^m,
 \]
 which gives an $m$-fold cover of a flat front of revolution
 whose rotational axis
 is $\gamma_0$ (see Example~\ref{ex:revolution}).
 Take a geodesic $\gamma$ $([\gamma]\ne [\gamma_0])$
 and a matrix $u\in \SL(2,\C)$ 
 satisfying \eqref{eq:sigma} and \eqref{eq:u}.
 Then by \eqref{eq:transf}, we have
 \eqref{eq:l>m}, since 
 $dG_*/dG=\alpha$.
 This implies $l_\gamma=m$.
 Therefore \ref{item:indent:2} holds.
\end{proof}

\begin{definition}\label{def:central-axis}
 Let $f$ be as above.
 According to Lemma~\ref{lem:indentation},
 we call $f$ a {\em centerless end\/},
 if it satisfies \ref{item:indent:0} or \ref{item:indent:1}.
 On the other hand,
 $f$ is called a {\em centered end} if
 it satisfies \ref{item:indent:2}.
 When $f$ is a centered end, the unique geodesic $\sigma$
 is called the {\em principal axis\/} of the end $f$.
 The maximum  of the numbers $l_\gamma$, that is,
 \[
    n:=\max\{l_\gamma\,;\, 
   \mbox{$\gamma$ is a geodesic such that $\gamma(+\infty)=G(0)$}\}
 \]
 is called the {\em maximum indentation number\/} of the end $f$.
 If $f$ is a centered end,
 then $n=l_{\sigma}$ holds for the principal axis $\sigma$.
\end{definition}
\begin{remark}\label{rem:central}
Suppose that a regular WCF-end $f$ is non-horospherical 
and embedded. The embeddedness implies that $f$ is 
complete  
(i.e., free of singularities) 
and has multiplicity $m=1$. 
Hence only the case \ref{item:indent:2} in Lemma~\ref{lem:indentation}
occurs for $f$. 
In other words, an embedded, non-horospherical, regular 
WCF-end must be centered. 
\end{remark}
\subsection*{The flux axis and the principal axis}

Definition~\ref{def:central-axis} of the principal axis
for a centered end looks rather technical.
However, it is nothing but the axis of the flux matrix.
In this subsection, we assume that 
{\em the end is not horospherical}.
In fact, for a horospherical end, the only eigenvalue of the flux matrix
$\Phi_f$ is $0$ and hence the axis of the flux matrix cannot be defined.
\begin{theorem}\label{thm:central-flux}
 Let $f\colon{}D^*\to H^3$ be a centered regular WCF-end of a flat front
 in the sense of Definition~\ref{def:central-axis}.
 Then the principal axis coincides with the 
 axis of the flux matrix $\Phi_f$.
\end{theorem}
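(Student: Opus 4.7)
The plan is to reduce, by an ambient isometry of $H^3$, to the situation where the principal axis $\sigma$ coincides with the vertical geodesic $\gamma_0$ through $0$ and $\infty$ in the upper half-space model, and then show that in this normalization $\Phi_f$ is diagonal. This reduction is legitimate because, as already noted in the proof of Theorem~\ref{thm:limit} and the subsequent lemmas, conjugation by $u\in\SL(2,\C)$ sends $\Phi_f$ to $u\Phi_f u^{-1}$ and intertwines its eigenvectors with the boundary M\"obius action via $\Pi(u\vect{v})=u\star\Pi(\vect{v})$, so both the flux axis and the principal axis transform equivariantly. Since the end is non-horospherical we have $\alpha\ne 0$, and since it is centered, Definition~\ref{def:central-axis} provides a unique principal axis $\sigma$; pick $u\in\SL(2,\C)$ with $u\star\sigma(+\infty)=0$ and $u\star\sigma(-\infty)=\infty$ and replace $f$ by $u f u^{*}$.

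With this normalization, choose a complex coordinate $z$ so that $G(z)=z^m$ as in \eqref{eq:normal-gauss-coord}. Since $\gamma_0$ is now the principal axis, we are in case~(3) of Lemma~\ref{lem:indentation}, so $l_{\gamma_0}=n>m$ (including the rotationally symmetric case $n=\infty$). Consequently $G_*(z)=\alpha z^m+\alpha_1 z^{m+n}+o(z^{m+n})$, with no intermediate terms between $\alpha z^m$ and the next correction at order $z^{m+n}$.

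Because $G(0)=0$, Theorem~\ref{thm:limit} already forces $\Phi_f$ to be lower triangular, so to show it is diagonal it suffices to prove that its $(2,1)$-entry vanishes. By Proposition~\ref{prop:flux} this entry is the residue $\Res_{z=0}\bigl[(dG+dG_*)/(G-G_*)^2\bigr]$. Expanding, $dG+dG_*=m(1+\alpha)\,z^{m-1}\,dz+O(z^{m+n-1})\,dz$, while the geometric series yields $(G-G_*)^{-2}=(1-\alpha)^{-2}\,z^{-2m}\bigl[1+O(z^{n})\bigr]$, so the product equals $m(1+\alpha)(1-\alpha)^{-2}\,z^{-m-1}\bigl[1+O(z^{n})\bigr]\,dz$. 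Picking out the $z^{-1}$ coefficient requires a $z^m$ term inside the bracket, but every correction in the bracket sits at an order $z^{kn}$ with $k\ge 1$, and the strict inequality $n>m$ excludes any such contribution. Hence the residue vanishes, $\Phi_f$ is diagonal, and Lemma~\ref{lem:flux-diagonal} identifies its axis as $\gamma_0$, which is precisely the principal axis.

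The main technical subtlety is controlling the bracketed series so that no hidden $z^m$ term appears; this is exactly what the strict inequality $n>m$ from case~(3) of Lemma~\ref{lem:indentation} guarantees after normalization. Once that gap is visible the residue computation is essentially immediate, and the equivariance in the first paragraph transports the conclusion back to the original $f$.
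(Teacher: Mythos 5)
Your proposal is correct and follows essentially the same route as the paper's proof: normalize so the principal axis is the vertical geodesic, invoke the lower-triangularity of $\Phi_f$ from Theorem~\ref{thm:limit}, and show the $(2,1)$-entry vanishes by computing the residue of $(dG+dG_*)/(G-G_*)^2$, which is killed by the strict inequality $n>m$ from case (3) of Lemma~\ref{lem:indentation}. The only cosmetic difference is that the paper treats the rotationally symmetric case separately (and your claim that the bracket's corrections sit at orders $z^{kn}$ is slightly too strong --- they are merely of order at least $n$, which still suffices since $n>m$).
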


\begin{proof}
 Without loss of generality, we may assume that
 $(G,\omega)$ is a dominant pair,
 $G(0)=0$ and  the principal axis $\sigma$
 is the geodesic joining $\infty$ and $0$.
 Then by Lemma~\ref{lem:flux-diagonal},
 the axis of the flux matrix $\Phi_f$ 
 coincides with $\sigma$ if and only if $\Phi_f$ is diagonal.
 Moreover, by Theorem~\ref{thm:limit}, 
 $\Phi_f$ is a lower triangular matrix because $G(0)=0$.
 Then it is sufficient to show the lower-left component
 of the flux matrix vanishes.

 If the end is rotationally symmetric, 
 the assertion is obvious, since 
 the axis of the flux matrix is the rotation axis.
 So we assume that the end is not rotationally symmetric.
 In this case, 
 we may assume
 \eqref{eq:normal-gauss-coord},
 \eqref{eq:normal-gauss-coord2},
 and
 \[
    \alpha\neq 0,1,\qquad m<l_{\sigma}=l<\infty,
 \]
 where $l$ is the maximum indentation number.
 (Here, $\alpha\neq 0$ because the end is not horospherical,
 $\alpha\neq 1$ because the end is of finite type,
 and $l_{\sigma}>m$ because the end is centered.)

 Substituting these into \eqref{eq:two-equalities-in-one-line},
 the lower-left component of $(\partial f)f^{-1}$ is computed as
 \begin{multline*}
  -\frac{dG+dG_*}{(G-G_*)^2} =
      -\frac{m(1+\alpha)z^{m-1}+(m+l)\alpha_1z^{m+l-1}+o(z^{m+l-1})}{%
            \bigl((1-\alpha)z^m-\alpha_1z^{m+l}+o(z^{m+l})\bigr)^2}
            \,dz\\
      =
      -\frac{m(1+\alpha)}{(1-\alpha)^2}z^{-m-1}
      \left[1+
        \alpha_1\left(\frac{2}{1-\alpha}+\frac{m+l}{m(1+\alpha)}
  \right)
        z^l + o (z^l)\right]\,dz.
 \end{multline*}
 Since $l>m(\ge 1)$, the residue of this form at the origin vanishes.
 This completes the proof.
\end{proof}

\subsection*{Normalization of dominant pairs}
To prove the main theorems, we introduce the 
normalized form of the canonical form $\omega$:
\begin{lemma}\label{lem:normalized-omega}
 Let $f\colon{}D^*\to H^3$ be a regular WCF-end of a flat front
 with $G(0)=0$,
 and $l=l_{\gamma}$ the indentation number with 
 respect to the geodesic $\gamma$ joining  $\infty$ and $0$.
 Assume $l_{\gamma}<\infty$, that is, $f$ is not rotationally symmetric
 with respect to $\gamma$ {\rm(}see \eqref{eq:l-gamma-rot}{\rm)}.
 Suppose that $(G,\omega)$ is a dominant pair
 {\rm (}see Remark \ref{rem:dominant-pair}{\rm)}.
 Then there exist a {\rm (}unique{\rm )} local complex coordinate $z$
 around the origin and a diagonal matrix $u\in \SL(2,\C)$
 such that
\begingroup
 \renewcommand{\theenumi}{{\rm(\roman{enumi})}}
 \renewcommand{\labelenumi}{{\rm(\roman{enumi})}}
 \begin{enumerate}
  \item\label{item:can-omega-non-cyl}
        if the end is not cylindrical, 
	\begin{equation}\label{eq:norm-omega-non-cyl}
	   u\star G=z^m\quad\text{and}\quad
	   \omega = -m z^\mu \bigl(1+bz^l+o(z^l)\bigr)^2\,dz\qquad
	   (\mu < -1,~b\in \R_+),
	\end{equation}
  \item\label{item:can-omega-cyl}
        if the end is cylindrical, 
	\begin{equation}\label{eq:norm-omega-cyl}
	    u\star G=z^m\quad\text{and}\quad
           \omega = -\lambda z^{-1} \bigl(1+z^l+o(z^l)\bigr)^2\,dz\qquad
	   (\lambda \in\R_+),
	\end{equation}
 \end{enumerate}
 \endgroup
\noindent
 where $m$ is the multiplicity of the end;
 moreover, the end is incomplete if and only if $\lambda={m}/{2}$.
\end{lemma}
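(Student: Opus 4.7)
The plan is to specify $(z,u)$ in two stages: first use a diagonal $u$ and a holomorphic coordinate change to normalize $G$ to $z^m$, and then use the residual symmetries (coordinate rescalings preserving $G=z^m$, together with the $\U(1)$-ambiguity of the Legendrian lift) to normalize $\omega$.

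First I would normalize $G$. Since $(G,\omega)$ is dominant and $G(0)=0$, the ramification order of $G$ at $0$ equals the multiplicity $m$; writing $G=z^m g(z)$ with $g(0)\ne 0$, I apply the diagonal matrix $\begin{pmatrix}\delta & 0\\0 & \delta^{-1}\end{pmatrix}$ with $\delta^2=g(0)^{-1}$ and then change coordinates by $w=z\bigl(g(z)/g(0)\bigr)^{1/m}$ to achieve $G(w)=w^m$. The residual freedom preserving $G=z^m$ is the one-parameter family $z\mapsto\lambda z$ (accompanied by the diagonal matrix with $\delta^2=\lambda^{-m}$, which acts trivially on $\omega$) together with the $\U(1)$-ambiguity $\omega\mapsto e^{\imag\tau}\omega$ from \eqref{eq:u-one-amb}. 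Under this combined action on any form $A(z)\,dz$, we have $A(z)\,dz\mapsto e^{\imag\tau}\lambda\,A(\lambda z)\,dz$.

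Next, I would compute $\omega$ explicitly in the $G=z^m$ frame. Writing $G_*=\alpha z^m+\alpha_1 z^{m+l}+o(z^{m+l})$ with $\alpha_1\ne 0$ (the normal form \eqref{eq:normal-gauss-coord2}, available because $l_\gamma<\infty$), a direct expansion yields
\[
 \int\frac{dG}{G-G_*}
  =\frac{m}{1-\alpha}\log z+\frac{m\alpha_1}{l(1-\alpha)^2}z^l+o(z^l),
\]
and then $\omega=-dG/\xi^2$ from \eqref{eq:can-hopf} gives
\[
 \omega=\omega_0\,z^\mu\bigl(1+Bz^l+o(z^l)\bigr)^2\,dz,
 \quad\omega_0\in\C\setminus\{0\},\quad
 B=-\frac{m\alpha_1}{l(1-\alpha)^2}\ne 0,
\]
with $\mu=-(1+\alpha)m/(1-\alpha)-1$ as in \eqref{eq:alpha-mu}. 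Under the residual action, $(\omega_0,B)\mapsto(\omega_0\lambda^{\mu+1}e^{\imag\tau},\,B\lambda^l)$.

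In the non-cylindrical case $\mu<-1$ (so $\mu+1\ne 0$), I impose $\omega_0\lambda^{\mu+1}e^{\imag\tau}=-m$ and $B\lambda^l\in\R_+$: the modulus of the first equation uniquely determines $|\lambda|$, the second equation fixes $\arg\lambda$ modulo $2\pi/l$, and the phase of the first then fixes $\tau$, producing \eqref{eq:norm-omega-non-cyl} with $b:=|B|\,|\lambda|^l>0$. In the cylindrical case $\mu=-1$, $\alpha=-1$, the modulus $|\lambda|$ drops out of the leading-coefficient condition, so I choose $\tau$ to make $\omega_0 e^{\imag\tau}$ real negative, defining $\lambda:=|\omega_0|>0$ (the $\lambda$ of \eqref{eq:norm-omega-cyl}), while the condition $B\lambda_{\mathrm{sc}}^l=1$ uniquely fixes the coordinate scaling $\lambda_{\mathrm{sc}}$ modulo $l$-th roots of unity. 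Finally, for the incompleteness criterion I invoke Lemma~\ref{lem:incomplete}, which says incompleteness is equivalent to $|\rho(0)|=1$; in the cylindrical setting \eqref{eq:can-hopf} gives $Q=\frac{m^2}{4z^2}(1+o(1))\,dz^2$, so $\theta=Q/\omega=-\frac{m^2}{4\lambda z}(1+o(1))\,dz$ and $\rho(0)=m^2/(4\lambda^2)$; with $\lambda>0$, $|\rho(0)|=1$ forces $\lambda=m/2$. The main obstacle will be the careful logarithmic expansion of $\xi$ and the bookkeeping of three real normalization conditions against the three real residual parameters ($\lambda\in\C\setminus\{0\}$ and $\tau\in\R$); uniqueness of $(z,u)$ should be understood up to the natural $l$-fold ambiguity in $\arg\lambda$.
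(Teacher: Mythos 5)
Your proposal is correct and follows essentially the same route as the paper: put $G$ into the normal form $z^m$, read off $\omega=c_0z^{\mu}(1+b_0z^l+o(z^l))^2\,dz$ from \eqref{eq:gauss-repr} and \eqref{eq:can-hopf}, then absorb $c_0$ and $\arg b_0$ by a linear coordinate rescaling combined with the $\U(1)$-ambiguity and a compensating diagonal matrix, and finally deduce the incompleteness criterion from $\rho=Q/\omega^2=\tfrac{m^2}{4\lambda^2}(1+o(1))$ and Lemma~\ref{lem:incomplete}. The only difference is cosmetic: you carry out explicitly the logarithmic expansion of $\xi$ that the paper leaves as ``a direct calculation,'' and your bookkeeping of the residual parameters (including the $l$-fold ambiguity in $\arg\lambda$) matches the paper's choices $\kappa=|m/c_0|^{1/(\mu+1)}$, $\beta=-\arg b_0/l$ and $k=b_0^{-1/l}$.
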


\begin{proof}[Proof of Lemma~\ref{lem:normalized-omega}]
 We can take a coordinate $z$ such that
 $G$ and $G_*$ are written as \eqref{eq:normal-gauss-coord}
 and \eqref{eq:normal-gauss-coord2}, respectively.
 Substituting these into \eqref{eq:gauss-repr} and \eqref{eq:can-hopf},
 a direct calculation verifies that $\omega$ has the following
 expression:
 \begin{equation}\label{eq:omega-first}
   \omega  = c_0 z^{\mu} \left(1+b_0 z^l +o(z^l)\right)^2\,dz
 \end{equation} 
 where $c_0$ and $b_0$ are non-zero constants which can be
 computed explicitly, and 
 $\mu$ is given by the relation \eqref{eq:alpha-mu} in
 Proposition~\ref{prop:alpha-mu-new}.
 Moreover, by Proposition~\ref{prop:alpha-mu-new}, $\mu\leq -1$
 holds.
 Note that $b_0\neq 0$ because $f$ is not rotationally symmetric
 with respect to $\gamma$.

\paragraph{\ref{item:can-omega-non-cyl}}
 First, we assume that the end is non-cylindrical,
 that is, $\mu<-1$ holds.
 Let $k$ be a non-zero constant and take a new coordinate $w$
 as $z=kw$.
 Then $\omega$ is written as
 \[
    \omega = c_0 k^{\mu+1}w^{\mu}
         \bigl(1+k^lb_0w^l+o(w^l)\bigr)^2\,dw.
 \]
 Here, choose $k$ so that
 \[
   k = \kappa e^{\imag \beta},\qquad\text{where}\quad
    \kappa=\left|\frac{m}{c_0}\right|^{\frac{1}{\mu+1}},\quad
    \beta = -\frac{\arg b_0}{l}.
 \]
 Then we have
 \begin{multline*}\label{eq:omega-pre}
    \omega = -m e^{\imag\tau}
              w^{\mu}
              \bigl(
                1+b w^l+o(w^l)
              \bigr)^2\,dw,\\
    \text{where}\quad
     b = \kappa^l|b_0|\in\R_+,\quad
     \tau = \arg c_0+\beta(\mu+1)+\pi.
 \end{multline*}
 Using the $\U(1)$-ambiguity as in \eqref{eq:u-one-amb}, we 
 can write $\omega$ as
 \[
    \omega = -m w^{\mu}\bigl(1+b w^l+o(w^l)\bigr)^2 \,dw
 \]
 On the other hand,
 $G$ is written as $G=z^m = k^m w^m$.
 Let
 \begin{equation}\label{eq:u-adjust}
      u = \begin{pmatrix}
	     k^{-m/2} & 0 \\
	     0 & k^{m/2}
	  \end{pmatrix}\in\SL(2,\C).
 \end{equation}
 Then $u\star G = w^m$ and 
 the isometry $\iota_u$ of $H^3$ preserves $0$ and $\infty$
 in $\partial H^3$.
Finally, we need only to recall that $\omega$ 
is unchanged when $f$ changes to $\iota_u \circ f$. 
 
\paragraph{\ref{item:can-omega-cyl}}
 If the end is cylindrical, $\mu=-1$ holds.
 Take a coordinate $w$ as $z=kw$ for a non-zero constant $k$.
 Then $\omega$ in \eqref{eq:omega-first} is written as
 \[
   \omega = c_0 w^{-1}\bigl(1+k^l b_0 w^l+o(w^l)\bigr)^2\,dw.
 \]
 In this case, if we set $k=b_0^{-1/l}$, we have
 \[
   \omega = -\lambda e^{\imag\tau} w^{-1}
            \bigl(1+w^l+o(w^l)\bigr)^2\,dw
    \qquad\bigl(\lambda=|c_0|,\tau=\arg c_0+\pi).
 \]
 Then again using the $\U(1)$-ambiguity and the matrix
 $u$ in \eqref{eq:u-adjust}, we have 
 \eqref{eq:norm-omega-cyl}.

 Now, we shall prove the last assertion:
 By \eqref{eq:Q-top} and \eqref{eq:norm-omega-cyl},
 $\rho$ is expanded as
 \[
    \rho = \frac{\theta}{\omega} =
           \frac{\theta\omega}{\omega^2} 
         = \frac{Q}{\omega^2}
         = \frac{m^2}{4\lambda^2}\bigl(1+o(1)\bigr).
 \]
 Since $\lambda\in\R_+$, 
 Lemma~\ref{lem:incomplete} implies that the end is incomplete if and
 only if $\lambda=m/2$.
\end{proof}
\section{Asymptotic behavior of regular WCF-ends}
\label{sec:main}
In this section, we shall state and prove refinements of 
Proposition~\ref{fact:complete-asymptotic}
in the introduction.
We also give a proof of Theorem~\ref{thm:incomplete} here.

\subsection*{Notations---A family of horospheres}
To state the results, we prepare notations:
Let $\gamma$ be an oriented geodesic in $H^3$ and fix a point
$o\in\gamma$.
Parametrize $\gamma$ by the arclength parameter $s$ such that 
$\gamma(0)=o$.
For each $s\in\R$,
we denote by $\Horo_{\gamma}(o,s)$
the horosphere which meets $\partial H^3$ at $\gamma(-\infty)$
and intersects  $\gamma$ perpendicularly at 
the point $\gamma(s)$,
see Figure~\ref{fig:horospheres}, left.
Since each $\Horo_{\gamma}(o,s)$ is isometric to the Euclidean plane,
one can introduce a canonical coordinate system
on $\Horo_{\gamma}(o,s)$ such that $\gamma(s)$ corresponds to the origin
of the Euclidean plane as follows (see \eqref{eq:horospheres-isometry}):
We wish to work in the upper half-space model $\R^3_+$ of 
$H^3$ as in \eqref{eq:proj0} and \eqref{eq:upper-metric}.
We always project $H^3$ to $\R^3_+$ so that $\gamma$ is mapped to the 
downward oriented $h$-axis (vertical axis) and $o$ is mapped to 
$(0,1)\in \C\times \R_+=\R^3_+$.
Then 
\begin{equation}\label{eq:horospheres-canonical}
 \Horo_{\gamma}(o,s) =
  \{(\zeta,e^{-s})\,|\,\zeta\in\C\}\subset \C\times \R_+
        =\R^3_+.
\end{equation}
In this case,
the isometry between $\Horo_{\gamma}(o,s)\subset H^3$
and the Euclidean plane
is given by 
\begin{equation}\label{eq:horospheres-isometry}
   \hat\pi:\Horo_{\gamma}(o,s)\ni (\zeta,h) = (\zeta,e^{-s})
       \longmapsto \frac{\zeta}{h}=e^s\zeta\in\C=\Euc^2,
\end{equation}
because of \eqref{eq:upper-metric},
where $\Euc^2$ denotes the Euclidean plane 
$(\R^2;x,y)$ with the canonical metric $dx^2+dy^2$.
See Figure~\ref{fig:horospheres}, right.
\begin{figure}[t]
 \begin{tabular}{c@{\hspace{2em}}c}
  \includegraphics{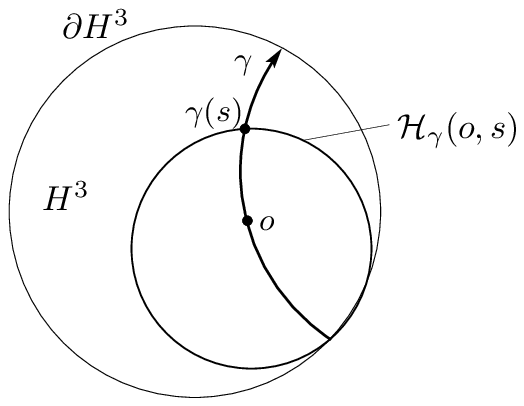} &
  \includegraphics{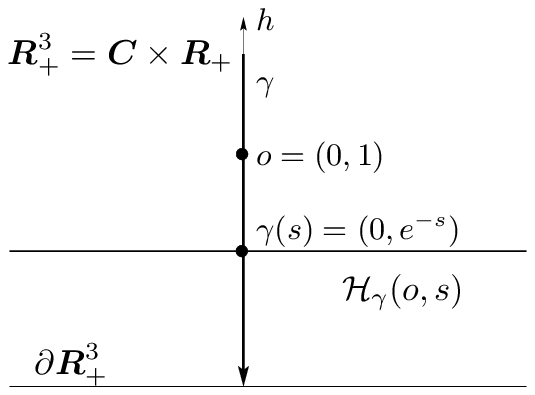} 
 \end{tabular}
\caption{A family of horospheres}\label{fig:horospheres}
\end{figure}
\subsection*{Statements of the theorems}
First, we consider the non-cylindrical case.
In this case, all ends are complete by Lemma~\ref{lem:incomplete}.
\begin{theorem}[Non-cylindrical case]\label{thm:non-cyl}
 Let $f\colon{}D^*\to H^3$ be a non-cylindrical regular WCF-end
 of a flat front with multiplicity $m$, and
 let $\gamma$ be a geodesic in $H^3$ with
 $\gamma(+\infty)=G(0)\in\partial H^3$,
 where $G$ is the hyperbolic Gauss map.
 Then there exists a unique point $o\in \gamma$
 such that, for $s$ large enough, 
$f(D^*)\cap\Horo_{\gamma}(o,s)$ is 
 identified with a curve in $\C=\Euc^2$
 parametrized by $t$ as 
 \begin{equation}\label{eq:non-cyl-0}
      e^{\imag m t}h^p\bigl(1+R_{\gamma}(h,t)\bigr)\qquad
     \left(p = -\frac{1+\alpha}{2}\in (-1,0)\right)
 \end{equation}
 using $\hat\pi$ in \eqref{eq:horospheres-isometry},
 where $\alpha$ is the ratio of the Gauss maps \eqref{eq:gauss-ratio},
 $h=e^{-s}$, and $R_{\gamma}$ is a complex-valued function of two real
 variables $(h,t)$ such that
 \[
     \lim_{h\to +0}R_{\gamma}(h,t) =0.
 \]
 Moreover,
 assume the indentation number $l_{\gamma}$ with respect to $\gamma$
 is finite {\rm(}see \eqref{eq:l-gamma-rot}{\rm)}.
 Then, setting
 \begin{equation}\label{eq:non-cyl-data}
  \begin{aligned}
	  N_{p,j,m}(h,t)&=\{2(p+1)\cos jt\}h^{\beta},\quad
         \text{where}\quad
            \beta =\beta_{p,j,m}= \frac{j(1+p)}{m}>0,\\
          S_p(h) &= -\frac{1}{4(p+1)}h^{-2p}
  \end{aligned}
 \end{equation}
 for positive integers $j$, $m$ and a real number $p\in (-1,0)$,
 we have
 \begingroup
 \renewcommand{\theenumi}{{\rm (\roman{enumi})}}
 \renewcommand{\labelenumi}{{\rm (\roman{enumi})}}
 \begin{enumerate}
  \item\label{item:non-cyl-1}
       If $f$ is a centerless end, then
       for any geodesic $\gamma$ with $\gamma(+\infty)=G(0)$,
       there exists  $b\in\R_+$ 
       {\rm(}when $b$ is used{\rm )}
       such that
	\begin{equation}\label{eq:non-cyl-asymptotic}
	   R_{\gamma}(h,t)=
          \begin{cases}
	     b N_{p,n,m}(h,t) + o(h^{\beta})\qquad
                 &(\text{if $n(1+p)<-2pm$}),\\
	     bN_{p,n,m}(h,t)+S_p(h)+o(h^{\beta}) \qquad
                 &(\text{if $n(1+p)=-2pm$}),\\
	     S_p(h) + o(h^{-2p})
                 &(\text{if $n(1+p)>-2pm$}),
	   \end{cases}
	\end{equation}
       where $n$ is the maximum indentation number, 
       and $\beta=\beta_{p,n,m}$ as in \eqref{eq:non-cyl-data}.
   \item\label{item:non-cyl-2}
       If $f$ is a centered end, then
	for the principal axis $\sigma$,
       there exists  $b\in\R_+$ 
       such that
       $R_{\sigma}(h,t)$ is written as in \eqref{eq:non-cyl-asymptotic},
       where $n(>m)$ is the maximum indentation number.
 \end{enumerate}
\endgroup
\end{theorem}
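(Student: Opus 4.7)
The strategy is to reduce the statement to an explicit computation in the Poincar\'e upper half-space, after placing the end in a normal form. First I would apply an isometry of $H^{3}$ that moves $\gamma$ onto the downward $h$-axis, sending $\gamma(+\infty)=G(0)$ to $0\in\partial H^{3}$. Lemma~\ref{lem:normalized-omega} then lets me adjust the local complex coordinate $z$ near the puncture and conjugate by a diagonal element of $\SL(2,\C)$---which fixes $\gamma$ setwise and only slides a base point along $\gamma$---so that simultaneously
\[
  G=z^{m}, \qquad \omega = -m\,z^{\mu}\bigl(1+b\,z^{l}+o(z^{l})\bigr)^{2}\,dz,
\]
with $\mu<-1$, $b\in\R_{+}$, and $l=l_{\gamma}$ (if $f$ is rotationally symmetric about $\gamma$ one instead has $b=0$, an easier subcase). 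The uniqueness of this diagonal normalization gives the uniqueness of the base point $o\in\gamma$ claimed in the theorem. Combining $p=-(1+\alpha)/2$ with Proposition~\ref{prop:alpha-mu-new} yields $\mu-m+1=-m/(1+p)$, the key identity driving all subsequent orders.

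Next, substituting these normal forms into the representation formula~\eqref{eq:g-omega-repr}, one obtains $C=\imag\sqrt{\omega/dG}=z^{-m/(2(1+p))}(1+bz^{l}+O(z^{2l}))$ after absorbing an overall phase via the $\U(1)$-ambiguity~\eqref{eq:u-one-amb}. Then $|C|^{2}$ has order $|z|^{-m/(1+p)}$ while $dC/\omega$ has order $|z|^{-m(1+2p)/(2(1+p))}$, strictly smaller because $p<0$. Applying the projection~\eqref{eq:proj-mat} to $\E_{f}$ gives
\[
  h(z) = \frac{1}{|C|^{2}+|dC/\omega|^{2}}, \qquad
  \frac{\zeta(z)}{h(z)} = z^{m}|C|^{2}+\overline{(dC/\omega)}\,\frac{d(z^{m}C)}{\omega}.
\]
Writing $z=re^{\imag\phi}$ and keeping leading orders yields $h\sim r^{m/(1+p)}$ and $e^{s}\zeta=\zeta/h\sim e^{\imag m\phi}r^{mp/(1+p)}\sim e^{\imag m\phi}h^{p}$. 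Setting $t=\phi$ produces~\eqref{eq:non-cyl-0}, and $R_{\gamma}(h,t)\to 0$ follows from $p<0$ and $l\ge 1$.

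The refinement~\eqref{eq:non-cyl-asymptotic} is obtained by extending the expansion of $C$, $dC/\omega$, and the numerator and denominator above one further order, then inverting the relation between $z$ and $(t,h)$ on each horosphere slice. Two independent sources of sub-leading correction appear: the $bz^{l}$-perturbation in $\omega$ propagates into $|C|^{2}$ and into the numerator of $e^{s}\zeta$ as a factor $1+2b\,r^{l}\cos(l\phi)+O(r^{2l})$, which after reverting $r$ as a function of $h$ becomes a term of relative order $h^{l(1+p)/m}$ with angular part $2(p+1)\cos(lt)$, producing the $bN_{p,n,m}$-contribution once $l=n$ is identified via Lemma~\ref{lem:indentation} (centerless case) or via the forced choice $\gamma=\sigma$ from Definition~\ref{def:central-axis} (centered case); the second source is a purely radial contribution coming from the $|dC/\omega|^{2}$ term in the denominator together with the cross-term in the numerator, yielding the universal coefficient $-1/(4(p+1))$ and hence the $S_{p}(h)$-term of order $h^{-2p}$. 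The three-way split in~\eqref{eq:non-cyl-asymptotic} is then simply the comparison of the two exponents $l(1+p)/m$ and $-2p$.

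The main obstacle will be the bookkeeping in this last step: computing the exact coefficient $-1/(4(p+1))$ and verifying that the $z^{l}$-perturbation collapses angularly to $2(p+1)\cos(lt)$ with no spurious $\sin(lt)$-term surviving. The structural reason this succeeds is that once the leading factor $e^{\imag mt}h^{p}$ is divided out, the remainder is forced to be real-analytic in $(z,\bar z)$, so every angular correction enters via the real pairs $z^{j}+\bar z^{j}$; since $b\in\R_{+}$ in the normal form of Lemma~\ref{lem:normalized-omega}, each such pair yields an honest cosine with real coefficient.
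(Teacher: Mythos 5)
Your proposal is correct and follows essentially the same route as the paper's proof: normalize $(G,\omega)$ via Lemma~\ref{lem:normalized-omega} so that $\gamma$ is the $h$-axis and $o=(0,1)$, compute the entries of $\E_f$ from \eqref{eq:g-omega-repr}, read off $1/h=|C|^2+|dC/\omega|^2$ and $\zeta/h=E_{11}\overline{E_{21}}+E_{12}\overline{E_{22}}$ from \eqref{eq:proj-mat}, invert $r$ as a function of $h$, and split into the three cases by comparing the exponents $l(1+p)/m$ and $-2p$, with $l$ identified as $n$ via Lemma~\ref{lem:indentation} (centerless) or the choice $\gamma=\sigma$ (centered). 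The key quantities you isolate --- the exponent $-m(1+2p)/(2(1+p))$ for $dC/\omega$, the angular factor $2(p+1)\cos lt$ arising from combining the direct $2br^l\cos lt$ term with the reversion of $\eta=r^{m/(1+p)}$ in terms of $h$, and the universal coefficient $-1/(4(p+1))$ from the $|E_{22}|^2$ and $E_{12}\overline{E_{22}}$ contributions --- all match the paper's computation.
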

Figure~\ref{fig:nodes} shows the end with $G=z$,
$\omega=-z^{-5}\exp(2z^3)\,dz$ in 
the upper half-space model (left) and the view of it from the
bottom (right).
In each of two figures,
the end point corresponds to the center of the figure.
\begin{figure}
  \begin{center}
   \includegraphics[width=0.6\textwidth]{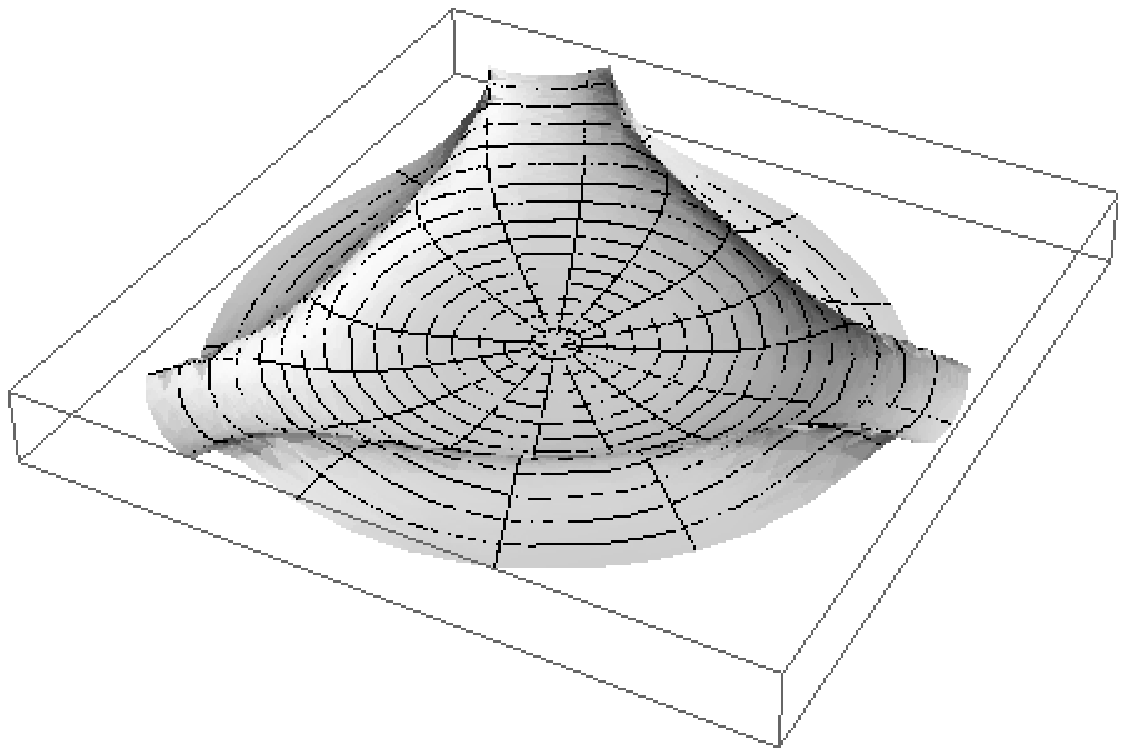}  
   \includegraphics[width=0.36\textwidth]{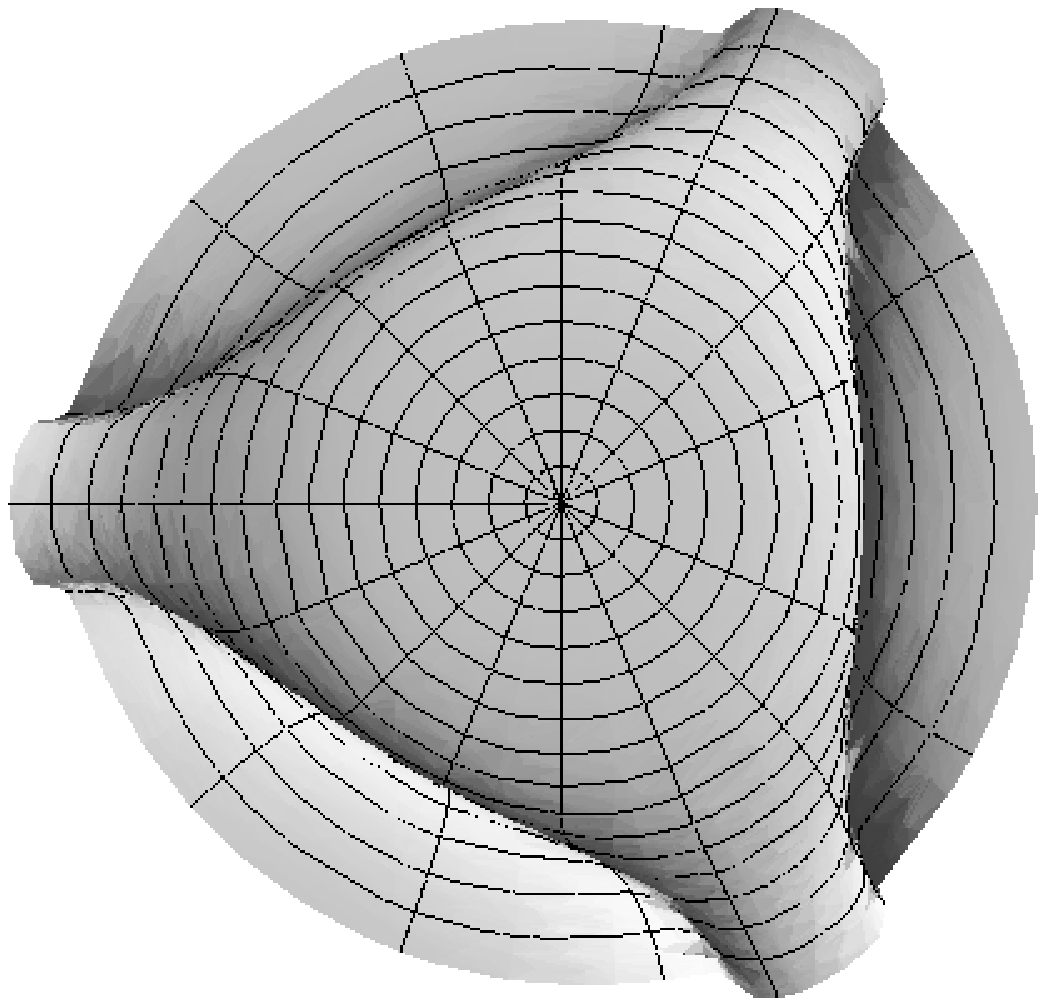} 
  \end{center}
  \caption{%
    A centered end with $m=1$, $n=3$ and $p=-0.8$.}
  \label{fig:nodes}
\end{figure}
\begin{remark}\label{rem:non-principal}
 For a centered end $f$ as in Theorem~\ref{thm:non-cyl},
 take a geodesic $\gamma$ with $\gamma(+\infty)=G(0)$
 which does not coincide with the principal axis.
 Then there exists a unique $b_{\gamma}\in\R_+$
	{\rm (}when $b_{\gamma}$ is used{\rm)}
	such that 
	\begin{equation}\label{eq:non-cyl-asymp-3}
	 R_{\gamma}(h,t)=
	  \begin{cases}
	   b_{\gamma}N_{p,m,m}(h,t) +o(h^{1+p})\qquad & 
	   \left(\text{if $-1<p<-\dfrac{1}{3}$}\right),\\[6pt]
	   b_{\gamma}N_{-\frac{1}{3},m,m}(h,t)
	   +S_{-\frac{1}{3}}(h)+o(h^{2/3}) &
	   \left(\text{if $p=-\dfrac{1}{3}$}\right),\\[6pt]
	   S_p(h) +o(h^{-2p}) &
	   \left(\text{if $-\dfrac{1}{3}<p<0$}\right).
	  \end{cases}
	\end{equation}
 This formula
 can be proved similarly to the proof of 
 Theorem~\ref{thm:non-cyl}. 
 Although a geometric interpretation of the bifurcation at $p=-1/3$
 will not be provided here,
 we remark that this bifurcation again appears, also at $-1/3$, even
 if we instead were to use the height functions induced by slicing with
 the family of horospheres which meet the end at infinity.
\end{remark}
\begin{remark}
 When $m=1$, that is, the case of an embedded end,
 only the case \ref{item:non-cyl-2} occurs.
\end{remark}
 
\begin{remark}
 If $f$ is rotationally symmetric, $n=\infty$ by definition
 and the end is of centered type.
 In this case, $n=\infty$ implies $n(1+p)>-2pm$ occurs, and then
 the third equation of \eqref{eq:non-cyl-asymptotic} holds
 for the principal axis
 and \eqref{eq:non-cyl-asymp-3} holds for non-principal axes.
\end{remark}

\begin{theorem}[Cylindrical case]\label{thm:cyl}
 Let $f\colon{}D^*\to H^3$ be a cylindrical regular WCF-end
 of a flat front
 with multiplicity $m$, and let $n$ be 
 its maximum indentation number.
 Take a geodesic $\gamma$ in $H^3$ with $\gamma(+\infty)=G(0)\in\partial H^3$,
 where $G$ is the hyperbolic Gauss map.
 We set
 \begin{equation}\label{eq:C}
 \begin{aligned}
    V_{m,l,c}(t) &=
     \left(
       \frac{4c^2+m^2}{4cm}
     \right)^{l/m}
     \left[
      2\left(c+\frac{m^2}{4c}\right)\cos lt - 
       \imag \frac{ml}{c}\sin lt
     \right],\\
    \Cyc_{m,l}(t) &= \frac{e^{\imag m t}}{m}
                        V_{m,l,\frac{m}{2}}(t)
             = \frac{1}{m}
                \bigl[(m+l)e^{\imag(m-l)t}+
                      (m-l)e^{\imag(m+l)t}\bigr].
 \end{aligned}
 \end{equation}
 We remark that $\Cyc_{m,l}(t)$ represents a cycloid 
 as in Theorem~\ref{thm:incomplete} in the introduction.
 Then
 \begingroup
 \renewcommand{\theenumi}{{\rm (\roman{enumi})}}
 \renewcommand{\labelenumi}{{\rm (\roman{enumi})}}
 \begin{enumerate}
  \item\label{item:cyl-1}
       If $f$ is a centerless end, that is, $n<m$, then 
       for any geodesic $\gamma$ with $\gamma(+\infty)=G(0)$,
       there exist  $\lambda\in\R_+$ and 
       a unique point $o\in \gamma$ 
       such that, 
       for $s$ large enough, 
       $\hat\pi\bigl(f(D^*)\cap\Horo_{\gamma}(o,s)\bigr)$ 
       is a curve in $\C=\Euc^2$
       parametrized by $t$ as
       \begin{multline}\label{eq:asymp-cyl}
	\frac{1}{m}e^{\imag m t}
	 \left[
	  \left(\lambda-\frac{m^2}{4\lambda}\right)
	  +
	  V_{m,n,\lambda}(t)h^{\beta}
	    +o(h^\beta)
	 \right],\\
       \text{where}\quad
         \beta = \frac{n}{m}, \quad \text{and}
         \quad
         h = e^{-s}.
       \end{multline}
       In particular, if $f$ is incomplete, then $\lambda=m/2$ and 
       $\hat\pi\bigl(f(D^*)\cap\Horo_{\gamma}(o,s)\bigr)$ is
       parametrized as
       \begin{equation}\label{eq:asymp-cyl-incomp}
          \Cyc_{m,n}(t) h^{\beta}+o(h^{\beta}),
          \qquad \text{where}\quad\beta=\frac{n}{m}.
       \end{equation}
  \item\label{item:cyl-2}
       If $f$ is a centered end
       which is not rotationally symmetric, that is, 
       $m<n<+\infty$, 
       \eqref{eq:asymp-cyl} 
       or \eqref{eq:asymp-cyl-incomp}
       holds when $\gamma$ coincides with  the principal axis $\sigma$. 
 \end{enumerate}
\endgroup
\end{theorem}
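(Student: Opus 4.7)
The plan is to reduce to a canonical form via Lemma~\ref{lem:normalized-omega}\ref{item:can-omega-cyl}, then compute $\pi \circ f$ from the representation formulas \eqref{eq:g-omega-repr} and \eqref{eq:proj-mat}, and finally read off the horospherical slice. If $f$ is rotationally symmetric with respect to $\gamma$ then by Example~\ref{ex:revolution} it is an $m$-fold cover of a flat cylinder and the $h^\beta$-correction in \eqref{eq:asymp-cyl} is absent, so we may assume $l_\gamma < \infty$. Applying Lemma~\ref{lem:normalized-omega}\ref{item:can-omega-cyl} to this $\gamma$ gives a local coordinate $z$ near $0$ and a diagonal $u \in \SL(2,\C)$ fixing the two endpoints of $\gamma$ on $\partial H^3$ such that, after replacing $f$ by $\iota_u \circ f$, $G(z) = z^m$ and $\omega = -\lambda z^{-1}(1 + z^l + o(z^l))^2\,dz$ with $\lambda \in \R_+$ and $l = l_\gamma$. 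By Lemma~\ref{lem:indentation}, $l = n$ in both case~\ref{item:cyl-1} (where $l_\gamma$ is independent of $\gamma$) and case~\ref{item:cyl-2} (where $\gamma = \sigma$ and $l_\sigma = n$). Placing $H^3$ in the upper half-space model with $\gamma$ the downward-oriented $h$-axis, $\Horo_\gamma(o,s) = \{h = e^{-s}\}$ and $\hat\pi(f(z)) = \zeta(z)/h(z)$ by \eqref{eq:horospheres-isometry}.

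Next, the representation formula \eqref{eq:g-omega-repr}, with $\phi(z) = 1 + z^l + o(z^l)$ and $a = \imag\sqrt{-\lambda/m}$, gives $E_{21} = C = a\,z^{-m/2}\,\phi(z)$ and $E_{22} = \frac{am}{2\lambda}\,z^{-m/2}\bigl[1 - \frac{m + 2l}{m} z^l + o(z^l)\bigr]$. Combined with $E_{11} = GC$ and $E_{12} = d(GC)/\omega$, the formula \eqref{eq:proj-mat} yields the convenient identity $\hat\pi(f) = E_{11}\overline{E_{21}} + E_{12}\overline{E_{22}} = G/h + C\,\overline{E_{22}}\,(dG/\omega)$ with $1/h = |E_{21}|^2 + |E_{22}|^2$. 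Setting $z = r e^{\imag t}$, direct expansion produces
\[
\frac{1}{h} = r^{-m}\Bigl[\frac{4\lambda^2 + m^2}{4\lambda m} + \frac{4\lambda^2 - m^2 - 2lm}{2\lambda m}\,\cos(lt)\,r^{l} + o(r^{l})\Bigr],
\]
which we invert implicitly to $r^l = h^{l/m}\bigl(\tfrac{4\lambda^2 + m^2}{4\lambda m}\bigr)^{l/m}(1 + o(1))$ for fixed $h$. Substituting back, $G/h$ contributes a constant term $e^{\imag m t}\,\tfrac{4\lambda^2 + m^2}{4\lambda m}$ plus an $h^{l/m}$-correction, while $C\,\overline{E_{22}}\,(dG/\omega)$ contributes a constant $-e^{\imag m t}\,\tfrac{m}{2\lambda}$ and another $h^{l/m}$-correction. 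The constants sum to $\tfrac{1}{m}e^{\imag m t}(\lambda - m^2/(4\lambda))$, matching the leading bracket in \eqref{eq:asymp-cyl}; the $h^{l/m}$ terms (after absorbing the prefactor from the $r \mapsto h$ conversion) combine to exactly $\tfrac{1}{m}e^{\imag m t}\,V_{m,n,\lambda}(t)\,h^{n/m}$. The uniqueness of $o \in \gamma$ is inherited from the uniqueness of the normalizing pair $(z,u)$ in Lemma~\ref{lem:normalized-omega}, since a translation along $\gamma$ corresponds to a diagonal $\SL(2,\C)$-conjugation that would alter $\lambda$ or break the normal form of $\omega$.

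In the incomplete case, Lemma~\ref{lem:normalized-omega} gives $\lambda = m/2$, so the constant term vanishes, and a short trigonometric identity shows $V_{m,n,m/2}(t)\,e^{\imag m t}/m = \Cyc_{m,n}(t)$; hence the slice is $\Cyc_{m,n}(t)\,h^{n/m} + o(h^{n/m})$, proving \eqref{eq:asymp-cyl-incomp}. The main obstacle is the careful bookkeeping of the several $z^l$ and $\bar z^l$ contributions appearing independently from $\phi^2$ in $\omega$, from $\phi^{-2}$ in $dG/\omega$, and from $d\phi/\phi$ in $dC/\omega$; these must recombine into the asymmetric trigonometric expression $2(\lambda + m^2/(4\lambda))\cos(lt) - \imag(ml/\lambda)\sin(lt)$ inside $V_{m,l,\lambda}$ only after delicate cancellations, and the prefactor $\bigl((4\lambda^2 + m^2)/(4\lambda m)\bigr)^{l/m}$ in the definition of $V_{m,l,\lambda}$ must emerge exactly from converting $r^l$ to $h^{l/m}$. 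Verifying that all these arithmetic coincidences yield the claimed closed-form coefficients is the core of the computational work.
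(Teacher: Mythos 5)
Your proposal is correct and follows essentially the same route as the paper: normalize $(G,\omega)$ via Lemma~\ref{lem:normalized-omega}\ref{item:can-omega-cyl} with $l=l_\gamma=n$, compute the entries $E_{ij}$ from \eqref{eq:g-omega-repr}, expand $1/h=|E_{21}|^2+|E_{22}|^2$ and $\zeta/h=E_{11}\overline{E_{21}}+E_{12}\overline{E_{22}}$ in $z=re^{\imag t}$, and convert $r^l$ to $h^{n/m}$ to produce the prefactor in $V_{m,n,\lambda}$. Your rewriting $\zeta/h=G/h+C\overline{E_{22}}\,dG/\omega$ is only a minor algebraic reorganization (I checked that the $e^{\pm\imag lt}$ coefficients recombine to $\frac{1}{m}e^{\imag mt}V_{m,n,\lambda}(t)$ exactly as claimed), and your treatment of the uniqueness of $o$ and of the incomplete case $\lambda=m/2$ matches the paper's.
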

\begin{remark}\label{rem:cyl-non-principal}
 For a centered end $f$ as in Theorem~\ref{thm:cyl},
 take a geodesic $\gamma$ with $\gamma(+\infty)=G(0)$
 which does not coincide with the principal axis.
 Then there exists a unique point $o\in\gamma$ such that
 the intersection $\hat \pi(f(D^*)\cap \Horo_{\gamma}(o,s))$
 is parametrized as
 \begin{equation}\label{eq:asymp-cyl-2}
  \frac{1}{m}e^{\imag m t}
   \left[
    \left(\lambda-\frac{m^2}{4\lambda}\right)
    +
    V_{m,m,\lambda}(t)h
    +o(h)
   \right]\qquad (h=e^{-s}).
 \end{equation}
 In particular, when $f$ is incomplete, we have
 $\lambda=m/2$. Then \eqref{eq:asymp-cyl-2} yields that
 $\hat\pi\bigl(f(D^*)\cap\Horo_{\gamma}(o,s)\bigr)$ is
 parametrized as $2 h + o (h)$.
\end{remark}
\begin{remark}\label{rem:basepoint}
 If one chooses another point $o'$ on the geodesic $\gamma$, 
 the asymptotic behavior changes as follows:
 Under the situations of Theorems~\ref{thm:non-cyl} or \ref{thm:cyl},
 take a point $o'$ on the geodesic $\gamma$ such that
 the signed distance between $o$ (which is uniquely determined)
 and $o'$ is $\tau$,
 that is, $\Horo_{\gamma}(o',s)=\Horo_{\gamma}(o,s+\tau)$.
 Then we have 
 \begin{itemize}
  \item When the end is non-cylindrical, 
	$\hat \pi(f(D^*)\cap \Horo_{\gamma}(o',s))$
	is parametrized as
	\[
	     e^{-\tau p} e^{\imag m t}h^p\bigl(1+o(1)\bigr)
	     \qquad 
	     (h=e^{-s}),
	\]
	where $o(1)$ denotes a higher order term in $h$.
  \item In the case of a complete cylindrical end,
	$\hat \pi(f(D^*)\cap \Horo_{\gamma}(o',s))$
	is pa\-ramet\-rized as
	\[
	    \frac{1}{m}e^{\imag m t}
	    \left(
	    \left(\lambda-\frac{m^2}{4\lambda}\right)
	    +
	    e^{-\tau\beta}V_{m,n,\lambda}(t)h^{\beta}
	    +o(h)
	    \right)\qquad (h=e^{-s}),
	\]
	under the assumption that \eqref{eq:asymp-cyl} holds.
  \item In the case of an incomplete cylindrical end,
	$\hat \pi(f(D^*)\cap \Horo_{\gamma}(o',s))$
	is pa\-ramet\-rized as
	\[
	      e^{-\tau\beta}\Cyc_{m,n}(t)h^{\beta}+o(h^\beta)\qquad 
	      (h=e^{-s})
	\]
	under the assumption that \eqref{eq:asymp-cyl-incomp} holds.
 \end{itemize}
\end{remark}
First, we prove Proposition~\ref{fact:complete-asymptotic}
and Theorem~\ref{thm:incomplete} in the introduction
as corollaries of Theorems~\ref{thm:non-cyl} and \ref{thm:cyl}.
After that their proofs are given.
\begin{proof}[Proof of Proposition~\ref{fact:complete-asymptotic}]
 Take a geodesic $\gamma$ with $\gamma(+\infty)=G(0)$.
 Then by taking the first terms of \eqref{eq:non-cyl-0} and
 \eqref{eq:asymp-cyl},
 $\hat \pi(f(D^*)\cap \Horo_{\gamma}(o,s))$ is parametrized as 
 \[
    \begin{cases}
      e^{\imag m t}h^p\bigl(1+o(1)\bigr)\qquad
        &\text{if the end is not cylindrical},\\[6pt]
      \dfrac{1}{m}\left(\lambda-\dfrac{m^2}{4\lambda}\right)
      e^{\imag m t}\bigl(1+o(1)\bigr)
        &\text{if the end is cylindrical}.
    \end{cases}
 \]    
 Hence by the correspondence \eqref{eq:horospheres-isometry},
 we have Proposition~\ref{fact:complete-asymptotic}.
\end{proof}
\begin{proof}[Proof of Theorem~\ref{thm:incomplete}]
 Take a geodesic $\gamma$ with $\gamma(+\infty)=G(0)$
 arbitrarily when the end is centerless.
 If the end is centered, we take $\gamma$
 to be the principal axis.
 Then by  \eqref{eq:asymp-cyl-incomp} and
 \eqref{eq:horospheres-isometry},
 we get \eqref{eq:asymp-incomplete} of
 Theorem~\ref{thm:incomplete}, where $n$ is the maximum indentation
 number.
 Finally, applying  Proposition~\ref{lem:cuspidal} we can
 conclude that
 $n$ is the ramification order of $\rho$,
 since the map $\Cyc_{m,n}(t)$ has $2n$ cusps in $[0,2\pi)$.
\end{proof}

\begin{corollary}\label{cor:maxind=ram}
Let $f\colon D^* \to H^3$ be an incomplete regular WCF-end. 
\begin{enumerate}
 \item \label{cor:maxind=ram-1}
The maximum indentation number of $f$ 
 coincides with the ramification order of $\rho$ at $z=0$.  
 \item $f$ is  epicycloid-type (or hypocycloid-type)
if and only if it is centerless (or centered). 
\end{enumerate}
\end{corollary}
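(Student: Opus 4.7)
The plan is to reduce both assertions to the geometric picture already established in Theorem~\ref{thm:cyl} combined with the singularity count of Proposition~\ref{lem:cuspidal}. Throughout, by Lemma~\ref{lem:incomplete} the incomplete regular WCF-end is cylindrical with $|\rho(0)|=1$, and Proposition~\ref{lem:alpha-mu} then forces $\alpha=-1$, so in particular $f$ is not horospherical.

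For part~(1), I denote by $n_\rho$ the ramification order of $\rho$ at $z=0$ and by $n_{\max}$ the maximum indentation number. On the one hand, Proposition~\ref{lem:cuspidal} shows that a punctured neighborhood of the end contains exactly $2n_\rho$ connected components of cuspidal edges, corresponding to the $2n_\rho$ rays $\{\Re(w^{n_\rho})=0\}$ in the normalized coordinate $w$ satisfying $\log\rho = w^{n_\rho}$. On the other hand, Theorem~\ref{thm:cyl}---applied along any geodesic $\gamma$ with $\gamma(+\infty)=G(0)$ in the centerless case, or along the principal axis in the centered case---shows that the horospherical slice $\hat\pi\bigl(f(D^*)\cap\Horo_\gamma(o,s)\bigr)$ is asymptotic to $\Cyc_{m,n_{\max}}(t)\,h^{n_{\max}/m}$, a closed curve having exactly $2n_{\max}$ cusps in a single period. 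Since for all sufficiently small $h$ each cusp on the slice comes from the transverse intersection of the slice with a single cuspidal edge component (and conversely), one concludes $2n_\rho = 2n_{\max}$, hence $n_\rho = n_{\max}$.

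Part~(2) is then immediate from Theorem~\ref{thm:incomplete} and Lemma~\ref{lem:indentation}. By Theorem~\ref{thm:incomplete} the pitch equals $p = n/m$ with $n$ the ramification order of $\rho$, which by part~(1) coincides with the maximum indentation number $n_{\max}$. Because $\alpha = -1$ rules out case~(i) of Lemma~\ref{lem:indentation}, only cases~(ii) and~(iii) remain: in the centerless case~(ii) every $l_\gamma<m$, so $n_{\max}<m$ and $p<1$, meaning $f$ is of epicycloid-type; in the centered case~(iii) one has $l_\sigma>m$ for the principal axis $\sigma$, so $n_{\max}>m$ and $p>1$, meaning $f$ is of hypocycloid-type. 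The converses follow at once from this dichotomy together with $p\ne 1$ already guaranteed by Theorem~\ref{thm:incomplete}.

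The main obstacle lies not in the logical skeleton but in rigorously justifying the correspondence ``cusps on a horospherical slice $\leftrightarrow$ cuspidal edges of $f$'' invoked in part~(1). Concretely, one has to verify that for all sufficiently small $h$ no two distinct cuspidal edges merge into a single cusp of the slice, and that the $o(h^{n_{\max}/m})$ error in Theorem~\ref{thm:cyl} does not create spurious additional cusps. The cleanest way to settle this will be to work in the coordinate $w$ from the proof of Proposition~\ref{lem:cuspidal}, in which the singular set is the union of $2n_\rho$ straight rays through the origin, and to check that the preimages in the $w$-plane of small horospheres meet each such ray transversally in a single point near $z=0$, yielding the required bijection and hence the equality $n_\rho = n_{\max}$.
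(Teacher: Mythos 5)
Your proof follows essentially the same route as the paper's: part (1) by matching the $2n$ cusps of the limiting cycloid from Theorems~\ref{thm:incomplete} and \ref{thm:cyl} with the $2n$ cuspidal-edge components of Proposition~\ref{lem:cuspidal}, and part (2) by combining this with the dichotomy $l_\gamma<m$ versus $l_\sigma>m$ from Lemma~\ref{lem:indentation} (the horospherical case being excluded since $\alpha=-1$). The slice-cusp versus cuspidal-edge correspondence that you flag as the delicate point is the same implicit step in the paper's argument; note that the paper's Remark~\ref{rmk:referee} sidesteps it entirely by computing $d\rho/\rho$ directly from the expansions \eqref{eq:normal-gauss-coord} and \eqref{eq:normal-gauss-coord2}, which shows at once that the ramification order of $\rho$ equals $l_\gamma=n$.
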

{\it Proof.} 
 \begin{enumerate}
  \item We have already seen this 
in the proof of Theorem~\ref{thm:incomplete}. 
 \item Recall that the multiplicity $m$ of the end and 
the ramification order $n$ of $\rho$ determine 
which type the end $f$ is,  
in such a way that it is epicycloid-type if $n <m$ and 
 is hypocycloid-type if $n > m$.  
Since $n$ equals the indentation number, 
the condition $n<m$ or $n>m$ implies that the end $f$ 
is centerless or centered, respectively.   
\hfill \qed
 \end{enumerate}

\begin{remark}\label{rmk:referee}
Here we provide another proof of 
Corollary \ref{cor:maxind=ram} \ref{cor:maxind=ram-1}:  
 Take a geodesic $\gamma$ with $\gamma(+\infty)=G(0)$
 arbitrarily when the end is centerless, and
 to be principal when the end is centered. 
Then the indentation number $l_{\gamma}$ equals 
the maximum indentation number $n$, which is not equal to 
the multiplicity $m$ of the end 
(see Lemma \ref{lem:indentation} (2), (3)); 
\begin{equation}\label{eq:l=nnot=m}
 l_{\gamma} = n \ne m. 
\end{equation}  
On the other hand, the formula \eqref{eq:formula-drhorho} 
in the introduction 
with expansions \eqref{eq:normal-gauss-coord},  
\eqref{eq:normal-gauss-coord2} gives 
\begin{equation}\label{eq:drhooverrho}
 \frac{d \rho}{\rho} = \left\{
\frac{\alpha_1}{m}(m+l)(m-l)z^{l-1}+o(z^{l-1})
\right\}dz, \quad (l = l_{\gamma}). 
\end{equation}
It follows from 
\eqref{eq:l=nnot=m}, \eqref{eq:drhooverrho} that 
the ramification order of $\rho$ equals 
$n$, the maximum indentation number. 
\end{remark}

\subsection*{Proofs of Theorems \ref{thm:non-cyl} and \ref{thm:cyl}}
\begin{proof}[Proof of Theorem~\ref{thm:non-cyl}]
 By Lemma~\ref{lem:normalized-omega},
 there exist a unique isometry $u\in\SL(2,\C)$
 and a complex coordinate $z$ 
 such that \eqref{eq:norm-omega-non-cyl} holds.
 Then $\iota_u\circ\gamma$ is the geodesic joining $\infty$ and $0$,
 that is, the $h$-axis of the upper half-space model $\R^3_+$ in 
 \eqref{eq:proj0},
 and we write $G$ instead of $u\star G$.

 Replacing $f$ by $\iota_u\circ f$, we assume $\gamma$ itself
 is the geodesic joining $\infty$ and $0$.

 We set $o=(0,1)$ in $\R^3_+$,
 which is a point on the geodesic $\gamma$.
 In this case, the family of horospheres
 is written as in \eqref{eq:horospheres-canonical}.
 So, by \eqref{eq:horospheres-isometry},
 if the image of $\pi\circ f$ is parametrized as
 $\bigl(\zeta(h,t),h\bigr)$, 
 the image under $\hat\pi$ of the intersection 
 $f(D^*)\cap \Horo_{\gamma}(o,s)$ is parametrized
 as  $t\mapsto\zeta(h,t) / h \in \C=\Euc^2$, where $h=e^{-s}$.
 Thus, to prove the theorem, it is sufficient 
 to compute $\zeta/h$ in terms of $h$.

 According to \eqref{eq:proj-mat},
 we have $\zeta/h=E_{11}\overline{E}_{21}+E_{12}\overline{E}_{22}$ and
 $1/h=E_{21}\overline{E}_{21}+E_{22}\overline{E}_{22}$.
 Hence we start with calculations of $E_{ij}$.
 Substituting \eqref{eq:norm-omega-non-cyl} into
 \eqref{eq:g-omega-repr},
 we obtain
 \begin{align*}
  E_{11}&=  -z^{(1+\mu+m)/2}\bigl(1+bz^l+o(z^l)\bigr),\\
  E_{12}&=  \frac{1}{2m}z^{-(1+\mu-m)/2}
              \left[
            (1+\mu+m)+b\bigl(2l-(1+\mu+m)\bigr)z^l+o(z^l)
              \right],\\
  E_{21}&=  -z^{(1+\mu-m)/2}\bigl(1+bz^l+o(z^l)\bigr),\\
  E_{22}&=  \frac{1}{2m}z^{-(1+\mu+m)/2}
              \left[(1+\mu-m)+b\bigl(2l-(1+\mu-m)\bigr)z^{l}+o(z^l)
              \right].
 \end{align*}
 Thus, 
 \begin{equation}\label{eq:ccdd}
 \begin{aligned}
  E_{21}\overline{E_{21}} &= 
    r^{\frac{-m}{1+p}}\bigl(1+2br^l \cos l t+o(r^l)\bigr),\\
  E_{22}\overline{E_{22}} &= \frac{1}{4}r^{\frac{-m}{1+p}}
              \left(
                \frac{1}{(1+p)^2}r^{\frac{-2mp}{1+p}}+
                   o(r^{\frac{-2mp}{1+p}})
              \right),\\
  E_{11}\overline {E_{21}}&=
           e^{\imag m t}r^{\frac{mp}{1+p}}
           \bigl(1 + 2 b r^l \cos l t + o (r^l)\bigr),\\
  E_{12}\overline{E_{22}} &=
           \frac{1}{4} e^{\imag m t}r^{\frac{mp}{1+p}}
           \left(\frac{-(2p+1)}{(1+p)^2} r^{\frac{-2mp}{1+p}}+
                   o(r^{\frac{-2mp}{1+p}}) \right),
 \end{aligned}
 \end{equation}
 where $z=re^{\imag t}$.
 Here, we used the relation 
 \[
     p = -\frac{1+\alpha}{2} = -\frac{1+\mu}{1+\mu-m}\in (-1,0),
 \]
 see Proposition~\ref{prop:alpha-mu-new}.
 Change the coordinate $r e^{\imag t}$ to $\eta e^{\imag t}$
 so that
 \[
     \eta=r^{\frac{m}{1+p}}.
 \]
 Then 
 \begin{equation}\label{eq:ccdd-tau}
 \begin{aligned}
  E_{21}\overline{E_{21}} 
     &= \eta^{-1}\bigl(1+(2b  \cos l t)\eta^\beta+o(\eta^\beta)\bigr),\\
  E_{22}\overline{E_{22}} &= \frac{1}{4}\eta^{-1}
              \left(
                \frac{1}{(1+p)^2}\eta^{-2p}+
                   o(\eta^{-2p})
              \right),\\
  E_{11}\overline{E_{21}}&=
           e^{\imag m t}\eta^p 
           \bigl(1 + (2 b  \cos l t)\eta^{\beta} + o (\eta^{\beta})\bigr),\\
  E_{12}\overline{E_{22}}&=
           \frac{1}{4}e^{\imag m t}\eta^p 
           \left(\frac{-(2p+1)}{(1+p)^2} \eta^{-2p}+
                   o(\eta^{-2p}) \right),
 \end{aligned}
 \end{equation}
 where $\beta=\beta_{p,l,m}$ as in \eqref{eq:non-cyl-data}.
\par\noindent
{\bf The case $(1+p)l<-2pm$:}
 In this case, $\beta<-2p$ holds.
 Then by \eqref{eq:ccdd-tau}, we have
 \begin{gather*}
    \frac{1}{h} = E_{21}\overline{E_{21}} + E_{22}\overline{E_{22}} =
                 \frac{1}{\eta}
                 \bigl(
                    1+ (2b \cos lt)\eta^{\beta}+o(\eta^{\beta})
                 \bigr),\\
    h    = \eta\bigl(1- (2b \cos l t)\eta^{\beta} +
                              o(\eta^{\beta})\bigr),\\
    \eta =
         h \left(1+(2 b \cos l t) h^{\beta}+o(h^{\beta})\right).
 \end{gather*}
 Thus, $\hat\pi\bigl(f(D^*)\cap \Horo_{\gamma}(o,s)\bigr)$
 is parametrized as
 \begin{align}\label{eq:non-cyl-case-1}
   \frac{\zeta}{h} & = E_{11}\overline{E_{21}} + E_{12}\overline{E_{22}}
     =  e^{\imag m t}\eta^p
        \bigl(1+(2b  \cos l t)\eta^{\beta}+o(\eta^{\beta})\bigr) \\
    &= e^{\imag m t}h^p\bigl(1+b N_{p,l,m}(h,t) +
        o(h^{\beta})\bigr)
       \qquad (h=e^{-s}).
       \nonumber
 \end{align}
\par\noindent
{\bf The case $(1+p)l>-2pm$:} 
 In this case, $\beta>-2p$ holds.
 Then by \eqref{eq:ccdd-tau}, we have
 \begin{gather*}
    \frac{1}{h} =  E_{21}\overline{E_{21}} + E_{22}\overline{E_{22}} =
                 \frac{1}{\eta}
                 \left(
                    1+ \frac{1}{4(1+p)^2}\eta^{-2p} + o(\eta^{-2p})
                 \right),\\
    h    = \eta\left(
                    1-
                    \frac{1}{4(1+p)^2}\eta^{-2p}+
                      o(\eta^{-2p})
                    \right),\\
    \eta =
                h\left(
                    1+ \frac{1}{4(1+p)^2}h^{-2p}+o(h^{-2p})
                    \right).
 \end{gather*}
 Thus, $\hat\pi\bigl(f(D^*)\cap \Horo_{\gamma}(o,s)\bigr)$
 is parametrized as
 \begin{align}\label{eq:non-cyl-case-2}
   \frac{\zeta}{h} & = E_{11}\overline{E_{21}} + E_{12}\overline{E_{22}}
     =  e^{\imag m t}\eta^p
        \left(1-\frac{2p+1}{4(1+p)^2}\eta^{-2p}+o(\eta^{-2p})\right) \\
    &= e^{\imag m t} h^p
         \left(1-\frac{1}{4(p+1)}h^{-2p}+o(h^{-2p})\right)\nonumber\\
    &= e^{\imag m t}h^p
         \left(1+S_p(h)+o(h^{-2p})\right).\nonumber
 \end{align}
\par\noindent
{\bf The case $(1+p)l=-2pm$:} 
 In this case, $\beta=-2p$ holds, and by a similar calculation,
 the intersection $\hat \pi(f(D^*)\cap \Horo_{\gamma}(o,s))$
 is parametrized as
 \begin{align}\label{eq:non-cyl-case-3}
   \frac{\zeta}{h} & = 
    e^{\imag m t} h^p
         \left(1+bN_{p,l,m}(h,t)+S_p(h)+o(h^{-2p})\right).
 \end{align}
\par\noindent
{\bf The case of centerless end:}
 In this case, $l$ in \eqref{eq:norm-omega-non-cyl} is always 
 equal to the maximum indentation number $n$ because 
 of \ref{item:indent:0}
and \ref{item:indent:1} in
 Lemma~\ref{lem:indentation}.
 Substituting $l=n$ into \eqref{eq:non-cyl-case-1}, 
 \eqref{eq:non-cyl-case-2} and \eqref{eq:non-cyl-case-3},
 we have \eqref{eq:non-cyl-asymptotic}.
\par\noindent
{\bf The case of centered end:}
 If the geodesic $\gamma$ coincides with the principal axis $\sigma$,
 $l$ in \eqref{eq:norm-omega-non-cyl} equals the 
 maximum indentation number $n$.
 Then \ref{item:non-cyl-2} holds.
We also explain Remark \ref{rem:non-principal} here.
If the geodesic $\gamma$ is not a principal axis, $l$ equals $m$. 
Substituting $l=m$ into \eqref{eq:non-cyl-case-1},
 \eqref{eq:non-cyl-case-2} and \eqref{eq:non-cyl-case-3},
we have \eqref{eq:non-cyl-asymp-3}. 
\end{proof}

\begin{proof}[Proof of Theorem~\ref{thm:cyl}]
 By Lemma~\ref{lem:normalized-omega},
 we normalize as in \eqref{eq:norm-omega-cyl}.
 Then by \eqref{eq:g-omega-repr}, we have
 \begin{align*}
  E_{11}&=  -\sqrt{\frac{\lambda}{m}}z^{\frac{m}{2}}
          \bigl(1+z^l+o(z^l)\bigr),\\
  E_{12}&=  \frac{1}{2\sqrt{\lambda m}}z^{\frac{m}{2}}
             \bigl(m+(2l-m)z^l+o(z^l)\bigr),\\
  E_{21}&=  -\sqrt{\frac{\lambda}{m}}z^{-\frac{m}{2}}
          \bigl(1+z^l+o(z^l)\bigr),\\
  E_{22}&=  \frac{1}{2\sqrt{\lambda m}}z^{-\frac{m}{2}}
             \bigl(-m+(2l+m)z^l+o(z^l)\bigr).
 \end{align*}
 Hence, we successively have
 \begin{align*}
   \frac{1}{h}&=
     E_{21}\overline{E_{21}} + E_{22}\overline{E_{22}} 
      = \frac{1}{m}\left(\lambda+\frac{m^2}{4\lambda}\right)
        r^{-m}
        \bigl(1+o(1)\bigr)\\
    h&=\left(\frac{4\lambda m}{4\lambda^2+m^2}\right)r^m \bigl(1+o(1)\bigr),\\
    \frac{\zeta}{h} &=
       \frac{e^{\imag m t}}{m}
       \left[
          \left(\lambda-\frac{m^2}{4\lambda}\right)+
           \left(
           2\left(\lambda+\frac{m^2}{4\lambda}\right)\cos l t -
             \imag\frac{ml}{\lambda}\sin l t
	   \right)r^l + o(r^l)
       \right],
 \end{align*}
 where $z=re^{\imag t}$.
 Setting $r^m=\eta$, we have
 \begin{align*}
   h &= \left(
          \frac{4\lambda m}{4\lambda^2+m^2}
        \right)\eta\bigl(1+o(1)\bigr),\\
   \frac{\zeta}{h} &=
       \frac{e^{\imag m t}}{m}\left[
          \left(\lambda-\frac{m^2}{4\lambda}\right)+\right.\\
        &\hspace{4em}
         \left.
          \left(
              \frac{4\lambda m}{4\lambda^2+m^2}
          \right)^{-\beta}
          \left(
           2\left(\lambda+\frac{m^2}{4\lambda}\right)\cos l t -
             \imag\frac{ml}{\lambda}\sin l t
	   \right)
           h^{\beta} + o(h^{\beta})
       \right],
 \end{align*}
 where $\beta=l/m$.

 If the end is centered and $\gamma$ is not principal,
 then $l=m$ and we have \eqref {eq:asymp-cyl-2}.
 In all other cases, $l$ equals the maximum indentation number $n$, 
 and \eqref{eq:asymp-cyl} holds.

 When the end is incomplete, $\lambda=m/2$ holds because of 
 Lemma~\ref{lem:normalized-omega}.
 Then the first term of \eqref{eq:asymp-cyl} 
 vanishes, 
 and we have \eqref{eq:asymp-cyl-incomp}.
\end{proof}

\begin{remark}[Behavior of the singular curvature]
 In \cite{SUY}, 
 the notion of singular curvature 
 for cuspidal edges of fronts was introduced.  
 It was seen there that the singular curvature of a cuspidal edge is
 negative, (resp. positive) if and only if the cuspidal edge curves 
 outward, (resp. curves inward) with respect to the 
 location of the surface. 
 (For representative figures, see \cite{SUY}.)
 Let $f \colon D^*\to H^3$ be a regular incomplete WCF-end.
 Since $f$ is flat in $H^3$, the extrinsic curvature $\Kext$ is 
 identically $1$. 
 Then the singular curvature $\kappa_s$ 
 of cuspidal edges on $f$ is negative, 
 by \cite[Theorem 3.1]{SUY}.
 Let $(\omega,\theta)$ be the canonical forms associated with $f$,
 and set $\rho=\theta/\omega$.
 Here, we take a complex coordinate $z=x+\imag y$ so that
 the image of the $x$-axis is a cuspidal edge and $z=0$ 
 corresponds to the incomplete end.
 Then  we can write
 \[
       \rho(z)=1+\imag \rho_1 z^n + o(z^n) \qquad (
             \rho_1\in\R\setminus\{0\}).
 \]
 A somewhat lengthy but straightforward calculation 
 gives the following explicit formula for the
 singular curvature on the $x$-axis:
 \begin{equation*}
  \kappa_s = -\frac{n|\rho_1|}{2m^2}x^n + o(x^n),
 \end{equation*}
 where $m$ is the multiplicity of the end.
 This implies that the singular curvature is 
 always negative, and hence that the cuspidal edges always curve 
 outward with respect to the surface.  Moreover,
 the limiting value of the singular curvature is zero at the end, 
 in accordance with the fact that the cuspidal edges become 
 asymptotically straight as they extend out to the end.  
\end{remark}

\section{The pitch of caustics}
\label{sec:example}
As pointed out in the introduction,
the caustic $C_f$ of a flat front gives locally a flat front.
In this section, we give a useful formula for the 
pitch of an end of $C_f$.
We suppose that the flat front 
$f\colon M^2\to H^3$ 
is weakly complete and of finite type.
Then there exist a compact Riemann surface 
$\overline{M}^2$ and finitely many points $p_1,\dots,p_n \in 
\overline{M}^2$ 
so that $M^2 = \overline{M}^2 \setminus \{ p_1,\dots,p_n \}$.  
Moreover, we assume the 
 restriction of $f$ to a neighborhood of any $p_j$
corresponds to a regular WCF-end.
So we call each $p_j$ a regular WCF-end of $f$.
On the other hand, let
\[
     q_1,\dots,q_m\in M^2
\]
be all of the umbilics of $f$.
Then it is known that the caustic 
\[
   C_f: \overline{M}^2 \setminus \{p_1,\dots,p_n,q_1,\dots,q_m\}
   \longrightarrow H^3
\]
is a weakly complete flat p-front of finite type, and
$\{p_1,\dots,p_n,q_1,\dots,q_m\}$ are all regular ends.
In particular, the number of ends of $C_f$ is $m+n$.
The ends $p_1,\dots,p_n$ of $C_f$ come from the ends
of $f$, so they are called {\it E-ends\/}.
The ends $q_1,\dots,q_m$ of $C_f$ come from the umbilics
of $f$, so they are called {\it U-ends\/}. (See \cite{KRUY}.)
Even when $C_f$ is not a front but only a p-front, 
by taking its double cover, we may consider it as a front.

From now on, we fix a point
\[
   e=p_j \quad \mbox{or} \quad 
   q_k \qquad (j=1,\dots, n,\,\, k=1,\dots,m)
\]
and denote by
\[
    C:D^*=D\setminus \{e\} \longrightarrow H^3
\]
a restriction of $C_f$ around a neighborhood $D(\subset \overline{M}^2)$ 
of the end $e$ of $C_f$, which is a regular WCF-end of a p-front.
If the unit normal vector field of $C$ is globally
defined on $D^*$ (namely $C$ is a front), 
the end $C$ is called {\it co-orientable}.
(In this case, $C$ itself is a regular WCF-end of a front.)
Otherwise, $C$ is called {\it non-co-orientable}.
If $C$ is non-co-orientable, it is not a front, but
taking the double cover $\pi:\hat D^*\to D^*$, then
$C\circ \pi$ is a regular WCF-end of a front.
\begin{proposition}[{\cite[Theorems 7.4 and 7.6]{KRUY}}]
 All of the U-ends $q_1,\dots,q_m$ 
 are incomplete regular WCF-ends of the $p$-front $C_f$.
 Each E-end $p_j$ is an incomplete regular WCF-end
 of $C_f$, unless $p_j$ is a snowman-type end of $f$.
\end{proposition}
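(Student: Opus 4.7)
The plan is to use Roitman's holomorphic representation of the caustic $C_f$ to obtain explicit expansions of the hyperbolic Gauss maps $(G^c,G_*^c)$ and canonical forms $(\omega^c,\theta^c)$ of $C_f$ in terms of those of $f$ at each end of $C_f$, and then to apply the regularity criterion of Fact~\ref{fact:regular}, the Hopf-differential condition of Lemma~\ref{lem:Q-regular}, and the incompleteness characterization of Lemma~\ref{lem:incomplete}.

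First I would handle the U-ends. Let $q=q_k$ be an umbilic of $f$. Since $Q=\omega\theta$ vanishes at $q$ while $|\omega|^2+|\theta|^2$ is positive definite there, exactly one of $\omega,\theta$ vanishes at $q$; after possibly applying the duality \eqref{eq:dual} we may assume $\omega(q)=0\neq\theta(q)$. Then $\rho=\theta/\omega$ has a pole at $q$ and the defining function $t(z)=\tfrac{1}{2}\log|\rho(z)|$ of the caustic blows up as $z\to q$, so $C_f$ tends to $\partial H^3$ there. Substituting the Taylor expansions of $\omega,\theta$ at $q$ into Roitman's formula, one verifies that $G^c$ and $G_*^c$ both extend meromorphically across $q$ with a common value (so the end is regular by Fact~\ref{fact:regular}), that $|\omega^c|^2+|\theta^c|^2$ remains complete with finite total curvature near $q$ (weak completeness and finite type), and that $\rho^c=\theta^c/\omega^c$ is holomorphic and nonvanishing there with $|\rho^c(q)|=1$. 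Lemma~\ref{lem:incomplete} then forces the caustic end at $q$ to be an incomplete regular WCF-end.

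For an E-end, let $p=p_j$. I would insert the normalized expansion \eqref{eq:norm-omega-non-cyl} or \eqref{eq:norm-omega-cyl} of $\omega$, together with \eqref{eq:gauss-normal} for $(G,G_*)$, into Roitman's formula and expand. The regularity and weak-completeness checks go through just as in the U-end case, so the real question is the value of $|\rho^c(p)|$. A leading-order calculation shows that $|\rho^c(p)|=1$ precisely when the top-term coefficient $q_{-2}$ in \eqref{eq:Q-top} is nonnegative, which by Proposition~\ref{lem:alpha-mu} is equivalent to the end $p$ being horospherical, of hourglass-type, or cylindrical. In the remaining case $q_{-2}<0$ (the snowman case) the same calculation gives $|\rho^c(p)|\neq 1$, and the caustic end at $p$ is then complete rather than incomplete. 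Lemma~\ref{lem:incomplete} thus yields the stated incompleteness for all non-snowman E-ends.

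The main obstacle is extracting from Roitman's representation enough of the leading behavior of $(G^c,G_*^c,\omega^c,\theta^c)$ to simultaneously read off meromorphicity (and hence regularity), the orders of the canonical forms (and hence weak completeness and finite type), and the modulus $|\rho^c|$ at the end. At a U-end the cancellation arising from $\omega(q)=0$ must be tracked carefully to rule out essential singularities for $G^c$ and $G_*^c$; at an E-end the snowman/non-snowman dichotomy only emerges after a careful expansion involving both the multiplicity $m$ and the ratio $\alpha$ of the Gauss maps. Once these expansions are in hand, the final verifications are essentially formal applications of the criteria in Section~\ref{sec:prelim}.
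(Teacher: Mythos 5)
Your sketch is essentially correct, but note that the paper itself offers no proof of this proposition: it is quoted verbatim from \cite[Theorems 7.4 and 7.6]{KRUY}, so there is no in-paper argument to compare against line by line. What can be compared is the machinery: the computation you propose is exactly the one the paper deploys later, in the proof of Theorem~\ref{thm:Sec5}, via the caustic's canonical forms $\omega_c=\imag\sqrt{Q}+\tfrac14 d(\log\rho)$, $\theta_c=-\imag\sqrt{Q}+\tfrac14 d(\log\rho)$ from \cite[(6.5)]{KRUY}, and your conclusions check out against it. At a U-end $q$, exactly one of $\omega,\theta$ vanishes (since $Q$ vanishes but $ds^2_{1,1}$ is positive definite), $d(\log\rho)$ acquires a simple pole while $\sqrt{Q}$ has positive order, so $\omega_c,\theta_c$ both have simple poles (cylindrical, weakly complete, finite type), $Q_c=Q+(d(\log\rho)/4)^2$ has a pole of order exactly $2$ (regular by Lemma~\ref{lem:Q-regular}), and $\rho_c\to 1$, forcing incompleteness by Lemma~\ref{lem:incomplete}. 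At an E-end, writing $\rho_c=\bigl(1-4\imag\sqrt{Q}/d(\log\rho)\bigr)/\bigl(1+4\imag\sqrt{Q}/d(\log\rho)\bigr)$, the limit of $4\imag\sqrt{Q}/d(\log\rho)$ is $4\imag\sqrt{q_{-2}}/(\mu_*-\mu)$, which is purely imaginary (hence $|\rho_c(0)|=1$, incomplete) exactly when $q_{-2}\ge 0$, and real and nonzero (hence $|\rho_c(0)|\ne1$, complete) exactly when $q_{-2}<0$, i.e.\ the snowman case --- precisely your dichotomy via Proposition~\ref{lem:alpha-mu}. Two small points deserve more care than the sketch gives them: (i) when $\ord Q$ is odd the square root $\sqrt{Q}$, hence $\omega_c$ and $\theta_c$, lives only on the double cover (the end of $C_f$ is then non-co-orientable), though the condition $|\rho_c|=1$ is insensitive to the sign of $\sqrt{Q}$ since that sign change replaces $\rho_c$ by $1/\rho_c$; and (ii) "Roitman's representation" as you invoke it should really be the canonical-form formula above, since the hyperbolic Gauss maps of $C_f$ are not those of $f$ and regularity is most cleanly read off from $\ord_0 Q_c\ge -2$ rather than from expansions of $G^c,G_*^c$.
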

If $p_j$ is a snowman-type end of $f$,
then $C_f$ has a complete cylindrical end at $p_j$,
and the pitch is equal to $0$.
So to give a formula for the pitch of the ends of $C_f$,
we may assume that $p_j$ is not a snowman-type end of $f$.
We can prove the following: 
\begin{theorem}\label{thm:Sec5}
 Let $C:D^*\to H^3$ be a regular WCF p-front end, which is
 the restriction of the caustic $C_f$
 around an E-end $z=p_j$ or a U-end $z=q_k$.
 The end is incomplete if and only if 
 $z = q_k$, or $z=p_j$ and $p_j$ is not a 
 snowman-type end of $f$.
 In this case,
 for a sufficiently small $\varepsilon>0$, 
 the image of $C$ in $\R^3_+$
 is congruent to a portion of the image of 
 $[0,2\pi) \times (0,h_0) \ni (t,h)\mapsto (\varphi_h(t),h)\in \R_+^3$,
 with
 \begin{equation}\label{eq:asymp-incomplete-2}
    \varphi_h(t)=
             h^{1+p}\Cyc_{m_c,n_c}(t)+o(h^{1+p}),
	     \quad p:=\frac{n_c}{m_c}\in (0,1)\cup (1,\infty) \;,
 \end{equation}
 for the map $\Cyc_{m_c,n_c}$ of a cycloid,
and for 
 \begin{align*}
       n_c&:=\frac1{2}\ord Q+\ord\bigl(S(G_*)-S(G)\bigl)+3, \\
       m_c&:=
       \begin{cases}
	  \dfrac12\ord Q+m_j +1 & (\mbox{if $z=p_j$}) \\[6pt]
	  \dfrac12\ord Q & (\mbox{if $z=q_k$}), 
       \end{cases}
 \end{align*}
where 
$m_j$ is the multiplicity of the end $p_j$ of
 $f$.
 In particular, the pitch $p=n_c/m_c$
 of $C$ is a rational number.
 {\rm (}In fact, if $p_j$ is a snowman-type end of $f$, it is
 a complete cylindrical end of $C_f$, and
 its pitch vanishes.{\rm )}
\end{theorem}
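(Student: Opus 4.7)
The strategy is to apply Theorem~\ref{thm:incomplete} directly to the caustic end $C$. By that theorem, once we verify that $C$ is an incomplete regular WCF-end of a flat front whose image is not contained in a geodesic, its pitch equals $p = n_c/m_c$, where $m_c$ is the multiplicity (ramification order of the hyperbolic Gauss map of $C$ at the end) and $n_c$ is the ramification order of $\rho_c := \theta_c/\omega_c$ at the end. If $C$ happens to be non-co-orientable, we pass to the double cover, under which both ramification orders are multiplied by $2$, leaving the ratio $n_c/m_c$ unchanged. Incompleteness of $C$ at a U-end or non-snowman E-end is already established by \cite[Theorems 7.4 and 7.6]{KRUY}, so it remains only to compute $m_c$ and $n_c$.

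The next step is to invoke the explicit representation of the caustic of a flat front due to Roitman \cite{R}, in the form used in \cite{KRSUY} and \cite{KRUY}: this expresses the hyperbolic Gauss maps $G_c, (G_c)_*$, the canonical forms $\omega_c, \theta_c$, and hence the function $\rho_c$, of $C_f$ entirely in terms of $(G, G_*, \omega, \theta)$ and their derivatives. The geometry dictates that the two hyperbolic Gauss maps of $C_f$ are built from $G$, $G_*$ via the normal geodesics that produce the focal locus, while the distance function $t(z) = \tfrac12 \log|\rho(z)|$ from $f$ to $C_f$ contributes $d\log\rho$ to $\omega_c$ and $\theta_c$. Substituting the formula \eqref{eq:formula-drhorho} for $d\rho/\rho$ in terms of $G, G_*$, and using $Q = -dG\,dG_*/(G - G_*)^2$ from \eqref{eq:can-hopf}, one obtains explicit local expansions of $\omega_c, \theta_c, G_c$ at $z = p_j$ and at $z = q_k$.

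From those expansions the two order invariants are read off. At an E-end $z = p_j$, the Gauss map $G$ has local ramification order $m_j$ (so $dG$ vanishes to order $m_j - 1$) and $Q$ has order $\ord_{p_j}Q \ge -2$ (Lemma~\ref{lem:Q-regular}); tracking powers of $z$ in $dG_c$ yields $m_c = \tfrac12 \ord Q + m_j + 1$. At a U-end $z = q_k$, both $G$ and $G_*$ are unramified but $Q$ has a zero of positive order, and the same computation gives $m_c = \tfrac12 \ord Q$. In either case $n_c$ is computed from the logarithmic derivative $d\rho_c/\rho_c$ via \eqref{eq:formula-drhorho} applied to $(G_c, (G_c)_*)$: the combinations of second derivatives there produce, after simplification using the Schwarzian identity \eqref{eq:schwarz} (which shows $S(g) - S(g_*) = S(G) - S(G_*)$, where $dg = \omega$ and $dg_* = \theta$), the order $n_c = \tfrac12\ord Q + \ord(S(G_*) - S(G)) + 3$. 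Combining the two gives the claimed formulas, and rationality of $p = n_c/m_c \in \Q_+\setminus\{1\}$ follows because $m_c, n_c \in \Z_+$ and $n_c \neq m_c$ by incompleteness.

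The main obstacle is the accurate identification of the caustic data $(G_c, \omega_c, (G_c)_*, \theta_c)$ from Roitman's formula: one must handle the $\mathrm{U}(1)$-ambiguity \eqref{eq:u-one-amb} and dual-pair ambiguity \eqref{eq:dual} — neither of which alters the pitch — and one must verify that the algebraic combination of $G, G_*$ and their derivatives arising naturally in $d\rho_c/\rho_c$ contributes, at leading order, exactly the Schwarzian difference $S(G_*) - S(G)$. This last reduction is the key algebraic lemma needed to pass from an unwieldy expression in $G, G_*, Q$ to the clean Schwarzian combination in the statement of the theorem; once it is in hand, all remaining steps are routine order computations.
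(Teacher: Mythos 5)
Your proposal is correct and follows essentially the same route as the paper: reduce to Theorem~\ref{thm:incomplete} (passing to the double cover when $\ord Q$ is odd and $C$ is non-co-orientable), quote \cite[Theorems 7.4 and 7.6]{KRUY} for $m_c$, and compute $n_c$ from $d(\log\rho_c)$ using the caustic data $\omega_c=\imag\sqrt{Q}+\tfrac14 d(\log\rho)$, $\theta_c=-\imag\sqrt{Q}+\tfrac14 d(\log\rho)$, where the ``key algebraic lemma'' you isolate is exactly the paper's identity $d(\log\rho_c)=\tfrac{\imag\sqrt{Q}}{2Q_c}\bigl(S(G_*)-S(G)\bigr)$, obtained via \eqref{eq:schwarz} together with $\ord Q_c=-2$ for the (cylindrical) incomplete caustic end. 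The only cosmetic difference is that the paper works directly with the canonical forms $(\omega_c,\theta_c)$ rather than with the caustic's Gauss maps $(G_c,(G_c)_*)$, which sidesteps ever computing the latter.
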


\begin{proof}
 The multiplicity $m_c$ of the end $p_j$ or $q_k$ of
 $C_f$ has been
 computed in \cite[Theorems 7.4 and 7.6]{KRUY}. 
So we consider only the  formula for $n_c$.
 The canonical forms $(\omega_c,\theta_c)$ of $C_f$
 are given by (see \cite[(6.5)]{KRUY})
 \[
      \omega_c=\imag\sqrt{Q}+\frac{d(\log \rho)}4,
          \quad
      \theta_c=-\imag\sqrt{Q}+\frac{d(\log \rho)}4,
 \]
 where $\rho=\theta/\omega$, $Q=\omega\theta$ and
 $(\omega,\theta)=(\hat \omega dz,\hat \theta dz)$ 
 is a pair of canonical forms
 of $f$. 
 (The meaning of the square root $\sqrt{Q}$ is
 explained in \cite{KRUY}.) 
 Then the Hopf differential $Q_c$ of $C_f$ is given by
 \[
      Q_c:=\omega_c\theta_c=Q+\left(\frac{d(\log \rho)}{4}\right)^2.
 \]
 We set $\rho_c:=\theta_c/\omega_c$.
 By a straightforward calculation, we have
 \begin{align*}
  d(\log \rho_c)&=\frac1{Q_c}(\hat\omega_c\hat\theta'_c
  -\hat\omega'_c\hat\theta_c)\, dz^3 \\
  &=\frac{\imag \sqrt{Q}}{2Q_c}
  \left\{ 
  \Bigl(\frac{\hat\theta'}{\hat\theta}\Bigr)'
      -\frac12\Bigl(\frac{\hat \theta'}{\hat \theta}\Bigr)^2
      -\Bigl(\frac{\hat\omega'}{\hat\omega}\Bigr)'
      +\frac12\Bigl(\frac{\hat\omega'}{\hat\omega}\Bigr)^2
  \right\} dz^2 \\
  &=\frac{\imag \sqrt{Q}}{2Q_c}
  \left\{
  \bigl(2Q+S(G_*)\bigr)-\bigl(2Q+S(G)\bigr)
  \right\}
  =\frac{\imag \sqrt{Q}}{2Q_c}\bigl(S(G_*)-S(G)\bigr) \;,
 \end{align*}
 where $'=d/dz$ and $S(\cdot)$ denotes the Schwarzian derivative as in
 \eqref{eq:schwarz}.
 Since $z=p_j$ or $z=q_k$ is incomplete,
 it must be cylindrical and the order of $Q_c$ at the end equals
 $-2$ (i.e., a pole of order $2$).
 The singular set of $C_f$ is represented as $\{|\rho_c|=1\}$. 
 Then
 \begin{align*}
    n_c&=\ord \bigl(d(\log \rho_c)\bigr)+1\\
    &=
    \frac1{2}\ord(Q)-\ord(Q_c)+\ord\bigl(S(G_*)-S(G)\bigr)+1\\
    &=\frac1{2}\ord(Q)+\ord\bigl(S(G_*)-S(G)\bigr)+3.
 \end{align*}
 If $\ord Q$ at the end is even, $C$ is a front, and
 the assertion  follows directly from Theorem~\ref{thm:incomplete}.
 On the other hand, if $\ord Q$ is odd, then $C$ is non-co-orientable.
 In this case, we get 
 the assertion by applying Theorem~\ref{thm:incomplete}
to the double cover of $C$.
\end{proof}

We shall now compute the pitches of ends of some complete flat 
fronts and their caustics, showing that a variety of cases do 
indeed occur on global examples.  

\begin{example}[Flat fronts of revolution]
\label{exa:revolution}
 Recall the flat fronts of revolution as in Example~\ref{ex:revolution}.
 The caustic of the horosphere is the empty set because 
 the horosphere is totally umbilic,
 and the caustic of a hyperbolic cylinder is a geodesic line.
 The caustic of an hourglass is a geodesic, which
 can be considered as a parallel surface of 
 the hyperbolic cylinder, regardable
 as a regular incomplete cylindrical end with pitch $p=\infty$.
 The caustic of the snowman is congruent to
 a hyperbolic cylinder. 
\end{example}

\begin{example}
\label{exa:n-noid}
 The third and the fourth authors \cite{UY2}
 constructed constant mean curvature one surfaces in
 $H^3$ from a given hyperbolic Gauss map and
 a polyhedron of constant Gaussian curvature $1$.
 Here, we explain the canonical symmetric flat $k$-noid
 via a construction similar to that one.
 We consider a domain $D_k$ in $\C$ bounded by
 a regular $k$-polygon $P_k$.
 Consider a pair of $\overline{D}_k$ and 
 glue them along $P_k$. 
 Then we get an 
 abstract flat surface with $k$ conical singularities,
 up to a homothety, which gives a flat
 symmetric conformal metric $|\omega|^2$ on 
 $S^2=\C\cup\{\infty\}$, that is, 
 \[
   \omega = c(z^k-1) ^{-2/k}\,dz \qquad (k\geq 3).
 \]
 We set $G=z$. By \cite[Theorem 4.1]{KRUY},
 the pair $(G,\omega)$ gives a flat symmetric $k$-noid
 \[
    f \colon \C\cup\{\infty\}\setminus\{z^k=1\}\longrightarrow
             H^3,
 \]
 whose Hopf differential $Q$ and other 
 hyperbolic Gauss map $G_*$ are given by
 \[
   Q=\frac{(k-1)z^{k-2}}{(z^k-1)^2}dz^2,\qquad G_*=z^{1-k}.
 \]
 In particular, $z=0,\infty$ are umbilics.
 Since $\lim_{z\to 1}(z-1)^2 Q /dz^2=(k-1)/k^2>0$ 
 and the ends of $f$ are congruent to each other, 
 the ends of $f$ are
 all of hourglass type, and they are embedded, as $G$ does
 not branch at the ends. Moreover, their pitch is given by
 \[
    p=-\frac{k-2}{2k-2}<0.
 \]
 Next, we consider the caustic $C_f$.
 The points $z=0,\infty$ are ends coming from the umbilics 
 of $f$.
 Since $Q$ has order $k-2$ at those two points, 
 we have $m_c=(k-2)/2$.  Since $G_*(z)=z^{1-k}$, 
 $S(G_*)$ has order $-2$ at $z=0,\infty$.
 Thus
 \[
    n_c=\frac12 \ord{Q}+\ord{S(G_*)}+3=\frac{k}2 \; , 
 \]
 and $n_c/m_c=k/(k-2)>1$ gives the pitch of
 the end $z=0,\infty$. They are hypocycloid-type ends.

 On the other hand, the other remaining ends of 
 $C_f$ are all congruent.
 By a similar computation, we have $m_c=1$ and 
 $n_c=2$, and thus the pitch is equal to $2$, 
 namely, these ends are of hypocycloid-type.  
 In particular, the case of $k=4$ 
 (Figure~\ref{fig:caustic} in the introduction)
 is very interesting.
 As pointed out in the introduction, the caustic $C_f$
 has octahedral symmetry. 
 In this case, one can easily
 get $n_c=2$ from the picture since the four cuspidal edges
 accumulate at each end.
\end{example}
\begin{figure}
\begin{center}
  \includegraphics[width=0.45\textwidth]{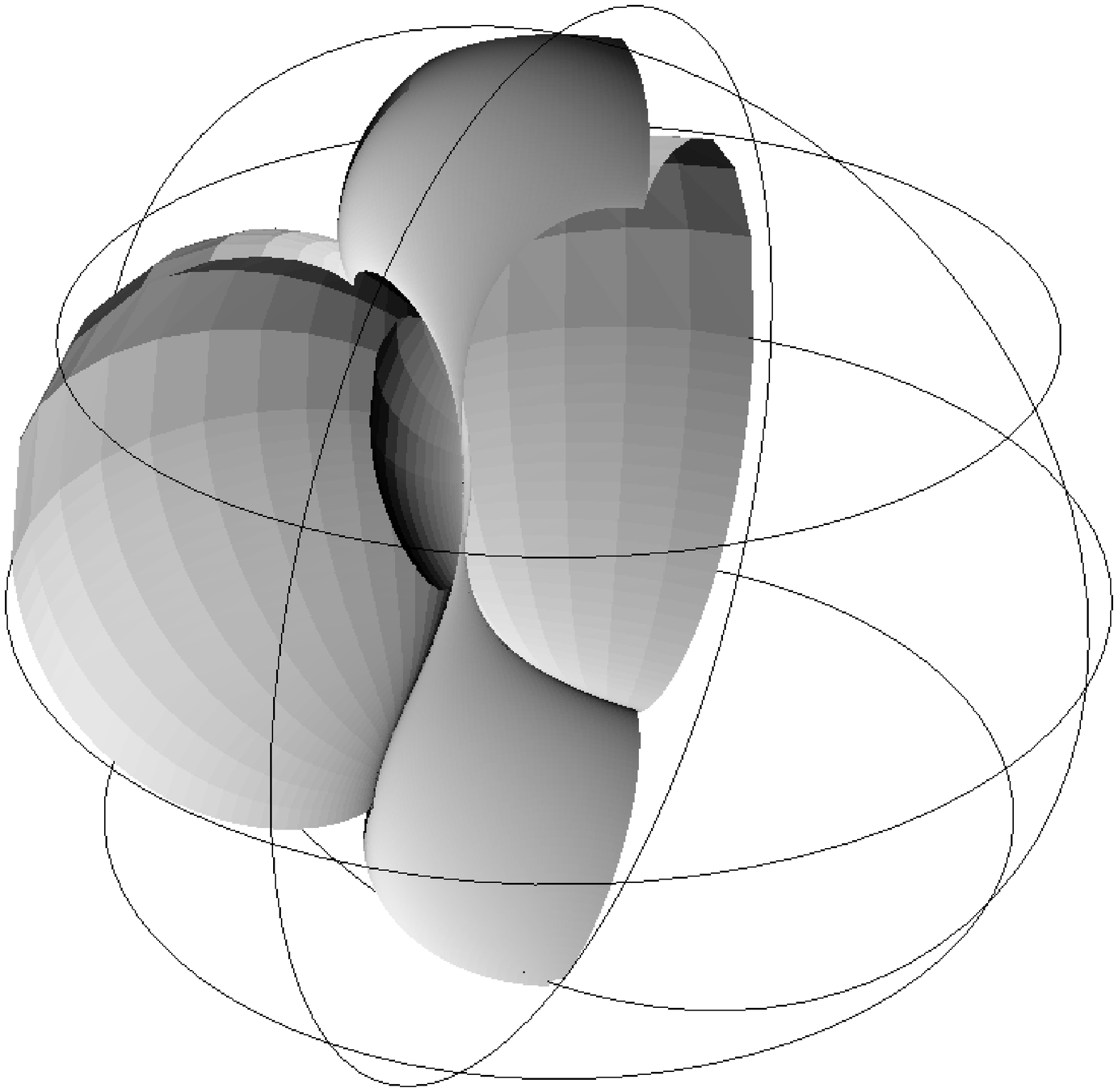} 
  \includegraphics[width=0.45\textwidth]{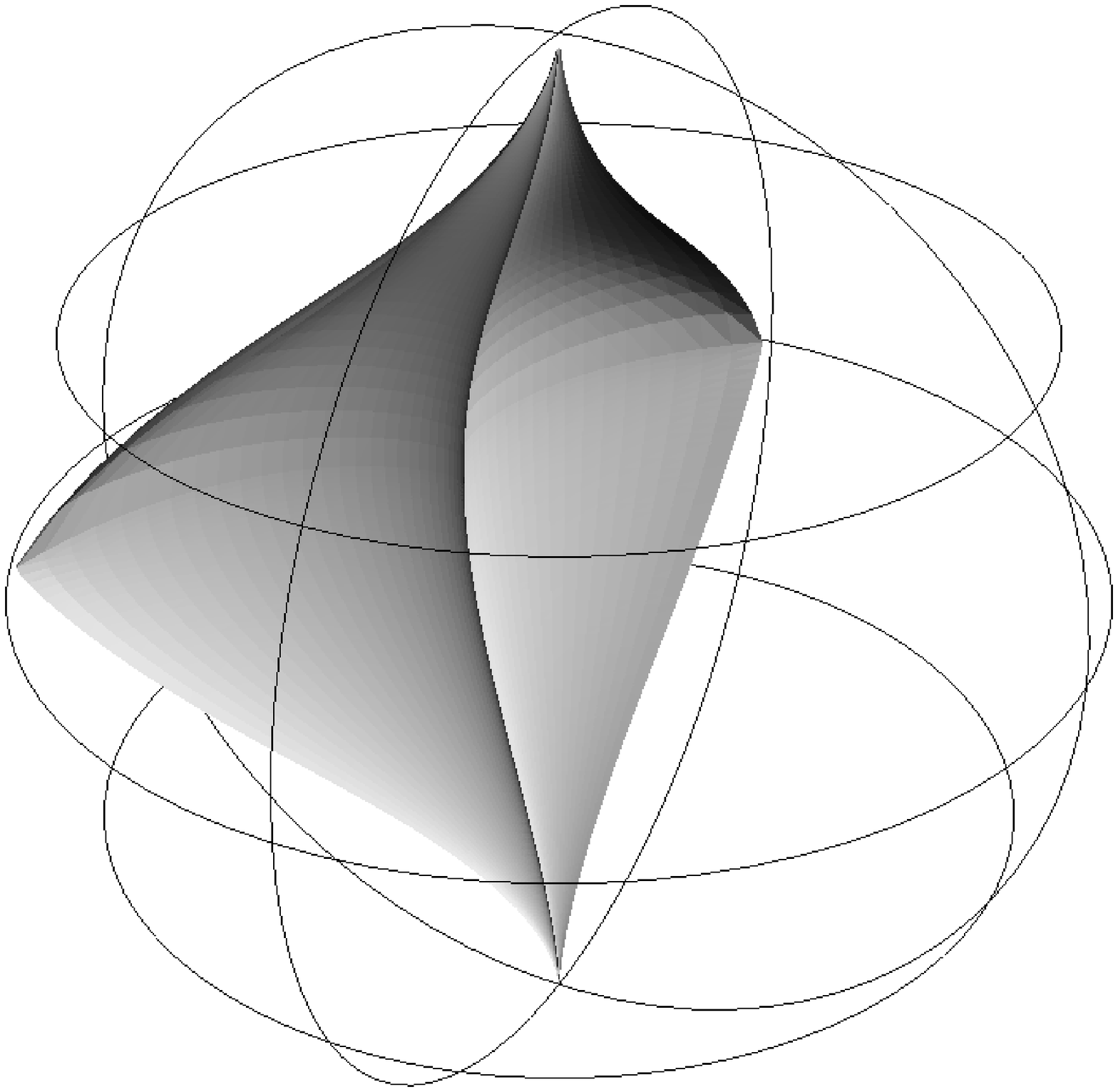} 
\end{center}
\caption{%
 One of three congruent portions of the front with data 
 $G=z^k$ and $G_*=z^{k+d}$ and one of three 
 congruent portions of its caustic, for
 $(k,d)=(1,3)$.
}\label{fig:nis1andmis4}
\end{figure}
\begin{example}
\label{exa:n+2-noid}
 We consider a cone $C_d (\subset \R^3)$ 
 over the domain $D_d$ in $\C$ bounded by
 a regular $d$-polygon $P_d$, which gives a polyhedron whose
 sides consist of $d$ regular triangles.
 Consider a pair of $\overline C_d$ and 
 glue them along $\overline D_d$. Then we get an
 abstract flat surface with $(d+2)$-conical singularities,
 which gives a flat
 symmetric conformal metric $|\omega|^2$ on 
 $S^2=\C\cup\{\infty\}$.
 We set $G=z^k$. Then the pair $(G,\omega)$ 
 gives a flat front with $d+2$ ends and dihedral symmetry 
 \[
    f \colon \C\setminus \bigl(\{0\}\cup\{ z^d =1\}\bigr)
       \longrightarrow H^3,
 \]
 whose Hopf differential $Q$ and other 
 hyperbolic Gauss map $G_*$ are given by
 \[
      Q=-\frac{k(k+d)z^{d-2}}{(z^d-1)^2}dz^2,\qquad G_*=z^{k+d}.
 \]
 In particular, $f$ has no umbilics.
 The ends $z=0,\infty$ are ends
 of multiplicity $k$ (embedded if and only if $k=1$) 
 and with pitch $p=-1/2$,
 that is, they are horospherical.
 On the other hand, the other $d$ ends are all
 mutually congruent, and they are embedded snowman-type
 ends with pitch $p=-(2k+d)/(2k+2d)<-1/2$.

 Next, we consider the caustic $C_f$.
 Since $f$ has no umbilics, the ends of
 $C_f$ are the same as those of $f$.
 The ends $z=0,\infty$ of $C_f$ satisfy
 $m_c=k+d/2$ and $n_c=d/2$. So the pitch is
 equal to $p_c=d/(2k+d)$, 
 and thus they are epicycloid-type.  
 The other ends of $C_f$ are mutually congruent.
 Since they are caustics of snowman-type ends,
 they are regular complete
 cylindrical ends (i.e., $p_c=0$).  
 See Figure \ref{fig:nis1andmis4}.  
\end{example}

\appendix
\section{The hyperbolic Gauss maps}
\label{app:boundary}
In this appendix,  we show that the hyperbolic Gauss maps 
defined as limits of the normal geodesic
of the flat front  coincide with $G$ and $G_*$ defined in 
Section~\ref{sec:prelim}.

\smallskip

Let $\Lor^4$ be the Minkowski $4$-space with the Lorentzian metric
$\langle \, , \, \rangle$ and consider the hyperbolic $3$-space $H^3$ 
as the hyperboloid as in \eqref{eq:hyp-lor}.
We denote the cone of future pointing light-like vectors by $L_+$:
\[
   L_+:=\{n=(n_0,n_1,n_2,n_3)\in \Lor^4\,;\,\inner{n}{n}=0,n_0>0\}.
\]
The multiplicative group $\R_+$ acts on $L_+$ by scalar multiplication.
The {\em ideal boundary\/} of $H^3$ is defined as
\begin{equation*}
 \partial H^3:= L_+/\R_+.
\end{equation*}
This is also considered as the {\em asymptotic classes of geodesics}.
In fact, if we denote by $\gamma_{x,\vect{v}}$ the geodesic
starting at $x$ with velocity $\vect{v}$ ($|\vect{v}|=1$), 
\begin{equation}\label{eq:asymptotic}
\begin{aligned}
   \text{the }&\text{asymptotic } \text{class of } \gamma_{x,\vect{v}}(s)
   :=(\cosh s)x+(\sinh s)\vect{v}\\
   &\leftrightarrow
   [x+\vect{v}]\in \partial H^3=L_+/\R_+\\
   &\leftrightarrow
   \frac{1}{(x_0+v_0)-(x_3+v_3)}\bigl((x_1+v_1)+\imag(x_2+v_2)\bigr)
   \in \C\cup\{\infty\}
\end{aligned}
\end{equation}
are  one-to-one correspondences,
where $x=(x_0,x_1,x_2,x_3)$, $\vect{v}=(v_0,v_1,v_2,v_3)$.

Now, identify  $\Lor^4$ with the set of $2\times 2$ hermitian matrices
$\Herm(2)$ as in \eqref{eq:lor-herm}:
\[
   \Lor^4\ni (x_0,x_1,x_2,x_3) \longleftrightarrow
   \begin{pmatrix}
     x_0 + x_ 3 & x_1 + \imag x_2 \\
     x_1 - \imag x_2  & x_0 - x_3
   \end{pmatrix}
   \qquad \left(\imag=\sqrt{-1}\right).
\]
Then the Lorentzian inner product $\inner{~}{~}$ is represented by  
\begin{equation*}
     \inner{X}{Y} = -\frac{1}{2}\trace X\widetilde Y,
\end{equation*}
where $\widetilde Y$ is the cofactor matrix of $Y$,
that is,  $\widetilde Y Y=Y \widetilde Y =(\det{Y}) \id$ holds.
In particular, $\inner{X}{X} = -\det X$.
Here, we can write
\begin{equation}\label{eq:herm-hyp}
  H^3 = \{x\in \Herm(2)\,;\,\det x=1,\trace x>0\}\\
=\{aa^*\,|\,a\in\SL(2,\C)\}.
\end{equation}
We write
\begin{equation}\label{eq:pauli}
   e_0=\id,\quad
   e_1=\begin{pmatrix}
	 0 & 1 \\ 1 & 0 
       \end{pmatrix},\quad
   e_2=\begin{pmatrix}
	 0 & \imag \\ -\imag & 0 
       \end{pmatrix},\quad
   e_3=\begin{pmatrix} 1 & \hphantom{-}0 \\ 0 & -1\end{pmatrix}.
\end{equation}
Let $x\in H^3$ and $X,Y\in T_xH^3$. 
The tangent space $T_xH^3$ is identified with the orthogonal 
compliment of the position vector $x$ in $\Lor^4=\Herm(2)$. 
Then we define a skew-symmetric bilinear form
\begin{equation}\label{eq:exterior}
T_xH^3\times T_xH^3\ni X,Y \mapsto
  X\times Y = \frac{\imag}{2}\bigl(Xx^{-1}Y-Yx^{-1}X\bigr)\in T_xH^3,
\end{equation}
called the {\em exterior product\/},
where $x\in H^3$ is considered as a matrix in $\SL(2,\C)$,
and products of the right-hand side are matrix multiplications.
Then one can show the following:
\begin{itemize}
  \item $X\times Y$ is perpendicular to both $X$ and $Y$.
  \item If $\iota$ is an orientation preserving isometry of $H^3$,
	$\iota_*X\times \iota_*Y=\iota_*(X\times Y)$.
	In particular, $e_1\times e_2=e_3$ holds  for matrices as in
	\eqref{eq:pauli},
	where $e_j$ $(j=1,2,3)$ are considered as vectors in 
	$T_{e_0}H^3$.
  \item $(x,X,Y,X\times Y)$ is a positively oriented basis of
	$\Lor^4$.
\end{itemize}

Let $f\colon{}D^*\to H^3$ be a complete regular end as in
Section~\ref{sec:prelim}, $\nu$ the unit
normal vector field and  $\E_f$ its holomorphic Legendrian lift.
Then we have: 
\begin{proposition}\label{eq:orientation-normal}
At each regular point of $f$,
 \[
     \nu = \sign\bigl(|\hat\theta|^2-|\hat\omega|^2\bigr) 
     \frac{f_u\times f_v}{|f_u\times f_v|}
 \]
  holds.
  That is, $\nu$ is compatible to the orientation of $D^*$
  if and only if $|\hat\theta|^2-|\hat\omega|^2>0$,
  where $z=u+\imag v$ is the complex coordinate and 
  $\omega=\hat\omega\,dz$, $\theta=\hat\theta\,dz$.
\end{proposition}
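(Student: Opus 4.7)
The plan is to compute $f_u \times f_v$ directly from the holomorphic Legendrian representation and compare the result with $\nu = \E_f e_3 \E_f^*$. First I would differentiate $f = \E_f \E_f^*$ using the canonical form equation \eqref{eq:can-form}. Writing $\omega = \hat\omega\,dz$ and $\theta = \hat\theta\,dz$ with $z = u + \imag v$, one obtains
\[
 df = \E_f \begin{pmatrix} 0 & \theta + \bar\omega \\ \omega + \bar\theta & 0 \end{pmatrix} \E_f^*,
\]
from which $f_u$ and $f_v$ can be read off by splitting $dz = du + \imag\,dv$ and $d\bar z = du - \imag\,dv$. Concretely, I set
\[
 a = \hat\theta + \overline{\hat\omega},\quad
 b = \hat\omega + \overline{\hat\theta},\quad
 c = \imag(\hat\theta - \overline{\hat\omega}),\quad
 d = \imag(\hat\omega - \overline{\hat\theta}),
\]
so that $f_u = \E_f \bigl(\begin{smallmatrix} 0 & a \\ b & 0 \end{smallmatrix}\bigr) \E_f^*$ and $f_v = \E_f \bigl(\begin{smallmatrix} 0 & c \\ d & 0 \end{smallmatrix}\bigr) \E_f^*$.

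Next I would apply the exterior product formula \eqref{eq:exterior}. Since $f^{-1} = (\E_f^*)^{-1} \E_f^{-1}$, the inner $\E_f^* (\E_f^*)^{-1} \E_f^{-1} \E_f$ collapses and one is left with a pure matrix computation: the off-diagonal $2\times 2$ matrices multiply to give $\bigl(\begin{smallmatrix} ad & 0 \\ 0 & bc \end{smallmatrix}\bigr)$ and $\bigl(\begin{smallmatrix} bc & 0 \\ 0 & ad \end{smallmatrix}\bigr)$ in the two orders. Thus
\[
 f_u\times f_v = \frac{\imag}{2}(ad-bc)\,\E_f e_3 \E_f^* = \frac{\imag}{2}(ad-bc)\,\nu.
\]
A short expansion gives $ad - bc = 2\imag(|\hat\omega|^2 - |\hat\theta|^2)$, so
\[
 f_u \times f_v = \bigl(|\hat\theta|^2 - |\hat\omega|^2\bigr)\nu.
\]

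Since $\nu$ is a unit vector orthogonal to $df$ and in particular to $f_u,f_v$, taking the Lorentzian norm of both sides yields $|f_u \times f_v| = \bigl||\hat\theta|^2 - |\hat\omega|^2\bigr|$ at any regular point (where the quantity in absolute values is nonzero, by \eqref{fff-sff} since a regular point has $|\rho|\neq 1$). Dividing by $|f_u \times f_v|$ gives the claimed formula with the stated sign.

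The only genuine obstacle is the bookkeeping in the computation of $ad-bc$ and keeping track of the factors of $\imag$; all other ingredients (the form of $df$ from \eqref{eq:can-form}, the cancellation of $\E_f^*(\E_f^*)^{-1}$, and the definition \eqref{eq:exterior} of the exterior product in matrix form) are routine. The orientation statement then follows immediately from the sign of $|\hat\theta|^2 - |\hat\omega|^2$.
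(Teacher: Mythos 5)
Your proposal is correct and follows essentially the same route as the paper: both compute $df=\E_f\bigl(\begin{smallmatrix}0&\theta+\bar\omega\\ \omega+\bar\theta&0\end{smallmatrix}\bigr)\E_f^*$ from \eqref{eq:can-form}, apply the exterior product formula \eqref{eq:exterior} with the cancellation $\E_f^*f^{-1}\E_f=\id$, and arrive at $f_u\times f_v=(|\hat\theta|^2-|\hat\omega|^2)\,\E_fe_3\E_f^*$ (the paper phrases the same computation as $f_u\times f_v=-2\imag\,f_z\times f_{\bar z}$, which is only a cosmetic difference from your $a,b,c,d$ bookkeeping). Your sign arithmetic checks out, and the normalization step using $|\rho|\neq 1$ at regular points is exactly what the paper leaves implicit.
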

\begin{proof}
 By \eqref{eq:can-form}, we have
 \[
    f_z = \E_f\begin{pmatrix}
	      0 & \hat\theta \\
	     \hat\omega & 0 
	   \end{pmatrix}\E_f^*,\qquad
    f_{\bar z} = \E_f\begin{pmatrix}
	      0 & \overline{\hat\omega} \\
	     \overline{\hat\theta} & 0 
	   \end{pmatrix}\E_f^*.
 \]
 Thus, using \eqref{eq:exterior}, we have
 \begin{equation*}
   f_u\times f_v = -2 \imag f_z\times f_{\bar z}
       =
      \left(|\hat\theta|^2-|\hat\omega|^2\right)
               \E_f\begin{pmatrix}1 &\hphantom{-}0 \\ 0 & -1\end{pmatrix}\E_f^*.
   \end{equation*}
   Then we have the conclusion.
\end{proof}

Now, we define
\begin{equation*}
  G_{\pm} = [f\pm\nu]\colon{}D^* \longrightarrow \partial H^3,
\end{equation*}
where $[~~]$ denotes the equivalence class in $\partial H^3=L_+/\R_+$.
\begin{proposition}\label{prop:lightcone-g}
 Let $\E_f=(E_{ij})$ be the holomorphic Legendrian lift of $f$.  Then
 under the identification as in \eqref{eq:asymptotic}, it holds that
 \[
    G_+=G = \frac{E_{11}}{E_{21}},\qquad G_{-}=G_*=\frac{E_{12}}{E_{22}}.
 \]
\end{proposition}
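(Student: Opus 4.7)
The plan is a direct matrix computation starting from the formulas $f = \E_f \E_f^*$ and $\nu = \E_f e_3 \E_f^*$ in \eqref{eq:hol-lift}. Since $e_0 \pm e_3$ equal $\operatorname{diag}(2,0)$ and $\operatorname{diag}(0,2)$, they are rank-one Hermitian matrices, and with $\E_f = (E_{ij})$ we get
\[
  f + \nu = \E_f (e_0 + e_3) \E_f^* = 2 \begin{pmatrix} E_{11} \\ E_{21} \end{pmatrix} \bigl( \overline{E_{11}}, \, \overline{E_{21}} \bigr),
  \qquad
  f - \nu = 2 \begin{pmatrix} E_{12} \\ E_{22} \end{pmatrix} \bigl( \overline{E_{12}}, \, \overline{E_{22}} \bigr).
\]
Both are rank-one and positive semidefinite, so $\det(f \pm \nu) = 0$ and $\trace(f \pm \nu) > 0$; equivalently $\inner{f \pm \nu}{f \pm \nu} = 0$, which shows $f \pm \nu \in L_+$ and that $G_\pm = [f \pm \nu] \in \partial H^3$ is well defined.

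The next step is to read off the image of $[f + \nu]$ under the identification $\partial H^3 = L_+/\R_+ \cong \C \cup \{\infty\}$ in \eqref{eq:asymptotic}. Writing the matrix $f + \nu$ in the coordinates \eqref{eq:lor-herm}, the Minkowski components $(x_0, x_1, x_2, x_3)$ satisfy $x_0 + x_3 = 2|E_{11}|^2$, $x_0 - x_3 = 2|E_{21}|^2$, and $x_1 + \imag x_2 = 2 E_{11} \overline{E_{21}}$. Hence
\[
  [f+\nu] \longleftrightarrow \frac{x_1 + \imag x_2}{x_0 - x_3} = \frac{E_{11} \overline{E_{21}}}{|E_{21}|^2} = \frac{E_{11}}{E_{21}} = G,
\]
and the identical computation applied to $f - \nu$ gives $[f - \nu] \leftrightarrow E_{12}/E_{22} = G_*$.

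The only points requiring care are the following. The identification in \eqref{eq:asymptotic} is formulated for a sum $x + \vect v$ with $x \in H^3$ and $\vect v$ a unit vector, which does apply since $\inner{\nu}{\nu} = 1$; however the same projection $L_+/\R_+ \leftrightarrow \C \cup \{\infty\}$ is valid for any representative of a class in $L_+/\R_+$, so one may equally apply it to the rank-one matrices above. One should also confirm that the labeling $G_+ = G$ (rather than $G_+ = G_*$) is consistent with the orientation of $\nu$ fixed in \eqref{eq:hol-lift}; this is automatic because the same lift $\E_f$ and the same matrix $e_3$ are used to define both $\nu$ and the pair $(G, G_*) = (E_{11}/E_{21}, E_{12}/E_{22})$. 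No substantive obstacle arises: conceptually, the key observation is that $\tfrac12(e_0 \pm e_3)$ are the rank-one orthogonal projections onto the two eigenlines of $e_3$, and these eigenlines are exactly the columns of $\E_f$ whose projective ratios define $G$ and $G_*$.
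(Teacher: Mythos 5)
Your proposal is correct and follows essentially the same route as the paper: both compute $f\pm\nu=\E_f(e_0\pm e_3)\E_f^*$ as rank-one Hermitian matrices and read off the boundary point $\frac{n_1+\imag n_2}{n_0-n_3}$ from the identification \eqref{eq:asymptotic}, yielding $E_{11}/E_{21}$ and $E_{12}/E_{22}$. Your added remarks on lightlike-ness and on the homogeneity of the identification under $\R_+$-scaling are sound but not a change of method.
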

\begin{proof}
 We denote $f+\nu=(n_0,n_1,n_2,n_3)$, and 
 consider $\Lor^4$ as $\Herm(2)$.
 Then  
 \begin{align*}
    \begin{pmatrix}
         n_0 + n_3 & n_1 + \imag n_2 \\
         n_1- \imag n_2 & n_0 -  n_3
    \end{pmatrix} &=
     f+\nu =
     \E_f\E_f^* + \E_fe_3\E_f^* =
     2 \E_f \begin{pmatrix}1 & 0 \\ 0 & 0 \end{pmatrix}\E_f^*\\
    &=
      2\begin{pmatrix}
	  E_{11}\overline{E_{11}} & E_{11}\overline{E_{21}} \\
	  E_{21}\overline{E_{11}} & E_{21}\overline{E_{21}}
       \end{pmatrix}.
 \end{align*}
 By \eqref{eq:asymptotic}, 
 the map
 $[f+\nu]$ is identified with
 \[
      \frac{n_1+ \imag n_2}{n_0-n_3}=
      \frac{E_{11}\overline{E_{21}}}{E_{21}\overline{E_{21}}}
         =\frac{E_{11}}{E_{21}}=G.
 \]

 On the other hand, since
 \[
  f-\nu  = 2  \E_f\begin{pmatrix} 0 & \hphantom{-}0 \\
		    0 &   -1\end{pmatrix} \E_f^*
               =-2\begin{pmatrix}
		   E_{12}\overline{E_{12}}  & E_{12}\overline{E_{22}} \\
		   E_{22}\overline{E_{12}} &  E_{22}\overline{E_{22}}
		  \end{pmatrix},
 \]
 $[f-\nu]$ is identified with
 \[
    \frac{E_{12}\overline{E_{22}}}{E_{22}\overline{E_{22}}}=
    \frac{E_{12}}{E_{22}}=G_*.
 \]
 This concludes the proof.  
\end{proof}

\section{A differential equation of cycloids}\label{B}
We will show that the plane curve 
$\varGamma \colon \vartheta \mapsto r(\vartheta)e^{\imag \vartheta}$ 
determined by a general solution $r$ of 
\begin{align}\label{eq:diffeq-1}
\begin{cases}
  \dfrac{d \log r}{d \vartheta} = u  \\[6pt]
 (p^2-1)\dfrac{du}{d \vartheta}= p^2+ (p^2 + 1)u^2+u^4 \qquad (p>0,~ p\ne 1)
\end{cases}
\end{align}
is a hypo-/epi-cycloid  if $p=n/m$.  

Integrating the second equation of \eqref{eq:diffeq-1}, 
 we have 
\begin{equation}\label{eq:integlated-once}
 \arctan u - \frac{1}{p} \arctan \frac{u}{p}
= \vartheta + C_1.
\end{equation}
Without loss of generality, we take   
the arbitrary constant $C_1$ in \eqref{eq:integlated-once} to be zero, 
because the constant $C_1$ can be cancelled by 
a change of the parameter of the curve $\varGamma$. 
From now on, we let $u=u(\vartheta)$ be the implicit function 
determined by \eqref{eq:integlated-once} with $C_1=0$. 

We introduce a new parameter
$s$($=s(\vartheta)$) by
\begin{equation}\label{eq:newparameter-t}
 s=\vartheta - \arctan u(\vartheta). 
\end{equation} 
Note that $s$ is monotone in $\vartheta$ because  
$ds/d \vartheta = (1+u^2)/(1-p^2) \ne 0$. 

It follows from \eqref{eq:integlated-once} and 
\eqref{eq:newparameter-t} that 
\begin{equation}\label{u-intermsof-t}
 -\frac{1}{p} \arctan \frac{u(\vartheta)}{p} = s, 
\end{equation}
that is, 
\begingroup
\addtocounter{equation}{-1}
\renewcommand{\theequation}{\Alph{section}.$\arabic{equation}'$}
\begin{equation}\label{u-intermsof-t-2}
 u(\vartheta)=-p  \tan (p s). 
\end{equation}
\endgroup
It follows from \eqref{eq:newparameter-t} and 
\eqref{u-intermsof-t-2} that 
\begin{equation}\label{eq:t-theta}
 \vartheta = s - \arctan\bigl(p \tan (ps)\bigr).
\end{equation}

Next, we rewrite the first equation of \eqref{eq:diffeq-1} 
in terms of $s$ instead of $\vartheta$. 
Since the first equation of \eqref{eq:diffeq-1} is 
equivalent to 
\begin{equation}\label{eq:dlogr-dt}
 \frac{d \log r}{ds} \frac{ds}{d \vartheta} = u, \qquad
  \text{i.e., }
 \frac{d \log r}{ds} = u\frac{d \vartheta}{ds},  
\end{equation} 
we first calculate $d \vartheta /ds$. In fact, differentiating 
\eqref{eq:t-theta}, we have
\begin{equation}\label{eq:dtheta-dt}
 \frac{d \vartheta}{ds} = 
\frac{(1-p^2) \cos^2(ps)}{\cos^2(ps)+p^2 \sin^2(ps)}.
\end{equation} 
Substituting \eqref{u-intermsof-t-2} and \eqref{eq:dtheta-dt} into 
\eqref{eq:dlogr-dt}, we have
\begin{equation}
 \frac{d \log r}{ds} = -
\frac{p(1-p^2) \sin (2ps)}{(1+p^2)+(1-p^2) \cos (2ps)}. 
\end{equation}
It implies that 
\begin{equation}\label{eq:r2}
 r^2 = C_2 \{ (1+p^2)+(1-p^2) \cos (2ps) \},
\end{equation}
where $C_2$ is an arbitrary non-zero constant. 

On the other hand, let us also describe $e^{2\imag \vartheta}$ 
in terms of $s$. In fact, it follows from 
\eqref{eq:t-theta} that 
\begin{align*}
  e^{\imag \vartheta} 
 &= e^{\imag s} \exp\bigl(-\imag \arctan(p \tan (ps)) \bigr)\\
 &= e^{\imag s} \{ \cos \left( \arctan(p \tan (ps)) \right) 
         - \imag \sin \left( \arctan (p \tan (ps)) \right) \} \\
 &= e^{\imag s} \left\{ 
       \frac{1}{\sqrt{1+(p \tan (ps))^2}}
      - \imag \frac{p \tan (ps)}{\sqrt{1+(p \tan (ps))^2}}
     \right\}.
\end{align*}
Hence, we have 
\begin{align}\label{eq:e2itheta}
  e^{2 \imag \vartheta} &= e^{2 \imag s} \, 
    \frac{1 -2 \imag p \tan (ps) - p^2 \tan^2 (ps)}
         {1+p^2 \tan^2 (ps)} \\
  &= e^{2 \imag s} \, \frac{(1-p^2)+(1+p^2)\cos 2ps -2\imag p \sin 2ps}%
  {(1+p^2)+(1-p^2) \cos 2ps}. 
 \nonumber
\end{align} 
It follows from \eqref{eq:r2} and \eqref{eq:e2itheta} that 
\begin{align*}
 (r e^{\imag \vartheta})^2 
   &= C_2 e^{2 \imag s} \{ (1-p^2)+(1+p^2)\cos 2ps -2\imag p \sin 2ps \} \\
   &= \frac{C_2}{2} e^{2 \imag s}
          \left\{ 
            (1-p) e^{\imag ps} + (1+p) e^{- \imag ps}
          \right\}^2 . 
\end{align*}
Hence, we can conclude that 
\begin{equation*}
 r e^{\imag \vartheta} 
   = C_3 
   \left\{ 
    (1-p) e^{\imag (1+p)s} + (1+p) e^{ \imag (1-p)s}
   \right\}
\end{equation*}
for arbitrary non-zero constant $C_3$. 

In the case of $p=n/m$, using a new parameter $t=s/m$, we have 
\begin{equation}\label{eq:retheta=cyc}
 \varGamma \colon r e^{\imag \vartheta}= 
  C \left\{ 
     (m-n) e^{\imag (m+n)t} + (m+n) e^{ \imag (m-n)t}
    \right\}
  \left( = C \cdot \Cyc_{m,n}(t) \right)
\end{equation}
for arbitrary non-zero constant $C$. 
The equation \eqref{eq:retheta=cyc} proves 
that the solutions of \eqref{eq:diffeq-1}
 give hypo-/epi-cycloids.


\end{document}